\newtheorem{theorem}{Theorem}
\theoremstyle{plain}
\newtheorem{corollary}{Corollary}
\newtheorem{definition}{Definition}
\newtheorem{example}{Example}
\newtheorem{lemma}{Lemma}
\newtheorem{proposition}{Proposition}
\newtheorem{remark}{Remark}
\numberwithin{equation}{section}
\begin{document}
\begin{center}
\vspace*{1.3cm}

\textbf{FUNCTIONS WITH UNIFORM SUBLEVEL SETS AND\\SCALARIZATION IN LINEAR SPACES}

\bigskip

by

\bigskip

PETRA WEIDNER\footnote{HAWK Hildesheim/Holzminden/G\"ottingen University of Applied Sciences and Arts, Faculty of Natural Sciences and Technology,
D-37085 G\"ottingen, Germany, {petra.weidner@hawk-hhg.de}.}

\bigskip
\bigskip
Research Report \\ 
Version 3 from December 04, 2017\\
Extension of Version 1 from August 12, 2016

\end{center}

\bigskip
\bigskip

\noindent{\small {\textbf{Abstract:}}
Functions with uniform sublevel sets can represent orders, preference relations or other binary relations and thus turn out to be a tool for scalarization that can be used in multicriteria optimization, decision theory, mathematical finance, production theory and operator theory. Sets which are not necessarily convex can be separated by functions with uniform sublevel sets.  
This report focuses on properties of real-valued and extended-real-valued functions with uniform sublevel sets which are defined on a  linear space without assuming topological properties. The functions may be convex or sublinear. They can coincide with a Minkowski functional or with an order unit norm on a subset of the space.
The considered functionals are applied to the scalarization of vector optimization problems. These vector optimization problems refer to arbitrary domination sets. The consideration of such sets is motivated by their relationship to  domination structures in decision making.
}

\bigskip

\noindent{\small {\textbf{Keywords:}} 
uniform sublevel sets; scalarization; separation theorems; vector optimization; production theory; mathematical finance 
}

\bigskip

\noindent{\small {\textbf{Mathematics Subject Classification (2010): }
46A99, 46N10, 90C29, 90B30, 91B99}}

\section{Introduction}

In this paper, we investigate properties of extended-real-valued functions with uniform sublevel sets. These functions are defined on linear spaces, and the uniform sublevel sets can be described by a linear shift of a set into a specified direction.
These functionals turn out to be of the type $\varphi_{A,k}$, which is defined by
\begin{equation}
\varphi_{A,k} (y):= \inf \{t\in
{\mathbb{R}} \mid y\in A+tk\}, \label{first-funcak0}
\end{equation}
where $A$ is a subset of a linear space $Y$ and $k\in Y\setminus\{0\}$.

This formula was introduced by Tammer (formerly Gerstewitz and Gerth) for convex sets $A$ under more restrictive assumptions in the context of vector optimization \cite{ger85}.
Basic properties of $\varphi _{A,k}$ have been proved in \cite{GerWei90} and \cite{Wei90}, later followed by
\cite{GopRiaTamZal:03}, \cite{TamZal10} and \cite{DT}. Many published results in vector optimization and functional analysis are based on \cite{GerWei90} and \cite{Wei90}, where the functional was used in separation theorems for nonconvex sets and applied to scalarization in vector optimization. 

Because of the strong connection to partial orders, which will be pointed out in Section \ref{sec-Monot}, functions of type $\varphi _{A,k}$ have been used in proofs in different fields of mathematics for the construction of sublinear functionals. In these cases, $A$ is a closed pointed convex cone, usually the ordering cone of the space considered, and $k\in -A$. Among the earliest references listed in \cite{Ham12} are \cite{Bon54} and \cite{Kra64}, where the functional was applied in operator theory. $\varphi _{A,k}$ has also been studied in economic theory and finance, e.g. as so-called shortage function by Luenberger \cite{luen92} and for risk measures by Artzner et al. \cite{art99}. 

In distinction to previous results, we will investigate $\varphi_{A,k}$ without any topological assumptions.

Depending on the choice of $A$ and $k$, $\varphi _{A,k}$ can be real-valued or also attain the value $-\infty$.
We will use the symbolic function value $\nu $ (instead of the value $+\infty$ in convex analysis) when extending a functional to the entire space or at points where a function is not feasible otherwise. Thus our approach differs from the classical one in convex analysis in these cases since the functions we are studying are of interest in minimization problems as well as in maximization problems. Consequently, we consider functions that can attain values in $\overline{\mathbb{R}}_{\nu }:=\overline{\mathbb{R}}\cup\{\nu \}$, where $\overline{\mathbb{R}}:=\mathbb{R}\cup\{-\infty, +\infty\}$. $\varphi _{A,k}$ never attains the value $+\infty$ since we define $\operatorname*{sup}\emptyset = \operatorname*{inf}\emptyset =\nu $. Details of functions with values in $\overline{\mathbb{R}}_{\nu }$ are explained in \cite{Wei15}. 
For the application of this approach to $\varphi _{A,k}$ we have to keep in mind the following terms and definitions:
\begin{enumerate}
\item $\operatorname*{inf}\emptyset =\nu\not\in\overline{\mathbb{R}}$
\item $\operatorname*{dom}\varphi _{A,k}=\{y\in Y\mid \varphi _{A,k}(y)\in\mathbb{R}\cup (-\infty )\}$ is the (effective) domain of $\varphi _{A,k}$
\item $\varphi _{A,k}$ is proper iff $\operatorname*{dom}\varphi _{A,k}\not= \emptyset$ and $\varphi _{A,k}(y)\in\mathbb{R}
\mbox{ for all } y\in \operatorname*{dom}\varphi _{A,k}$
\item $\varphi _{A,k}$ is finite-valued iff $\varphi _{A,k}(y)\in\mathbb{R} \mbox{ for all } y\in Y$ 
\end{enumerate}

We will start our investigations in Section \ref{sec-basicdef} with functions for that the sublevel sets are just linear shifts of a set $A$ into direction $k$ and $-k$. These functions turn out to be of type $\varphi_{A,k}$ with $k\in -0^+A\setminus\{0\}$, where $0^+A$ denotes the recession cone of $A$ defined below.
We investigate basic properties of functionals $\varphi_{A,k}$ defined on arbitrary linear spaces. $\varphi_{A,k}$ is finite-valued if $k\in -\operatorname*{core}0^+A$.

We will always try to find conditions that are sufficient and necessary for certain properties of $\varphi _{A,k}$, e.g. for convexity or sublinearity.
Assumptions are often formulated using the recession cone of $A$. We will show that these assumptions are equivalent to usual assumptions in production theory like the free-disposal assumption.
Proposition \ref{p-varphi_rec} connects $\varphi _{A,k}$ with the sublinear function $\varphi _{0^+A,k}$. 
Proposition \ref{t-sep-all} points out the way in that $\varphi _{A,k}$ separates sets.

Section \ref{sec-Monot} deals with the monotonicity of $\varphi _{A,k}$ in the framework of scalarizing binary relations.
Interdependencies between the functions $\varphi _{A,k}$, $\varphi _{A,\lambda k}$, $\varphi _{A+ck,k}$ and $\varphi _{y^0+A,k}$, which are essential for applications, are studied in Section \ref{sec-var-Ak}. 
Section \ref{sec-cx} focuses on convex functions $\varphi _{A,k}$ including statements for sublinear functionals.
There $A$ is assumed to be a convex cone or a shifted convex cone.
We show the relationship between $\varphi _{A,k}$ and the Minkowski functional of $A+k$ and the coincidence of values of order unit norms with values of $\varphi _{A,k}$.

For applying the functionals with uniform sublevel sets to the scalarization in vector optimization, we start in Section \ref{s-decision} by introducing efficient elements in the framework of decision making. Interdependencies between non-dominated elements w.r.t. relations and efficient elements w.r.t. sets are proved. In Section \ref{s-basics-vo}, we define the vector optimization problem and list some basic results for the efficient point set and the weakly efficient point set. Sufficient conditions for efficiency and weak efficiency are given using minimal solutions of scalar-valued functions. Functionals with uniform sublevel sets are applied for a full characterization of the efficient point set and the weakly efficient point set in Section \ref{s-scal-uni}.Consequences for the scalarization by norms are investigated in Section \ref{s-scal-norm}. The statements extend results from \cite{Wei83}, \cite{Wei85}, \cite{GerWei90} and \cite{Wei90}. 

From now on, $\mathbb{R}$ and $\mathbb{N}$ will denote the sets of real numbers and of non-negative integers, respectively.
We define $\mathbb{N}_{>}:=\mathbb{N}\setminus\{0\}$, $\mathbb{R}_{+}:=\{x\in\mathbb{R}\mid x\geq 0\}$, $\mathbb{R}_{>}:=\{x\in\mathbb{R}\mid x > 0\}$,
$\mathbb{R}_{+}^n:=\{(x_1,\ldots ,x_n)\in\mathbb{R}^n\mid x_i\geq 0 \mbox{ for all } i\in\{1,\ldots, n\}\}$ for each $n\in\mathbb{N}_>$.
Linear spaces will always be assumed to be real vector spaces. 
A set $C$ in a linear space $Y$ is a cone iff $\lambda c\in C \mbox{ for all }\lambda\in\mathbb{R}_{+}, c\in C$. The cone $C$ is called non-trivial iff $C\not=\emptyset$, $C\not=\{0\}$ and $C\not= Y$ hold. For a subset $A$ of some linear space $Y$, 
$\operatorname*{core}A$ will denote the algebraic interior of $A$ and $0^+A:=\{u\in Y  \mid  \forall a\in A \; \forall t\in \mathbb{R}_{+}\colon a+tu\in A\ \}$ the recession cone of $A$.
Given two sets $A$, $B$ and some vector $k$ in $Y$, we will use the notation $A\; B:=A \cdot B:=\{a \cdot b\mid a \in A , \; b\in B\}$ and $A\; k:=A \cdot k:=A \cdot \{ k\}$.
For a functional $\varphi$ defined on some space $Y$ and attaining values in $\overline{\mathbb{R}}_{\nu }$, we will denote the epigraph of $\varphi$ by   
$\operatorname*{epi}\varphi$, the effective domain of $\varphi$ by $\operatorname*{dom}\varphi$, and, with respect to some binary relation $\mathcal{R}$ given on $\overline{\mathbb{R}}_{\nu }$, we consider the sets $\operatorname*{lev}_{\varphi,\mathcal{R}}(t):= \{y\in Y \mid \varphi (y) \mathcal{R} t\}$ with $t\in\mathbb{R}$.

Throughout the paper, $Y$ will be a linear space, and we assume $A$ to be a nonempty subset of $Y$ and $k\in Y\setminus\{0\}$. $\mathcal{P}(Y)$ denotes the power set of $Y$.

Beside the properties of functions defined in \cite{Wei15}, we will need the following ones:
\begin{definition}\label{d-mon}
Assume $B\subseteq Y$ and $\varphi: Y \to \overline{\mathbb{R}}_{\nu } $. \\
$\varphi$ is said to be
\begin{itemize}
\item[(a)]
$B$-monotone on $F\subseteq \operatorname*{dom}\varphi$
iff $y^1,y^2 \in F$ and $y^{2}-y^{1}\in B$ imply $\varphi
(y^{1})\le \varphi (y^{2})$,
\item[(b)] strictly $B$-monotone on $F\subseteq \operatorname*{dom}\varphi$ 
iff $y^1,y^2 \in F$ and $y^{2}-y^{1}\in B\setminus
\{0\}$ imply $\varphi (y^{1})<\varphi (y^{2})$,
\item[(c)] $B$-monotone or strictly $B$-monotone iff it is $B$-monotone or strictly $B$-monotone, respectively, on $\operatorname*{dom}\varphi$,
\item[(d)] quasiconvex iff $\operatorname*{dom}\varphi$ is convex and
\[\varphi (\lambda y^1 + (1-\lambda ) y^2) \le \operatorname*{max}(\varphi (y^1),\varphi (y^2))\]
for all $y^1, y^2 \in \operatorname*{dom}\varphi$ and $\lambda \in (0,1)$.
\end{itemize}
\end{definition}

The following lemmata, where the first one is due to \cite{Zal:86a}, will be used in proofs.

\begin{lemma}\label{cor_finite_Zal}
Let $C\subseteq Y$ be a convex cone.
Then $Y=C+\mathbb{R}k$ holds if and only if
\begin{itemize}
\item[(a)] $C$ is a linear subspace of $\,Y$ of codimension $1$ and $k\not\in C$, or
\item[(b)] $\{k,-k\}\cap \operatorname*{core}C\not=\emptyset$. 
\end{itemize}
\end{lemma}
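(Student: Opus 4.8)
The plan is to prove the two implications separately, organizing the harder (necessity) direction around the one\-dimensional trace of $C$ in the direction $k$. Two devices will be used throughout. First, for $y\in Y$ I set $I_y:=\{t\in\mathbb{R}\mid y+tk\in C\}$; since $C$ is convex this is an interval, and the hypothesis $Y=C+\mathbb{R}k$ says exactly that $I_y\neq\emptyset$ for every $y$. Second, because $C$ is a convex cone one checks directly the reformulations $k\in\operatorname*{core}C\iff Y=C-\mathbb{R}_{>}k$ and $-k\in\operatorname*{core}C\iff Y=C+\mathbb{R}_{>}k$, simply by scaling the defining inclusion $k+\delta y\in C$ by $1/\delta>0$. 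These convert statements about the algebraic interior into statements about sums, which is what makes them tractable without any topology.

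For sufficiency I would argue quickly. Under (a), the set $C+\mathbb{R}k$ is a linear subspace strictly containing the codimension\-$1$ subspace $C$ (strictly, since $k\notin C$), hence it is all of $Y$. Under (b), say $k\in\operatorname*{core}C$ (the case $-k\in\operatorname*{core}C$ being symmetric under $k\mapsto-k$): for arbitrary $y$ pick $\delta>0$ with $k+\delta y\in C$, multiply by $1/\delta$ to get $\tfrac1\delta k+y\in C$, and read off $y=(\tfrac1\delta k+y)-\tfrac1\delta k\in C+\mathbb{R}k$. Thus $Y=C+\mathbb{R}k$.

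For necessity I would assume $Y=C+\mathbb{R}k$ and analyse the convex subcone $T:=\{t\in\mathbb{R}\mid tk\in C\}$ of $\mathbb{R}$, which can only equal $\{0\}$, $\mathbb{R}_{+}$, $-\mathbb{R}_{+}$ or $\mathbb{R}$. If $T\neq\{0\}$ then $k\in C$ or $-k\in C$, and the point is that the relevant vector already lies in $\operatorname*{core}C$, giving (b): indeed, if $k\in C$ then $c+tk\in C$ for all $c\in C$ and $t\geq0$ (a convex cone is closed under addition), so for any $z=c+tk$ one has $z+sk\in C$ as soon as $s>\max(0,-t)$; hence $Y=C-\mathbb{R}_{>}k$ and $k\in\operatorname*{core}C$ by the reformulation above, and symmetrically for $-k$. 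The remaining case $T=\{0\}$, i.e.\ $\pm k\notin C$, I expect to force (a): for $t\in I_y$ and $s\in I_{-y}$ the cone property gives $(t+s)k\in C$, so $t+s\in T=\{0\}$ and $s=-t$; since this must hold for every such pair, each $I_y$ is a singleton, say $\{\tau(y)\}$. Adding and scaling the inclusions $y+\tau(y)k\in C$ then shows $\tau$ is a linear functional with $\tau(k)=-1$ and $C=\{y\mid\tau(y)=0\}=\ker\tau$, a codimension\-$1$ subspace avoiding $k$.

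The main obstacle is precisely the case $T=\{0\}$: one must verify the single\-point property of the fibres and, above all, that the resulting $\tau$ is genuinely linear rather than merely additive and positively homogeneous, after which (a) drops out. A secondary subtlety worth flagging is that the sub\-lemma ``$k\in C\Rightarrow k\in\operatorname*{core}C$'' truly needs the full strength of $Y=C+\mathbb{R}k$ (it is false for $k\in C$ alone); the clean split on $T$ is what makes the two outcomes (a) and (b) appear in exactly the right cases, and the rest of the argument is bookkeeping.
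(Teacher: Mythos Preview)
The paper does not actually prove this lemma: immediately before its statement it says ``the following lemmata, where the first one is due to [Z\u{a}linescu, 1986], will be used in proofs,'' and then moves on to prove only Lemma~\ref{cone_gen_noncx}. So there is no in-paper argument to compare against; your task was effectively to supply a proof where the paper chose to cite one.

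Your proposal is correct and complete. The sufficiency direction is routine, and your reformulation $k\in\operatorname*{core}C\iff Y=C-\mathbb{R}_{>}k$ (for $C$ a convex cone) is exactly what the paper itself exploits elsewhere: its Lemma~\ref{cone_gen_noncx} is precisely one implication of your companion statement $-k\in\operatorname*{core}C\Rightarrow Y=C+\mathbb{R}_{>}k$. For necessity, the case split on the one-dimensional trace $T=\{t\in\mathbb{R}\mid tk\in C\}$ is the natural device, and both branches go through as you outlined. In the $T\neq\{0\}$ branch the crucial observation---that $k\in C$ together with $Y=C+\mathbb{R}k$ already forces $k\in\operatorname*{core}C$---is correctly argued via closure of $C$ under addition. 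In the $T=\{0\}$ branch, the fibre argument is sound: nonemptiness of each $I_y$ comes from the hypothesis, singleton-ness from pairing $I_y$ with $I_{-y}$, and linearity of $\tau$ follows because additivity and positive homogeneity are immediate while $\tau(-y)=-\tau(y)$ is precisely the pairing relation you already established. The identification $C=\ker\tau$ and $\tau(k)=-1$ then give (a). No gaps.
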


\begin{lemma}\label{cone_gen_noncx}
Let $C\subseteq Y$ be a cone and $k\in -\operatorname*{core}C$. Then $Y=C+\mathbb{R}_{>}k$. 
\end{lemma}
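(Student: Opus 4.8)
The plan is to prove the set equality by verifying only the nontrivial inclusion $Y \subseteq C + \mathbb{R}_> k$, since the reverse inclusion is immediate ($C \subseteq Y$, $\mathbb{R}_> k \subseteq Y$, and $Y$ is a linear space closed under addition). So I fix an arbitrary $y \in Y$ and search for a representation $y = c + sk$ with $c \in C$ and $s \in \mathbb{R}_>$. Equivalently, it suffices to produce some $s > 0$ with $y - sk \in C$, because then $c := y - sk$ does the job.

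The central idea is to feed the target point $y$ as a \emph{direction} into the definition of the algebraic interior at the point $-k$. Since $k \in -\operatorname*{core}C$, we have $-k \in \operatorname*{core}C$, and the definition of the core yields a $\delta > 0$ such that $-k + \tau y \in C$ for all $\tau \in [0,\delta]$. I then fix any $\tau_0 \in (0,\delta]$, so that $-k + \tau_0 y \in C$ with $\tau_0 > 0$. Exploiting that $C$ is a cone, I multiply this element by the positive scalar $1/\tau_0$ to stay inside $C$, obtaining $-\tfrac{1}{\tau_0}k + y \in C$. Setting $s := 1/\tau_0 > 0$ and $c := y - sk$ gives $c \in C$ and $y = c + sk \in C + \mathbb{R}_> k$. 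As $y \in Y$ was arbitrary, this establishes $Y = C + \mathbb{R}_> k$.

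There is no genuine obstacle here; the conclusion drops out once the direction in the core condition is chosen correctly. The two points that require care are the following. First, one must use the stability of the cone $C$ under \emph{strictly positive} scaling so that the resulting scalar $s = 1/\tau_0$ is strictly positive, which is exactly what membership in $\mathbb{R}_> k$ demands (mere nonnegativity would not suffice). Second, one must invoke the core condition in the direction $+y$, guaranteeing an admissible parameter $\tau_0$ that can be taken strictly positive; this is what converts the algebraic-interior hypothesis into the existence of the coefficient $s > 0$. Note that Lemma \ref{cor_finite_Zal} is not needed for this argument, which refines the cone case of that result.
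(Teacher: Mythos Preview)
Your proof is correct and is essentially the same as the paper's: both take an arbitrary $y\in Y$, apply the core condition (you at $-k\in\operatorname*{core}C$ in direction $y$, the paper at $k\in\operatorname*{core}(-C)$ in direction $-y$) to obtain $\tau_0>0$ with $\tau_0 y-k\in C$, and then scale by $1/\tau_0$ using that $C$ is a cone. The only difference is cosmetic.
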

\begin{proof}
Consider some arbitrary $y\in Y$. $\Rightarrow \exists t\in\mathbb{R}_{>}:\; k+t(-y)\in -C$ since $k\in -\operatorname*{core}C$. $\Rightarrow y\in C+\mathbb{R}_{>}k$ since $C$ is a cone.
\end{proof}
\smallskip

\section{Definition and Basic Properties of Functions with Uniform Sublevel Sets}\label{sec-basicdef}

Scalarization is closely linked to separation. A functional $\varphi$ separates two sets $V$ and $W$ in the space $Y$ if there exists some value $t\in \mathbb{R}$ such that one of the sets is contained in $M:=\{y\in Y \mid \varphi (y) \le t \}$, the other one is contained in $\{y\in Y \mid \varphi (y) \ge t \}$ and $V\cup W\not\subseteq \{y\in Y \mid \varphi (y) = t \}$. 
Disjoint convex sets in a finite-dimensional vector space can be separated by some linear functional $\varphi$. In this case, $M=tk+A$ for some halfspace A and some $k\in Y$. Being interested in nonconvex sets, we use this idea and investigate functionals $\varphi $ that fulfill the condition
\begin{equation} \label{K4}
\forall t\in\mathbb{R}:\quad\varphi(y)\leq t \iff y\in A+tk.
\end{equation}
Here, A is assumed to be an arbitrary subset of Y.

The construction of functions with uniform sublevel sets is based on the following proposition.

\begin{proposition} \label{prop-vor-theo}
Consider a function $\varphi:Y\rightarrow \overline{{\mathbb{R}}}_{\nu }$ with $\operatorname*{dom}\varphi =A+\mathbb{R}k$.\\
If (\ref{K4}) is satisfied, then
\begin{equation}\label{vor-funcak0}
\varphi (y)= \inf \{t\in
{\mathbb{R}} \mid y\in A+tk \} \mbox{ for all }  y\in Y.
\end{equation}
Moreover, (\ref{K4}) is equivalent to
\begin{equation}\label{epi_Ak}
\operatorname*{epi}\varphi =\{(y,t)\in Y\times \mathbb{R} \mid y\in A+tk\}.
\end{equation} 
\end{proposition}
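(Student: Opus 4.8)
The plan is to prove the two assertions separately: the representation \eqref{vor-funcak0} by a direct case distinction on the value of $\varphi(y)$, and the equivalence of \eqref{K4} and \eqref{epi_Ak} by simply unfolding the definition of the epigraph. Throughout I would exploit the standing hypothesis $\operatorname{dom}\varphi = A + \mathbb{R}k$ together with the convention $\inf\emptyset = \nu$ and the fact that $\varphi_{A,k}$ (and hence $\varphi$ here) never takes the value $+\infty$.

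For \eqref{vor-funcak0}, fix $y \in Y$ and set $S(y) := \{t \in \mathbb{R} \mid y \in A + tk\}$, so the right-hand side of \eqref{vor-funcak0} is exactly $\inf S(y)$. Assuming \eqref{K4}, membership $t \in S(y)$ is equivalent to $\varphi(y) \leq t$, whence $S(y) = \{t \in \mathbb{R} \mid \varphi(y) \leq t\}$, and I would verify that the infimum of this set equals $\varphi(y)$ in each of the three admissible cases. If $y \notin \operatorname{dom}\varphi = A + \mathbb{R}k$, then $\varphi(y) = \nu$ while no real $t$ gives $y \in A + tk$, so $S(y) = \emptyset$ and $\inf S(y) = \nu = \varphi(y)$. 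If $\varphi(y) \in \mathbb{R}$, then $S(y) = [\varphi(y), \infty)$ and its infimum is $\varphi(y)$. If $\varphi(y) = -\infty$, then $\varphi(y) \leq t$ holds for every real $t$, so $S(y) = \mathbb{R}$ and $\inf S(y) = -\infty = \varphi(y)$. Since $+\infty$ is excluded, these cases are exhaustive and \eqref{vor-funcak0} follows.

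For the equivalence, I would compare the two sets elementwise. By the definition of the epigraph, a pair $(y,t) \in Y \times \mathbb{R}$ lies in $\operatorname{epi}\varphi$ precisely when $\varphi(y) \leq t$; hence \eqref{epi_Ak} says exactly that $\varphi(y) \leq t \iff y \in A + tk$ for all such $(y,t)$, which is literally \eqref{K4}. Reading this chain of equivalences in both directions yields the two implications at once, so the second part is purely definitional and needs no further hypotheses.

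I do not expect a genuine obstacle here; the argument is bookkeeping. The only point requiring care is the handling of the exceptional value $\nu$: I must use the convention from \cite{Wei15} that $\nu \leq t$ fails for every $t \in \mathbb{R}$, so that points outside $\operatorname{dom}\varphi$ are simultaneously absent from $S(y)$, satisfy $\varphi(y) = \nu$, and are excluded from the epigraph. This single convention is what keeps the infimum identity (through $\inf\emptyset = \nu$) and the set equality \eqref{epi_Ak} consistent at those points, and it is the detail I would state explicitly before running the cases.
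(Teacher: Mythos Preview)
Your proposal is correct and follows essentially the same case-by-case strategy as the paper's proof; your direct identification $S(y)=\{t\in\mathbb{R}\mid \varphi(y)\le t\}$ is in fact a slight streamlining of the paper's argument, which in the real-valued case shows $\inf S(y)\le\varphi(y)$ and then rules out $\inf S(y)<\varphi(y)$ by contradiction. One small caveat: the parenthetical ``$\varphi_{A,k}$ (and hence $\varphi$ here) never takes the value $+\infty$'' is circular, since $\varphi=\varphi_{A,k}$ is precisely what you are proving; the exclusion of $+\infty$ should instead be drawn from the paper's convention that points outside $\operatorname{dom}\varphi$ are assigned $\nu$ rather than $+\infty$ (which is also how the paper's own one-line justification should be read).
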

\begin{proof}
(\ref{K4}) and $\operatorname*{dom}\varphi = A+\mathbb{R}k$ imply $\varphi (y)\not= +\infty\mbox{ for all } y\in Y$.
If $\varphi (y)=-\infty$, then $y\in tk+A\mbox{ for all } t \in {\mathbb{R}}$, thus $\inf \{t\in
{\mathbb{R}} \mid y\in tk + A\}=-\infty$. If $\varphi (y)=t\in {\mathbb{R}}$, then $y\in tk+A$. If (\ref{vor-funcak0}) would not be satisfied, then there would exist some 
$\lambda \in {\mathbb{R}}$ with $\lambda < t$ and 
$y\in \lambda k+A$. This would imply $\varphi (y)\le\lambda < t$, a contradiction. Hence (\ref{vor-funcak0}) holds.
The second statement is obvious.
\end{proof}

Hence each functional with uniform sublevel sets is of the following type.

\begin{definition}\label{d-funcak0}
The function $\varphi _{A,k}:Y\rightarrow \overline{{\mathbb{R}}}_{\nu }$ is defined by
\begin{equation}
\varphi_{A,k} (y):= \inf \{t\in
{\mathbb{R}} \mid y\in A+tk\}. \label{funcak0}
\end{equation}
\end{definition}

One gets an immediate geometric interpretation of $\varphi_{A,k}$ since $A+tk$ is just the set $A$ shifted by $tk$.

We will now investigate basic properties of the functional $\varphi_{A,k}$. Functions with uniform sublevel sets will be characterized as functions of type $\varphi_{A,k}$ that have two additional properties.

\begin{definition}
$A$ is said to be $k$-directionally closed if
\begin{equation}\label{k-dir-clsd}
\forall y\in Y:\quad((\exists (t_n)_{n\in\mathbb{N}}:\; t_n\searrow 0 \mbox{ and } y-t_nk\in A) \Rightarrow y\in A).
\end{equation} 
\end{definition}

\begin{theorem}\label{varphi-theo-allg}
\begin{eqnarray}
\operatorname*{dom}\varphi _{A,k} & = & A+\mathbb{R}k,\label{dom_Ak}\\
\varphi _{A,k} (y+t k) & = & \varphi _{A,k} (y)+t \quad \mbox{ for all } y\in Y,\, t \in\mathbb{R},
\label{f-r255nn}\\
\varphi _{A,k}(y) & \leq & t\quad\mbox{ for all } t \in\mathbb{R},\,y\in A+tk,\label{A_in_leq}\\
\varphi _{A,k}(y) & < & t\quad\mbox{ for all } t \in\mathbb{R},\,y\in \operatorname*{core}A+tk.\label{core_in_less}
\end{eqnarray}
\begin{itemize}
\item[]
\item[(a)]
The condition
\begin{equation}\label{less_in_A}
\operatorname*{lev}\nolimits_{\varphi_{A,k},<}(t)\subseteq A+tk \mbox{ for all } t \in\mathbb{R}
\end{equation}
holds if and only if $k\in -0^+A$.
\item[(b)] If $A$ is $k$-directionally closed, then
\begin{equation}\label{eq_in_A}
\operatorname*{lev}\nolimits_{\varphi_{A,k},=}(t)\subseteq A+tk \mbox{ for all } t \in\mathbb{R}.
\end{equation}
\item[(c)] 
The condition
\begin{equation}\label{K4-ak}
\operatorname*{lev}\nolimits_{\varphi_{A,k},\leq}(t)=A+tk \quad \mbox{ for all } t \in\mathbb{R}
\end{equation}
is fulfilled if and only if the following conditions hold:
\begin{equation}\label{d-sep-func}
k\in -0^+A 
\end{equation}
and $$A \mbox{ is }k\mbox{-directionally closed.}$$
\item[(d)] 
The following conditions are equivalent to each other:
\begin{equation} A-\mathbb{R}_{>}\cdot k\subseteq \operatorname*{core} \;A, \label{in_core} \end{equation} 
\begin{equation}\label{core_in_equal} 
\operatorname*{lev}\nolimits_{\varphi_{A,k},<}(t)= \operatorname*{core}A +tk \quad \mbox{ for all } t \in \mathbb{R}.
\end{equation} 
Condition (\ref{in_core}) implies $\operatorname*{dom}\varphi _{A,k}=\operatorname*{core}\operatorname*{dom}\varphi _{A,k}$.
\item[(e)] If $A\subseteq A_0\subseteq Y$, then
$\operatorname*{dom}\varphi _{A,k}\subseteq \operatorname*{dom}\varphi _{A_0,k}$, and 
\begin{equation*}
\varphi _{A_0,k}(y)\leq \varphi _{A,k}(y)\quad\mbox{ for all } y\in\operatorname*{dom}\varphi _{A,k}.
\end{equation*}
\item[(f)] If $k\in 0^+A$, then $\varphi _{A,k}(y)=-\infty\mbox{ for all } y\in\operatorname*{dom}\varphi _{A,k}$.
\end{itemize}
\end{theorem}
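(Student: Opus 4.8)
The plan is to establish the four displayed identities/inequalities
\eqref{dom_Ak}--\eqref{core_in_less} first, since they are the workhorses for all six parts, and then treat the labelled items (a)--(f) by combining these with the recession-cone and $k$-directional-closedness hypotheses. For \eqref{dom_Ak}, I would unfold the definition \eqref{funcak0}: a point $y$ lies in $\operatorname*{dom}\varphi_{A,k}$ exactly when the set $\{t\in\mathbb{R}\mid y\in A+tk\}$ is nonempty (giving a value in $\mathbb{R}\cup\{-\infty\}$), because an empty infimum is $\nu$ by the paper's convention; nonemptiness of that set is precisely $y\in A+\mathbb{R}k$. For the translation law \eqref{f-r255nn}, I would reindex: $y+tk\in A+sk \iff y\in A+(s-t)k$, so the defining set for $y+tk$ is the $t$-shift of that for $y$, and the infimum shifts by $t$; one must handle the $\pm\infty$ cases, but the reindexing is an order isomorphism of $\mathbb{R}$, so it goes through. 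Inequality \eqref{A_in_leq} is immediate: if $y\in A+tk$ then $t$ belongs to the set whose infimum defines $\varphi_{A,k}(y)$, hence $\varphi_{A,k}(y)\le t$.

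The strict inequality \eqref{core_in_less} is the first place requiring a genuine idea: given $y\in\operatorname*{core}A+tk$, so $y-tk\in\operatorname*{core}A$, I would use the algebraic-interior property along the direction $-k$ to find $\varepsilon>0$ with $y-tk-\varepsilon k\in A$, i.e. $y\in A+(t-\varepsilon)k$; then \eqref{A_in_leq} gives $\varphi_{A,k}(y)\le t-\varepsilon<t$. For part (a), the inclusion \eqref{less_in_A} says every $y$ with $\varphi_{A,k}(y)<t$ already lies in $A+tk$. In the forward direction I would take $y$ with $\varphi_{A,k}(y)<t$, pick by the infimum a value $s<t$ with $y\in A+sk$, and need to push from $s$ up to $t$; this is exactly what $k\in-0^+A$ supplies, since $0^+A$ is closed under adding nonnegative multiples of recession directions and $-k\in 0^+A$ lets me write $A+sk\subseteq A+tk$. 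Conversely, assuming \eqref{less_in_A}, I would fix $a\in A$ and $\lambda>0$ and show $a-\lambda k\in A$ by noting $\varphi_{A,k}(a-\lambda k)\le \varphi_{A,k}(a)-\lambda\le -\lambda<0$ via \eqref{f-r255nn} and \eqref{A_in_leq}, placing $a-\lambda k$ in a strict sublevel set, hence in $A$; this yields $-k\in 0^+A$.

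For part (b) I would take $y$ with $\varphi_{A,k}(y)=t$; the infimum is attained as a limit, so there is a sequence $t_n\searrow t$ with $y\in A+t_n k$, equivalently $y-(t_n-t)k\in A+tk$, and applying $k$-directional closedness \eqref{k-dir-clsd} to the set $A+tk$ (a shift of $A$, hence also $k$-directionally closed) yields $y\in A+tk$. Part (c) is then a clean packaging: \eqref{K4-ak} splits as the $\le$ inclusion $A+tk\subseteq\operatorname*{lev}_{\le}(t)$, which is just \eqref{A_in_leq}, together with the reverse inclusion, which decomposes into the strict part \eqref{less_in_A} (equivalent to $k\in-0^+A$ by (a)) and the equality part \eqref{eq_in_A} (guaranteed by $k$-directional closedness via (b)); for the necessity of both conditions I would run the converse arguments of (a) and a parallel converse for closedness. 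Part (d) characterizes when strict sublevel sets equal $\operatorname*{core}A+tk$: combining \eqref{core_in_less} (giving $\supseteq$) with condition \eqref{in_core}, which forces any point of a strict sublevel set into $\operatorname*{core}A$ after shifting; the final implication about $\operatorname*{dom}\varphi_{A,k}$ being its own core follows because \eqref{in_core} makes $A+\mathbb{R}k$ open in the algebraic sense. Part (e) is pure monotonicity of the infimum under $A\subseteq A_0$ (a larger feasible set can only lower the infimum and can only enlarge the domain), and part (f) follows from \eqref{f-r255nn}: if $k\in0^+A$ then for $y\in\operatorname*{dom}\varphi_{A,k}$ every downward shift $y-nk$ stays feasible, driving $\varphi_{A,k}(y)\le\varphi_{A,k}(y)-n$ for all $n$, hence $\varphi_{A,k}(y)=-\infty$.

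The main obstacle will be the necessity direction in part (c)---extracting $k$-directional closedness of $A$ from the hypothesis \eqref{K4-ak}. The two inclusions of \eqref{K4-ak} are easy to assemble, but reversing them to recover both algebraic conditions requires care: I would argue that if $A$ failed to be $k$-directionally closed there is a witness $y$ and a sequence $t_n\searrow0$ with $y-t_nk\in A$ but $y\notin A$; then $\varphi_{A,k}(y)\le -t_n+\varphi_{A,k}(\cdot)$-type estimates via \eqref{f-r255nn} and \eqref{A_in_leq} force $\varphi_{A,k}(y)\le0$, so $y\in\operatorname*{lev}_{\le}(0)=A$ by \eqref{K4-ak}, contradicting $y\notin A$. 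Keeping the $-\infty$ cases and the empty-infimum convention $\nu$ consistent throughout, especially in \eqref{f-r255nn} and when the defining set is empty, is the bookkeeping that most easily goes wrong.
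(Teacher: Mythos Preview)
Your proposal is correct and follows essentially the same route as the paper's proof: the paper declares \eqref{dom_Ak}--\eqref{core_in_less} and (e) ``obvious'' and then handles (a)--(d), (f) by precisely the infimum/shift/recession arguments you outline (including the same contradiction for the necessity of $k$-directional closedness in (c)). One small omission: in (d) you only sketch the implication \eqref{in_core} $\Rightarrow$ \eqref{core_in_equal}; the paper also verifies the converse by taking $y\in A-\mathbb{R}_{>}k$, noting $\varphi_{A,k}(y)<0$, and reading off $y\in\operatorname*{core}A$ from \eqref{core_in_equal}---but this is routine and fits your plan.
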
 
\begin{proof}
(\ref{dom_Ak}), (\ref{f-r255nn}), (\ref{A_in_leq}), (\ref{core_in_less}) and (e) are obvious.
\begin{itemize}
\item[(a)] First, assume that (\ref{less_in_A}) is fulfilled.
For each $y\in A-tk$ with $t\in\mathbb{R}_>$, we get $\varphi _{A,k}(y)\le -t < 0$ and thus $y \in A$. Hence $k\in -0^+A$.\\
Conversely, assume $k\in -0^+A$.
Consider $y\in Y$ and $t\in\mathbb{R}$ with $\varphi  _{A,k}(y) < t$. Then there exists some $\lambda\in \mathbb{R}_>$ with $y\in A+ (t-\lambda )k=A-\lambda k+tk$. Hence $y\in A+tk$ because of $k\in -0^+A$.
\item[(b)]
Assume that $A$ is $k$-directionally closed, but (\ref{eq_in_A}) is not fulfilled. Then there exist some $y\in Y$ and $t\in\mathbb{R}$ with $\varphi  _{A,k}(y) = t$ and $y\notin A+tk$.
The definition of $\varphi_{A,k}$ implies the existence of a sequence $(t_n)_{n\in\mathbb{N}}$ with $t_n\in\mathbb{R}$, $t_n\searrow 0$ and $y \in A+(t+t_n)k$, i.e., $y-tk-t_nk \in A .$ Hence $y-tk\in A$ since $A$ is $k$-directionally closed, a contradiction to $y\notin A+tk .$
\item[(c)] 
Assume first that (\ref{d-sep-func}) and (\ref{k-dir-clsd}) hold.
We have to prove that $\operatorname*{lev}_{\varphi ,\le}(t) \subseteq t k+A\mbox{ for all } t \in {\mathbb{R}}$.
Consider $y\in Y$ and $t\in\mathbb{R}$ with $\varphi  _{A,k}(y) < t$. Then there exists some $\lambda\in \mathbb{R}_>$ with $y\in A+ (t-\lambda )k=A-\lambda k+tk$. Hence $y\in A+tk$ because of $k\in -0^+A$.
Assume that (\ref{K4-ak}) is not fulfilled. Then there exist some $y\in Y$ and $t\in\mathbb{R}$ with $\varphi  _{A,k}(y) = t$ and $y\notin A+tk$.
The definition of $\varphi_{A,k}$ implies the existence of a sequence $(t_n)_{n\in\mathbb{N}}$ with $t_n\in\mathbb{R}$, $t_n\searrow 0$ and $y \in A+(t+t_n)k$, i.e., $y-tk-t_nk \in A .$ Hence $y-tk\in A$ since $A$ is $k$-directionally closed, a contradiction to $y\notin A+tk .$
Thus (\ref{K4-ak}) is fulfilled.\\
Assume now (\ref{K4-ak}). Then $y\in A-tk$, $t\in\mathbb{R}_{+}$, results in $\varphi(y)\le -t \le 0$ and thus in $y \in A$. Hence (\ref{d-sep-func}) is satisfied.\\
If $y\in Y$ and $y-t_nk\in A$ for some sequence $(t_n)_{n\in\mathbb{N}}$ with $t_n\searrow 0$, then $\varphi (y)\leq t_n\mbox{ for all } n\in\mathbb{N}$ and thus $\varphi (y)\leq 0$, which implies $y\in A$. Thus $A$ is $k$-directionally closed.
\item[(d)] 
Assume that (\ref{in_core}) holds. Let $t \in
{\mathbb{R}}$ and $y\in Y$ be such that $\varphi_{A,k} (y)<t $.
Then there exists some $\lambda \in {\mathbb{R}}$, $\lambda < t $, with $y\in
\lambda k+A$. It follows that $y\in \lambda k+A=t k+(A-(t
- \lambda )k)\subseteq t k+\operatorname*{core}A$. This results, together with (\ref{core_in_less}), in 
(\ref{core_in_equal}).\\
Let us now assume that (\ref{core_in_equal}) is satisfied. Consider some $y\in A-\mathbb{R}_{>}\cdot k$. $\Rightarrow \varphi _{A,k}(y)< 0$. This implies $y\in \operatorname*{core}A$ by (\ref{core_in_equal}). Thus (\ref{in_core}) is fulfilled. \\
If (\ref{in_core}) holds, then $\mathbb{R} k+A\subseteq \mathbb{R} k+\operatorname*{core}A\subseteq \operatorname*{core}(\mathbb{R} k+A)$, i.e.
$\operatorname*{dom}\varphi _{A,k} \subseteq\operatorname*{core}\operatorname*{dom}\varphi _{A,k}$.
\item[(f)] The assertion follows from: $tk+A=(t-\lambda )k+A+\lambda k \subseteq (t-\lambda )k+A\quad\mbox{ for all } t\in\mathbb{R}$.
\end{itemize}
\end{proof}

\begin{remark}
Property (\ref{f-r255nn}) is called translation invariance and plays an important role in several proofs as well as for applications in risk theory. It was shown for $\varphi _{A,k}$ in \cite{GopTamZal:00}. Hamel \cite[Proposition 2]{Ham12}  pointed out that each translation-invariant functional $\varphi :Y\to\overline{\mathbb{R}}$ fulfills property (\ref{K4}) with $A=\{y\in Y\mid \varphi(y)\leq 0\}$. He investigated the relationship between translation invariance and the conditions (\ref{vor-funcak0}), (\ref{d-sep-func}) and (\ref{k-dir-clsd}), where his definition of $k$-directional closedness is different from that used in this paper.
\end{remark}

Let us add some statements related to the conditions mentioned in Theorem \ref{varphi-theo-allg}.

\begin{lemma}\label{l-dir-clsd-suff}
Assume that $A$ is algebraically closed and $k\in -0^+A\setminus\{0\}$.
Then $A$ is $k$-directionally closed.
\end{lemma}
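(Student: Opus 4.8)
The plan is to take an arbitrary $y\in Y$ for which there is a sequence $(t_n)_{n\in\mathbb{N}}$ with $t_n\searrow 0$ and $y-t_nk\in A$ for all $n$, and to deduce $y\in A$ directly from the definition of algebraic closedness. Recall that $k\in -0^+A$ means precisely $-k\in 0^+A$, i.e. $b-rk\in A$ for every $b\in A$ and every $r\in\mathbb{R}_{+}$; thus $A$ is stable under subtracting nonnegative multiples of $k$. The difficulty is that the hypothesis supplies only countably many points $y-t_nk$ of $A$ on the ray $\{y-sk\mid s>0\}$, whereas algebraic closedness is a statement about half-open segments. So the first move will be to upgrade this discrete data to a full segment.

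Concretely, I would first show that the entire half-open segment $\{y-sk\mid s\in(0,t_1]\}$ lies in $A$, where $t_1$ denotes the first (hence, since $t_n\searrow 0$, the largest) term of the sequence. Given $s\in(0,t_1]$, since $t_n\to 0$ I can choose $n$ with $t_n\le s$; then $s-t_n\ge 0$ and
\[
y-sk=(y-t_nk)-(s-t_n)k\in A
\]
because $y-t_nk\in A$ and $-k\in 0^+A$. This is exactly the step where the recession hypothesis $k\in -0^+A$ is essential: it interpolates between the sequence points and converts the countable condition into a continuum. I expect this segment-filling step to be the main (indeed the only nonroutine) obstacle in the argument.

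Setting $a:=y-t_1k\in A$, one checks that $\{a+\sigma(y-a)\mid\sigma\in[0,1)\}=\{y-sk\mid s\in(0,t_1]\}$, which I have just shown to be contained in $A$. Hence $y$ is the endpoint of a half-open segment that starts at the point $a\in A$ and lies entirely in $A$, so $y$ belongs to the algebraic closure of $A$. Since $A$ is algebraically closed, this forces $y\in A$, which is precisely what $k$-directional closedness requires; the assumption $k\neq 0$ serves only to guarantee that $k$ specifies a genuine direction and is not otherwise used.
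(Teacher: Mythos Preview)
Your proof is correct and follows essentially the same route as the paper's. Both arguments hinge on the same key step: using $-k\in 0^+A$ together with the sequence $(t_n)$ to show that the entire half-open segment $\{y-sk\mid s\in(0,t_1]\}$ lies in $A$, after which algebraic closedness forces $y\in A$. The only cosmetic difference is that the paper argues by contradiction (assuming $y\notin A$, extracting a point $y-(1-\lambda)t_1k\notin A$ on the segment, and then contradicting this via the same recession-cone interpolation), whereas you give the direct version.
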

\begin{proof}
Suppose that (\ref{k-dir-clsd}) is not fulfilled. Then there exists some $y\in Y$ and some sequence $(t_n)_{n\in\mathbb{N}}$ with $t_n\searrow 0$ and $y-t_nk\in A$, but $y\notin A$.
Since $A$ is algebraically closed, we get: $\exists\lambda\in (0,1):\,y-t_1k+\lambda (y-(y-t_1k))\notin A$, i.e., $y-t_1k+\lambda t_1k=y-(1-\lambda )t_1k\notin A$. There exists some $j\in \mathbb{N}$ with $t_j<(1-\lambda )t_1$. Since $y-t_jk\in A$, we get $y-(1-\lambda )t_1k= y-t_jk+(t_j-(1-\lambda )t_1)k\in A$ by $k\in -0^+A$, a contradiction.
\end{proof}

The assumption $k\in -0^+A$ can be formulated in different ways.

\begin{proposition}\label{H1-alternat}
Suppose that $A$ is a proper subset of $\,Y$.\\
The following conditions are equivalent to each other  for $A$ and $k\in Y\setminus\{0\}$.
\begin{itemize}
\item[(a)] $k\in -0^+A$.
\item[(b)] $A=H-C$ for some proper subset $H$ of $\,Y$ and some convex cone $C\subset Y$ with $k\in C$.
\item[(c)] $A=A-C$ for some non-trivial convex cone $C\subset Y$ with $k\in C$.
\item[(d)] $A=A-C$ for some non-trivial cone $C\subset Y$ with $k\in C$.
\end{itemize}
\end{proposition}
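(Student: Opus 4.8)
The plan is to treat condition (a), $k\in -0^+A$, as the hub and connect every other condition to it, exploiting that by the definition of the recession cone $k\in -0^+A$ is equivalent to $A-\mathbb{R}_{+}k\subseteq A$, hence (taking $t=0$) to the identity $A-\mathbb{R}_{+}k=A$. The concrete scheme I would carry out is (a)$\Rightarrow$(c)$\Rightarrow$(d)$\Rightarrow$(a) together with (c)$\Rightarrow$(b)$\Rightarrow$(a); the first cycle gives (a)$\Leftrightarrow$(c)$\Leftrightarrow$(d), and the two extra arrows attach (b) to the group, so all four conditions become equivalent.

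For the constructive directions leading out of (a), the single useful object is the ray $C:=\mathbb{R}_{+}k$. It is a convex cone containing $k$, and it is non-trivial: it is nonempty, differs from $\{0\}$ because $k\neq 0$, and differs from $Y$ because $-k\notin\mathbb{R}_{+}k$. Since (a) supplies $A-\mathbb{R}_{+}k=A$, setting $C:=\mathbb{R}_{+}k$ yields $A=A-C$, which is exactly (c). The implication (c)$\Rightarrow$(d) is then immediate, a convex cone being in particular a cone and the non-triviality requirement being identical in both statements. For (c)$\Rightarrow$(b) I would simply take $H:=A$, which is a proper subset of $Y$ by the standing hypothesis of the proposition, so that $A=A-C=H-C$ with the convex cone $C$ furnished by (c).

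The two implications returning to (a) are short computations, and they are the only place where care is needed. For (d)$\Rightarrow$(a): given $A=A-C$ with $C$ a cone and $k\in C$, for any $a\in A$ and $t\in\mathbb{R}_{+}$ the cone property gives $tk\in C$, whence $a-tk\in A-C=A$; thus $A-\mathbb{R}_{+}k\subseteq A$ and $k\in -0^+A$. For (b)$\Rightarrow$(a): writing $a=h-c$ with $h\in H$, $c\in C$ and taking $t\in\mathbb{R}_{+}$, I obtain $a-tk=h-(c+tk)$, and here the crucial point is that convexity of $C$ must be used to guarantee $c+tk\in C$ (a convex cone is closed under addition, since $c+tk=2\cdot\tfrac12(c+tk)$ with $\tfrac12(c+tk)\in C$), so that $a-tk\in H-C=A$ and again $k\in -0^+A$. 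The main conceptual obstacle, and the reason the proposition carries both (c) and (d), is precisely this asymmetry: in (d) only the scalar multiples $tk$ of the fixed vector $k$ need to lie in $C$, so mere conehood suffices, whereas in (b) an arbitrary element $c$ of $C$ must be combined with $tk$, which forces the use of convexity. I would also flag that (b) relaxes $A=A-C$ to $A=H-C$ with arbitrary proper $H$, so the return argument must be written without assuming $h\in A$.
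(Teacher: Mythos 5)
Your proof is correct, and it differs from the paper's in two mild but genuine ways. The paper runs the single cycle (a)$\Rightarrow$(b)$\Rightarrow$(c)$\Rightarrow$(d)$\Rightarrow$(a), and its constructive step (a)$\Rightarrow$(b) takes $H=A$ and $C=-0^+A$, i.e.\ the \emph{largest} admissible witness cone: $A=A+0^+A=H-C$ holds because $0\in 0^+A$ and $A+0^+A\subseteq A$, and $-0^+A$ is automatically a proper convex cone containing $k$ (it cannot be all of $Y$ since $A$ is proper). You instead exit (a) via the \emph{smallest} witness, the ray $C=\mathbb{R}_{+}k$, landing directly in (c), and then attach (b) by the trivial choice $H=A$; your implication graph is (a)$\Rightarrow$(c)$\Rightarrow$(d)$\Rightarrow$(a) plus (c)$\Rightarrow$(b)$\Rightarrow$(a). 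The paper's choice makes (a)$\Rightarrow$(b) a one-line appeal to the definition of the recession cone and gets (b)$\Rightarrow$(c) from $H-C-C=H-C$; yours keeps every witness as concrete as possible and isolates cleanly the one place where convexity of $C$ is genuinely needed, namely (b)$\Rightarrow$(a), where $c+tk\in C$ requires $C+C\subseteq C$ rather than mere conehood --- a point the paper's proof passes over silently because its return arrow (d)$\Rightarrow$(a) only ever needs $tk\in C$. All of your verifications (non-triviality of $\mathbb{R}_{+}k$, the additivity of a convex cone, the warning that $h$ need not lie in $A$ in (b)) are accurate; either route is complete.
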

\begin{proof}
(a) implies (b) with $H=A$ and $C=-0^+A$. (b) implies (c) since $A-C= H-C-C=H-C=A$.
(c) yields (d). (d) implies (a) because of $C\subseteq -0^+A$.
\end{proof}

\begin{remark}
One of the basic assumptions in production theory is the free-disposal assumption $A=A-C$, where $C$ is the ordering cone.
\end{remark}

\begin{proposition}\label{p-k-recc}
Assume $k\in -0^+A$.
\begin{itemize}
\item[(a)] $\operatorname*{dom}\varphi _{A,k}=A+\mathbb{R}_>k$.
\item[(b)] $\varphi  _{A,k} (y)=-\infty \iff y+\mathbb{R}k\subseteq A$.
\item[(c)] $\varphi  _{A,k} $ is finite-valued on $\operatorname*{dom}\varphi  _{A,k} \setminus A$. 
\end{itemize}
\end{proposition}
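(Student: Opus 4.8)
The plan is to derive all three parts from the facts already collected in Theorem \ref{varphi-theo-allg}, using throughout that the hypothesis $k\in -0^+A$ means precisely $a-sk\in A$ for every $a\in A$ and every $s\in\mathbb{R}_+$.

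For (a) I would start from $\operatorname*{dom}\varphi_{A,k}=A+\mathbb{R}k$, which is (\ref{dom_Ak}), and show $A+\mathbb{R}k=A+\mathbb{R}_>k$. The inclusion $A+\mathbb{R}_>k\subseteq A+\mathbb{R}k$ is trivial. For the reverse, take $y=a+tk$ with $a\in A$ and $t\in\mathbb{R}$, and choose $s\in\mathbb{R}_>$ with $s>-t$. Since $s>0$ and $-k\in 0^+A$, we have $a-sk\in A$, while $t+s>0$; hence $y=(a-sk)+(t+s)k\in A+\mathbb{R}_>k$. This gives $A+\mathbb{R}k\subseteq A+\mathbb{R}_>k$ and thus equality.

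For (b) the decisive tool is part (a) of Theorem \ref{varphi-theo-allg}: because $k\in -0^+A$, condition (\ref{less_in_A}) holds, i.e. $\operatorname*{lev}_{\varphi_{A,k},<}(t)\subseteq A+tk$ for every $t\in\mathbb{R}$. If $\varphi_{A,k}(y)=-\infty$, then $\varphi_{A,k}(y)<t$ for all $t$, so $y\in A+tk$, that is $y-tk\in A$, for every $t\in\mathbb{R}$; since $\{y-tk\mid t\in\mathbb{R}\}=y+\mathbb{R}k$, this yields $y+\mathbb{R}k\subseteq A$. The converse is immediate from the definition of $\varphi_{A,k}$: if $y+\mathbb{R}k\subseteq A$, then $y-tk\in A$ for every real $t$, so $\{t\in\mathbb{R}\mid y\in A+tk\}=\mathbb{R}$ and its infimum is $-\infty$.

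Part (c) then follows at once from (b). For $y\in\operatorname*{dom}\varphi_{A,k}$ the value $\varphi_{A,k}(y)$ lies in $\mathbb{R}\cup\{-\infty\}$ (it is never $+\infty$ and never $\nu$ on the domain); and if it were $-\infty$, then (b) would force $y+\mathbb{R}k\subseteq A$, in particular $y\in A$. Hence every $y\in\operatorname*{dom}\varphi_{A,k}\setminus A$ satisfies $\varphi_{A,k}(y)\in\mathbb{R}$. I do not anticipate a genuine obstacle here, since each part reduces to a previously established fact; the one point that needs the hypothesis $k\in -0^+A$ in an essential way is the step in (b) where (\ref{less_in_A}) is invoked, and recognizing that (\ref{less_in_A}) is exactly what converts ``$\varphi_{A,k}(y)<t$ for all $t$'' into membership in $A$ for all shifts is the key idea on which (b), and therefore (c), rests.
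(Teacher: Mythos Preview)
Your proof is correct and follows essentially the same line as the paper's. The only minor difference is in (b): the paper gives a direct, self-contained argument (pick $\lambda<-t$ with $y\in A+\lambda k$ and use $k\in -0^+A$ to conclude $y+tk\in A$), whereas you recognise that this step is exactly the content of (\ref{less_in_A}) from Theorem~\ref{varphi-theo-allg}(a) and simply cite it---a slightly more economical packaging of the same idea.
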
 
{\it Proof}
\begin{itemize}
\item[(a)] We get for each $t\in\mathbb{R}$ with $t\leq 0$: \quad$A+tk=A+k+(t-1)k\subseteq A+k.$
\item[(b)] Consider $y\in \operatorname*{dom}\varphi  _{A,k}$ with $\varphi  _{A,k} (y)=-\infty$ and $t\in\mathbb{R}.$
$\Rightarrow\exists\lambda <-t:\;y\in\lambda k+A.$ $\Rightarrow \lambda +t<0$ and $y-\lambda k\in A.$
$\Rightarrow y+tk=(y-\lambda k)+(\lambda + t)k\in A+ 0^+A\subseteq A$.\\
The reverse direction of the equivalence is obvious.
\item[(c)] (b) implies for $y\in Y$ with $\varphi  _{A,k} (y)=-\infty$: $\,y=y+0\cdot k\in A$.
\qed
\end{itemize}

\begin{theorem}\label{theo-k-core}
Assume that $A$ is a proper subset of $\,Y$ and  $k\in -\operatorname*{core}0^+A.$\\
Then $\varphi  _{A,k}$ is finite-valued and 
(\ref{core_in_equal}) holds.
\end{theorem}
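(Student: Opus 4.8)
The plan is to split the claim into its two assertions---finite-valuedness and the identity (\ref{core_in_equal})---and to reduce each to results already available. Throughout I write $C:=0^+A$, which is a cone, and I note that $k\in-\operatorname*{core}C$ means $-k\in\operatorname*{core}C\subseteq C$, so in particular $k\in-0^+A$; hence Proposition \ref{p-k-recc} is at my disposal.

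First I would dispose of the value $+\infty$. Since $\operatorname*{dom}\varphi_{A,k}=A+\mathbb{R}k$ by (\ref{dom_Ak}), it suffices to show $A+\mathbb{R}k=Y$. Lemma \ref{cone_gen_noncx}, applied to the cone $C=0^+A$, gives $Y=0^+A+\mathbb{R}_>k$. Fixing any $a_0\in A$ (possible as $A\neq\emptyset$) and using $a_0+0^+A\subseteq A$, I obtain $Y=a_0+0^+A+\mathbb{R}_>k\subseteq A+\mathbb{R}k$, so $\operatorname*{dom}\varphi_{A,k}=Y$ and $\varphi_{A,k}$ never attains $+\infty$.

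Next I would rule out the value $-\infty$; this is the step where properness of $A$ and the strength of the core assumption really enter, and I expect it to be the main obstacle. By Proposition \ref{p-k-recc}(b), $\varphi_{A,k}(y)=-\infty$ is equivalent to $y+\mathbb{R}k\subseteq A$, so I must show the latter never happens. Assuming $y_0+\mathbb{R}k\subseteq A$, I would derive $A=Y$, contradicting that $A$ is proper. Given an arbitrary $z\in Y$, I use $-k\in\operatorname*{core}0^+A$ in the direction $z-y_0$ to produce $t>0$ with $u:=-k+t(z-y_0)\in 0^+A$; then $a:=y_0+\tfrac1t k$ lies in $A$ (it is on the line $y_0+\mathbb{R}k$), and $a+\tfrac1t u\in A$ by the recession-cone property. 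A direct computation gives $a+\tfrac1t u=z$, whence $z\in A$. Thus $A=Y$, the desired contradiction, and $\varphi_{A,k}$ is finite-valued.

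Finally, for (\ref{core_in_equal}) I would invoke the equivalence in Theorem \ref{varphi-theo-allg}(d) and instead establish (\ref{in_core}), i.e. $A-\mathbb{R}_>k\subseteq\operatorname*{core}A$. Fix $a\in A$ and $s>0$; to see $a-sk\in\operatorname*{core}A$ I take an arbitrary direction $d\in Y$ and use $-k\in\operatorname*{core}0^+A$ to find $\varepsilon>0$ with $-k+\mu d\in 0^+A$ for all $\mu\in[0,\varepsilon]$. Scaling by $s$ (legitimate since $0^+A$ is a cone) yields $-sk+\lambda d\in 0^+A$ for $\lambda\in[0,s\varepsilon]$, and adding $a\in A$ gives $a-sk+\lambda d\in A$ on that whole segment. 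As $d$ was arbitrary, $a-sk\in\operatorname*{core}A$, which is exactly (\ref{in_core}); Theorem \ref{varphi-theo-allg}(d) then delivers (\ref{core_in_equal}). The only points needing care are the elementary inclusion $a+0^+A\subseteq A$ and the scaling of the core segment, both of which are routine.
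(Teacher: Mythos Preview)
Your proof is correct and follows the same overall strategy as the paper: show $\operatorname*{dom}\varphi_{A,k}=Y$, rule out the value $-\infty$ by contradiction, and deduce (\ref{core_in_equal}) by verifying (\ref{in_core}) and appealing to Theorem~\ref{varphi-theo-allg}(d). The execution differs only in minor details: the paper invokes Lemma~\ref{cor_finite_Zal} rather than Lemma~\ref{cone_gen_noncx} for the domain step, compresses the $-\infty$ argument into the single line $Y=0^+A+\mathbb{R}k+y\subseteq 0^+A+A\subseteq A$, and obtains (\ref{in_core}) from the inclusion $A+\operatorname*{core}0^+A\subseteq\operatorname*{core}(A+0^+A)\subseteq\operatorname*{core}A$ instead of unfolding the definition of the algebraic interior as you do---but these are stylistic variations of the same argument.
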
 
\begin{proof}
By Lemma \ref{cor_finite_Zal}, $Y=0^+A+\mathbb{R}k$, thus $Y=A+0^+A+\mathbb{R}k \subseteq A+\mathbb{R}k=\operatorname*{dom}\varphi _{A, k}$.\
Suppose now that $\varphi  _{A,k}$ is not finite-valued. Then there exists some $y\in Y$ with $y+\mathbb{R}k \subseteq A$, which implies $Y=0^+A+\mathbb{R}k=0^+A+\mathbb{R}k+y\subseteq 0^+A+A\subseteq A$, a contradiction.\\
Since $A+\operatorname*{core}0^+A\subseteq \operatorname*{core}(A+0^+A) \subseteq\operatorname*{core}A$, (\ref{core_in_equal}) follows from Theorem \ref{varphi-theo-allg}.
\end{proof}

\begin{theorem} \label{t-K4-cons}
Assume $A$ is a proper subset of $\,Y$. 
\begin{itemize}
\item[(I)] 
\begin{itemize}
\item[(a)] If $A$ is convex, then $\varphi  _{A,k} $ is convex.
\item[(b)] If $A$ is a cone, then $\varphi  _{A,k} $ is positively homogeneous.
\item[(c)] If $A+A\subseteq A$, then $\varphi  _{A,k} $ is subadditive.
\item[(d)] If $A$ is a convex cone, then $\varphi  _{A,k} $ is sublinear.
\end{itemize}
\item[(II)] 
Suppose that $A$ is $k$-directionally closed and that $k\in -0^+A\setminus\{0\}$ holds.\\
Then each of the sufficient conditions given in (a)-(d) is also necessary.
Moreover, $\varphi  _{A,k} $ is quasiconvex if and only if $\varphi  _{A,k} $ is convex.
\end{itemize}
\end{theorem}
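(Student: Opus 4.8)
The plan is to handle part (I) directly from the definition $\varphi_{A,k}(y)=\inf\{t\in\mathbb{R}\mid y\in A+tk\}$, and part (II) by recovering $A$ as a sublevel set of $\varphi_{A,k}$. For the convexity claim (I)(a), given $y^1,y^2\in\operatorname*{dom}\varphi_{A,k}$ and $\lambda\in(0,1)$, I would choose for each $i$ a scalar $s_i$ with $y^i\in A+s_ik$ and $s_i$ arbitrarily close to (and above) $\varphi_{A,k}(y^i)$; convexity of $A$ gives $\lambda A+(1-\lambda)A\subseteq A$, hence $\lambda y^1+(1-\lambda)y^2\in A+(\lambda s_1+(1-\lambda)s_2)k$ and $\varphi_{A,k}(\lambda y^1+(1-\lambda)y^2)\le\lambda s_1+(1-\lambda)s_2$; letting $s_i\searrow\varphi_{A,k}(y^i)$ yields the convexity inequality, while $\operatorname*{dom}\varphi_{A,k}=A+\mathbb{R}k$ is convex. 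Positive homogeneity when $A$ is a cone follows from $\varphi_{A,k}(\alpha y)=\inf\{t\mid y\in\tfrac1\alpha A+\tfrac t\alpha k\}$ together with $\tfrac1\alpha A=A$ and the substitution $t=\alpha t'$ for $\alpha>0$. Subadditivity when $A+A\subseteq A$ comes from $y^1\in A+s_1k$, $y^2\in A+s_2k$ giving $y^1+y^2\in(A+A)+(s_1+s_2)k\subseteq A+(s_1+s_2)k$. Then (I)(d) requires no new computation: a convex cone is a cone with $A+A\subseteq A$, so $\varphi_{A,k}$ is positively homogeneous and subadditive, i.e.\ sublinear. Throughout part (I) the only delicate point is the value $-\infty$, which I would treat by a separate case sending the relevant $s_i\to-\infty$ to confirm the output is again $-\infty$.

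For part (II), the pivotal observation is that the standing hypotheses of (II)---$A$ is $k$-directionally closed and $k\in-0^+A\setminus\{0\}$---are exactly the conditions under which Theorem \ref{varphi-theo-allg}(c) guarantees (\ref{K4-ak}), namely $\operatorname*{lev}_{\varphi_{A,k},\le}(t)=A+tk$ for all $t\in\mathbb{R}$. Specializing to $t=0$ gives $A=\{y\in Y\mid\varphi_{A,k}(y)\le 0\}$, so $A$ is precisely the zero-sublevel set of $\varphi_{A,k}$. Necessity of each condition then reduces to the fact that this sublevel set inherits the structural property of the function: if $\varphi_{A,k}$ is convex, its sublevel set $A$ is convex; if $\varphi_{A,k}$ is subadditive, then $a^1,a^2\in A$ force $\varphi_{A,k}(a^1+a^2)\le\varphi_{A,k}(a^1)+\varphi_{A,k}(a^2)\le 0$, so $a^1+a^2\in A$; if $\varphi_{A,k}$ is positively homogeneous, then $a\in A$ and $\lambda>0$ give $\varphi_{A,k}(\lambda a)=\lambda\varphi_{A,k}(a)\le 0$, and positive homogeneity also forces $\varphi_{A,k}(0)\le 0$ whence $0\in A$, so $A$ is a cone; and (d) follows by combining the convex and cone conclusions.

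Finally, for the quasiconvexity equivalence, convexity always implies quasiconvexity, so only the converse uses the hypotheses of (II). Here I would again invoke $A=\operatorname*{lev}_{\varphi_{A,k},\le}(0)$: quasiconvexity makes every sublevel set convex, so $A$ is convex, and part (I)(a) then returns that $\varphi_{A,k}$ is convex. I expect the main obstacle to be administrative rather than conceptual: part (I) demands clean bookkeeping of the $-\infty$ values and of the non-attainment of the infimum, while the entire force of part (II) is concentrated in correctly citing Theorem \ref{varphi-theo-allg}(c) to obtain the identity $A=\operatorname*{lev}_{\varphi_{A,k},\le}(0)$---once that is in hand, each necessity statement and the quasiconvex-to-convex implication are one-line consequences.
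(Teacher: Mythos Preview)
Your proposal is correct and follows essentially the same line as the paper. For part (I) the paper packages each argument as a statement about $\operatorname{epi}\varphi_{A,k}$ (convex, a cone, closed under addition), while you phrase them directly in terms of the infimum defining $\varphi_{A,k}$; these are the same computations. For part (II) the paper invokes Proposition~\ref{prop-vor-theo} together with (\ref{K4-ak}) to identify $\operatorname{epi}\varphi_{A,k}=\{(y,t):y\in A+tk\}$ and reads the necessity and the quasiconvex--convex equivalence off that identity, whereas you specialize to $t=0$ and use $A=\operatorname*{lev}_{\varphi_{A,k},\le}(0)$; again this is the same mechanism, just projected to the zero level.
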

{\it Proof}
\begin{itemize}
\item[(I)] 
\begin{itemize}
\item[(a)] Take $(y^{1},t_1)$, $(y^{2},t_2)\in \operatorname*{epi}\varphi  _{A,k} $, $\lambda\in[0,1]$. 
For each $i\in\{ 1,2\}$, there exists a sequence
$(t_{in})_{n\in\mathbb{N}}$ of real numbers which converges to $\varphi_{A,k}(y^i)$ with $t_{in}\geq\varphi _{A,k}(y^i)$ and
$y^i\in A+t_{in}k$ for all $n\in\mathbb{N}$.
The convexity of $A$ yields $\lambda y^{1}+(1-\lambda )y^{2}\in A+(\lambda t_{1n}+(1-\lambda) t_{2n})k$ for all $n\in\mathbb{N}$.
Thus $\varphi  _{A,k}(\lambda y^{1}+(1-\lambda )y^{2})\leq \lambda \varphi _{A,k}(y^1)+(1-\lambda) \varphi _{A,k}(y^2)\leq \lambda t_1+(1-\lambda) t_2$.
Hence $\operatorname*{epi}\varphi  _{A,k} $ is convex, i.e., $\varphi  _{A,k} $ is convex.
\item[(b)] 
Take $(y,t)\in \operatorname*{epi}\varphi  _{A,k} $ 
and $\lambda\in\mathbb{R}_{+}$. 
There exists a sequence
$(t_{n})_{n\in\mathbb{N}}$ of real numbers which converges to $\varphi_{A,k}(y)$ with $t_{n}\geq\varphi _{A,k}(y)$ and
$y\in A+t_{n}k$ for all $n\in\mathbb{N}$.
Then $\lambda y\in \lambda A+\lambda t_nk \subseteq A+\lambda t_nk$ for all $n\in\mathbb{N}$. Thus $\varphi  _{A,k}(\lambda y)\leq \lambda\varphi _{A,k}(y)\leq \lambda t$.
Hence $\operatorname*{epi}\varphi  _{A,k} $ is a cone, i.e., $\varphi  _{A,k} $ is positively homogeneous.
\item[(c)] 
Take $(y^{1},t_1),(y^{2},t_2)\in \operatorname*{epi}\varphi  _{A,k} $. 
For each $i\in\{ 1,2\}$, there exists a sequence
$(t_{in})_{n\in\mathbb{N}}$ of real numbers which converges to $\varphi_{A,k}(y^i)$ with $t_{in}\geq\varphi _{A,k}(y^i)$ and
$y^i\in A+t_{in}k$ for all $n\in\mathbb{N}$.
Since $A+A\subseteq A$, $y^{1}+y^{2}\in A+(t_{1n}+t_{2n})k$ for all $n\in\mathbb{N}$.
Thus $\varphi  _{A,k}(y^{1}+y^{2})\leq t_{1n}+t_{2n}$ for all $n\in\mathbb{N}$.
This implies $\varphi  _{A,k}(y^{1}+y^{2})\leq\varphi _{A,k}(y^1)+\varphi _{A,k}(y^2)\leq t_1+t_2$.
Consequently, $\operatorname*{epi}\varphi  _{A,k} +\operatorname*{epi}\varphi  _{A,k} \subseteq\operatorname*{epi}\varphi  _{A,k} $, i.e., $\varphi  _{A,k}$ is subadditive.
\item[(d)] follows from (a) and (b) since a functional is sublinear if and only if it is convex and positively homogeneous.
\end{itemize}
\item[(II)] 
The assumptions imply (\ref{K4-ak}) and
$\operatorname*{epi}\varphi _{A,k} =\{(y,t)\in Y\times \mathbb{R}  : y\in A+tk\}$ by Proposition \ref{prop-vor-theo}.  $\varphi _{A,k}$ is quasiconvex if and only if all sets 
$\operatorname*{lev}\nolimits_{\varphi_{A,k},\leq}(t)$ with $t \in\mathbb{R}$
are convex (see \cite{Wei15}).
This results in the statements. 
\qed
\end{itemize}

The values of $\varphi _{A,k}$ are connected with the values of the sublinear functional $\varphi _{0^+A,k}$.

\begin{proposition}\label{p-varphi_rec}
\begin{itemize}
\item[]
\item[(a)] For $y^0\in A+\mathbb{R}k$ and $y^1\in 0^+A+\mathbb{R}k$, we get
$y^0+y^1\in A+\mathbb{R}k$ and
\begin{equation*}
\varphi_{A,k}(y^0+y^1)\leq \varphi_{A,k}(y^0)+\varphi_{0^+A,k}(y^1).
\end{equation*}
\item[(b)] If $A$ is a proper subset of $Y$ and $k\in -\operatorname*{core}0^+A$, then $\varphi _{A,k}$ and $\varphi _{0^+A, k}$ are finite-valued and
\begin{equation}
\varphi_{A,k}(y^0)-\varphi_{A,k}(y^1)\leq \varphi_{0^+A,k}(y^0-y^1) \mbox{ for all } y^0,y^1\in Y.\label{vor_sublin_dom}
\end{equation}
\item[(c)] If $A$ is $k$-directionally closed, then $0^+A$ is $k$-directionally closed.
\end{itemize}
\end{proposition}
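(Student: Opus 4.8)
The plan is to take the three parts in order, reducing (b) to (a). For part (a) I would argue directly from the defining infimum (\ref{funcak0}). Writing $y^0=a+sk$ and $y^1=u+rk$ with $a\in A$, $u\in 0^+A$, $s,r\in\mathbb{R}$, the defining property of the recession cone gives $a+u\in A+0^+A\subseteq A$, so $y^0+y^1=(a+u)+(s+r)k\in A+\mathbb{R}k$, which settles the membership claim. For the inequality I would fix arbitrary reals $s_0>\varphi_{A,k}(y^0)$ and $s_1>\varphi_{0^+A,k}(y^1)$; since $y^0\in\operatorname*{dom}\varphi_{A,k}$ and $y^1\in\operatorname*{dom}\varphi_{0^+A,k}$, the infima are attained in the extended reals (not the value $\nu$), so there are $t_0<s_0$ and $t_1<s_1$ with $y^0\in A+t_0k$ and $y^1\in 0^+A+t_1k$. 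Then $y^0+y^1\in A+0^+A+(t_0+t_1)k\subseteq A+(t_0+t_1)k$, whence $\varphi_{A,k}(y^0+y^1)\le t_0+t_1<s_0+s_1$. Letting $s_0\downarrow\varphi_{A,k}(y^0)$ and $s_1\downarrow\varphi_{0^+A,k}(y^1)$ yields the bound, the argument being insensitive to whether the right-hand values are finite or $-\infty$.

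For part (b), finite-valuedness of $\varphi_{A,k}$ is exactly Theorem \ref{theo-k-core}. To obtain finite-valuedness of $\varphi_{0^+A,k}$ I would apply that same theorem to the set $0^+A$: as $0^+A$ is a convex cone one has $0^+(0^+A)=0^+A$, so the hypothesis $k\in-\operatorname*{core}0^+A$ reads $k\in-\operatorname*{core}0^+(0^+A)$; and $0^+A$ is a proper subset of $Y$, since $0^+A=Y$ would force $a+y\in A$ for every $a\in A$ and $y\in Y$, i.e. $A=Y$, contradicting properness of $A$. Theorem \ref{theo-k-core} then gives that $\varphi_{0^+A,k}$ is finite-valued. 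For (\ref{vor_sublin_dom}) I would apply part (a) to the decomposition $y^0=y^1+(y^0-y^1)$, noting $y^1\in A+\mathbb{R}k=\operatorname*{dom}\varphi_{A,k}$ and $y^0-y^1\in Y=0^+A+\mathbb{R}k=\operatorname*{dom}\varphi_{0^+A,k}$ by finite-valuedness; part (a) gives $\varphi_{A,k}(y^0)\le\varphi_{A,k}(y^1)+\varphi_{0^+A,k}(y^0-y^1)$, and since every term is a real number the subtraction of $\varphi_{A,k}(y^1)$ is legitimate and produces (\ref{vor_sublin_dom}).

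For part (c) I would verify (\ref{k-dir-clsd}) for $0^+A$ directly. Suppose $y\in Y$ and $t_n\searrow 0$ with $y-t_nk\in 0^+A$; I must show $y\in 0^+A$, that is $a+sy\in A$ for every $a\in A$ and $s\in\mathbb{R}_+$. The case $s=0$ is immediate, so fix $s>0$ and set $b:=a+sy$. From $y-t_nk\in 0^+A$ we get $a+s(y-t_nk)=b-(s t_n)k\in A$ for all $n$, while $s t_n\searrow 0$; since $A$ is $k$-directionally closed this yields $b=a+sy\in A$, completing the verification.

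The main obstacle is modest and lies in part (b): one must recognize that the estimate is not proved afresh but is precisely the translation-invariant subadditivity over the recession cone already furnished by part (a), and one must secure the two auxiliary facts about $0^+A$ (its properness and the identity $0^+(0^+A)=0^+A$) so that Theorem \ref{theo-k-core} applies to $0^+A$. Once finiteness of both functionals is in hand, the rearrangement into (\ref{vor_sublin_dom}) is routine.
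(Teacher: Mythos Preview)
Your proof is correct and follows essentially the same route as the paper: part~(a) via the defining infimum and the inclusion $A+0^+A\subseteq A$, part~(b) by reducing to part~(a) after invoking Theorem~\ref{theo-k-core} for both $A$ and $0^+A$, and part~(c) by rescaling and applying $k$-directional closedness of $A$. The only cosmetic differences are that the paper phrases the infimum argument in~(a) with approximating sequences rather than arbitrary upper bounds, and in~(c) it passes through the cone property $\lambda(y-t_nk)\in 0^+A$ before adding $a$, whereas you apply the recession-cone definition with parameter $s$ directly; neither difference is substantive.
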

{\it Proof}
\begin{itemize}
\item[(a)] 
Consider $y^0\in A+\mathbb{R}k, y^1\in 0^+A+\mathbb{R}k$.
Then $y^0+y^1\in (A+0^+A)+\mathbb{R}k$. Thus $y^0+y^1\in A+\mathbb{R}k$.
There exists a sequence $(t_n)_{n\in\mathbb{N}}$ of real numbers with $y^0\in A+t_nk$ which converges to 
$\varphi_{A,k}(y^0)$. Furthermore, there exists a sequence $(s_n)_{n\in\mathbb{N}}$ of real numbers with $y^1\in 0^+A+s_nk$ which converges to 
$\varphi_{0^+A,k}(y^1)$. For all $n\in\mathbb{N}$, $y^0+y^1\in (A+t_nk)+(0^+A+s_nk)\subseteq A+(t_n+s_n)k$\\
and thus $\varphi_{A,k}(y^0+y^1)\leq t_n+s_n$ by (\ref{A_in_leq}). Hence\\
$\varphi_{A,k}(y^0+y^1)\leq \operatorname*{lim}_{n\to +\infty}(t_n+s_n)=\varphi_{A,k}(y^0)+\varphi_{0^+A,k}(y^1)$.
\item[(b)] The assumptions imply that $0^+A$ is a proper subset of $Y$. The considered functionals are finite-valued by Theorem \ref{theo-k-core}. (\ref{vor_sublin_dom}) results from (a). 
\item[(c)] Assume that $A$ is $k$-directionally closed and that we have some $y\in Y$ for that there exists some sequence $(t_n)_{n\in\mathbb{N}}$ of real numbers with
$t_n\searrow 0$ and $y-t_nk\in 0^+A$ for all $n\in\mathbb{N}$.
Consider arbitrary elements $a\in A$ and $\lambda\in\mathbb{R}_>$.
Since $0^+A$ is a cone, $\lambda y-t_n\lambda k\in 0^+A\mbox{ for all } n\in\mathbb{N}$. Hence $a+\lambda y-t_n\lambda k\in A\mbox{ for all } n\in\mathbb{N}$.
Since $A$ is $k$-directionally closed and $\lambda t_n\searrow 0$, we get $a+\lambda y\in A$. Thus $y\in 0^+A$.
Hence $0^+A$ is $k$-directionally closed.
\qed
\end{itemize}

\smallskip
The functional $\varphi_{A,k}$ has been constructed in such a way that it can be used for the separation of not necessarily convex sets.

\begin{proposition}\label{t-sep-all}
Assume $D$ to be a nonempty subset of $Y$.
\begin{itemize}
\item[(1)] $(\forall d\in D\colon\varphi_{A,k}(d)\not< 0)\implies \operatorname*{core} A\cap D=\emptyset$.
\item[(2)] If $A-\mathbb{R}_{>}\cdot k\subseteq \operatorname*{core}A$, then :\\
$\operatorname*{core} A\cap D=\emptyset\iff (\forall d\in D\colon\varphi_{A,k}(d)\not< 0) $.
\item[(3)] If $k\in -0^+A\setminus\{0\}$ and $A$ is $k$-directionally closed, then:\\
$A\cap D=\emptyset\iff  (\forall d\in D\colon\varphi_{A,k}(d)\not\leq 0)$.
\end{itemize}
\end{proposition}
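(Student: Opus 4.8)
The plan is to reduce all three assertions to the sublevel-set descriptions already established in Theorem \ref{varphi-theo-allg}, specialized to the level $t=0$. The common thread is that under the respective hypotheses the sets $\operatorname*{core}A$ and $A$ arise precisely as the strict and the weak $0$-sublevel sets of $\varphi_{A,k}$, so each separation statement becomes essentially tautological once the correct level-set identity is invoked.

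For part (1), I would argue by contraposition, using no extra assumptions on $A$ or $k$. If $\operatorname*{core}A\cap D\neq\emptyset$, I pick $d$ in the intersection; since $d\in\operatorname*{core}A=\operatorname*{core}A+0\cdot k$, property (\ref{core_in_less}) with $t=0$ gives $\varphi_{A,k}(d)<0$, contradicting the hypothesis $\forall d\in D\colon\varphi_{A,k}(d)\not<0$. Hence $\operatorname*{core}A\cap D=\emptyset$.

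For part (2), the implication ``$\Leftarrow$'' is exactly statement (1), so only ``$\Rightarrow$'' needs work. Here the extra hypothesis (\ref{in_core}), namely $A-\mathbb{R}_{>}\cdot k\subseteq\operatorname*{core}A$, activates Theorem \ref{varphi-theo-allg}(d), which yields (\ref{core_in_equal}): $\operatorname*{lev}\nolimits_{\varphi_{A,k},<}(t)=\operatorname*{core}A+tk$ for every $t\in\mathbb{R}$. Setting $t=0$ identifies $\{y\mid\varphi_{A,k}(y)<0\}$ with $\operatorname*{core}A$. Thus if some $d\in D$ had $\varphi_{A,k}(d)<0$, it would lie in $\operatorname*{core}A\cap D$, contradicting $\operatorname*{core}A\cap D=\emptyset$; this establishes the contrapositive of ``$\Rightarrow$''.

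For part (3), the hypotheses $k\in-0^+A\setminus\{0\}$ and $k$-directional closedness of $A$ are exactly those of Theorem \ref{varphi-theo-allg}(c), so (\ref{K4-ak}) holds: $\operatorname*{lev}\nolimits_{\varphi_{A,k},\leq}(t)=A+tk$ for all $t\in\mathbb{R}$. At $t=0$ this reads $A=\{y\mid\varphi_{A,k}(y)\leq 0\}$, i.e. $d\in A\iff\varphi_{A,k}(d)\leq 0$. Consequently $A\cap D=\emptyset$ is equivalent to $\forall d\in D\colon\varphi_{A,k}(d)\not\leq 0$, which is the claim. I do not anticipate a genuine obstacle: the substantive work has already been carried out in Theorem \ref{varphi-theo-allg}, and the only care required is to pair each separation statement with the hypothesis that unlocks the right level-set identity and then to specialize cleanly to $t=0$.
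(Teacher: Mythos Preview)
Your proposal is correct and follows essentially the same approach as the paper: each part is reduced to the appropriate level-set identity from Theorem~\ref{varphi-theo-allg} (namely (\ref{core_in_less}) for part~(1), (\ref{core_in_equal}) for part~(2), and (\ref{K4-ak}) for part~(3)) specialized to $t=0$. The paper's proof is simply a one-line pointer to those same references, so your write-up is a faithful and more detailed rendering of the intended argument.
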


\begin{proof}
(1) follows from $\varphi_{A,k}(a) < 0 \mbox{ for all } a\in \operatorname*{core} A$, (2) from (\ref{core_in_equal}),
(3) from (\ref{K4-ak}).
\end{proof}

Since we use $\nu$ as function value outside the effective domain, $\not\leq$ and $\not<$ can only be replaced by $>$ and $\geq$, respectively, if $Y=A+{\mathbb{R}}k$.
\smallskip

\section{Representation of Binary Relations by Functions and Monotonicity}\label{sec-Monot}

Binary relations, especially partial orders, can structure a space or express preferences in decision making and  optimization. Thus the presentation of such relations by real-valued functions serves as a useful tool in proofs, e.g. in operator theory \cite{Kra64}, but also as a basis for scalarization methods in vector optimization \cite{Wei90} and for the development of risk measures in mathematical finance \cite{art99}.

If $C$ is an algebraically closed ordering cone in $Y$, then the corresponding order $\leq _{C}$ can be presented by $\varphi_{-C,k}$ with an arbitrary $k\in C\setminus\{0\}$ since, for all $y^1,y^2\in Y$,
\begin{displaymath}
y^1\leq _{C} y^2\iff \varphi_{-C,k}(y^1-y^2)\leq 0.
\end{displaymath}
Since $\varphi_{-C,k}$ is $C$-monotone, we get for all $y^1,y^2\in \operatorname*{dom}\varphi_{-C,k}$:
\begin{equation}\label{mon-not-reverse}
y^1\leq _{C} y^2\implies \varphi_{-C,k}(y^1)\leq \varphi_{-C,k}(y^2).
\end{equation}

More generally, if a binary relation can be described by some proper algebraically closed subset $A$ of $Y$ with $0^+A\not= \{ 0\}$ as
$\mathcal{R}_A=\{(y^1,y^2)\in Y\times Y\mid y^2-y^1\in A\}$,  
then we have for each $k\in 0^+A\setminus \{ 0\}$ and all $y^1, y^2\in Y$:
\begin{displaymath}
y^1\mathcal{R}_A y^2\iff \varphi_{-A,k}(y^1-y^2)\leq 0.
\end{displaymath}
If the function $\varphi_{-A,k}$ is $A$-monotone, this implies for all $y^1,y^2\in \operatorname*{dom}\varphi_{-A,k}$:
\begin{displaymath}
y^1\mathcal{R}_A y^2 \implies \varphi_{-A,k}(y^1)\leq \varphi_{-A,k}(y^2).
\end{displaymath}

The reverse implication is not true since it is already not true for (\ref{mon-not-reverse}).
\begin{example}
Consider $Y=\mathbb{R}^2$, $C=\mathbb{R}^2_{+}$ and $k=(1,1)^T$. Then $\operatorname*{dom}\varphi_{-C,k}=Y$.
For $y^1=(-1,-1)^T$ and $y^2=(-2,0)^T$, we get
$\varphi_{-C,k}(y^1)=-1\leq 0=\varphi_{-C,k}(y^2)$, but $y^1\leq _{C} y^2$ does not hold since $y^2-y^1=(-1,1)^T\not\in C$.
\end{example}

But we get the following local presentation of $\mathcal{R}_A$. One has for all $y^1, y^2\in Y$:
\begin{displaymath}
y^1\mathcal{R}_A y^2\iff \varphi_{y^2-A,k}(y^1)\leq 0.
\end{displaymath}

\begin{theorem}\label{t251M}
Assume $B\subseteq Y$.
\begin{itemize}
\item[(1)] $A-B\subseteq A\implies\varphi _{A,k}$ is $B$-monotone.
\item[(2)] If $k\in -0^+A$, $A$ is $k$-directionally closed and $A-B\subseteq A+\mathbb{R}k$, then:\\
$\varphi _{A,k} $ is $B${-monotone} $\iff \,A-B\subseteq A$.
\item[(3)] If $A$ is $k$-directionally closed and $\varphi _{A,k}$ is finite-valued on $F\subseteq Y$, then:\\
$A - (B\setminus \{0\}) \subseteq \operatorname{core} A \implies \varphi _{A,k} $ is strictly $B$-monotone on $F$.
\item[(4)] If $A-\mathbb{R}_{>} k\subseteq \operatorname*{core} \;A$ and $A-B\subseteq A+\mathbb{R}k$, then:\\
$\varphi _{A,k} $ is strictly $B${-monotone} $\implies \,A - (B\setminus \{0\}) \subseteq \operatorname{core} A$.
\item[(5)] Suppose that $\varphi _{A,k}$ is finite-valued on $F\subseteq Y$. Then:\\
$A-B\subseteq A\implies\varphi _{A,k} $ is strictly ($\operatorname*{core}B$)-monotone on $F$.
\end{itemize}
\end{theorem}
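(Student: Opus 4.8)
The plan is to handle all five assertions with one mechanism — translate the defining inequality of $\varphi_{A,k}$ by the increment $b := y^2-y^1\in B$ — and to extract strictness from the ``core'' refinements already recorded in Theorem \ref{varphi-theo-allg}. For the sufficiency claim (1), I would start from the definition: given $y^1,y^2\in\operatorname*{dom}\varphi_{A,k}$ with $y^2-y^1=b\in B$, take any $t\in\mathbb{R}$ with $y^2\in A+tk$; subtracting $b$ gives $y^1=y^2-b\in(A-b)+tk\subseteq A+tk$ because $A-B\subseteq A$, whence $\varphi_{A,k}(y^1)\le t$ by (\ref{A_in_leq}). Passing to the infimum over such $t$ yields $\varphi_{A,k}(y^1)\le\varphi_{A,k}(y^2)$, so $\varphi_{A,k}$ is $B$-monotone.

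Part (2) has its easy direction $\Leftarrow$ supplied by (1). For the converse I would exploit that, under $k\in-0^+A$ and $k$-directional closedness, condition (\ref{K4-ak}) holds by Theorem \ref{varphi-theo-allg}(c), so membership in $A$ is exactly the condition $\varphi_{A,k}(\cdot)\le 0$. Given $a\in A$ and $b\in B$, I set $y^2:=a$ and $y^1:=a-b$; the hypothesis $A-B\subseteq A+\mathbb{R}k=\operatorname*{dom}\varphi_{A,k}$ is precisely what guarantees that $y^1$ lies in the domain so that $B$-monotonicity is applicable. It then gives $\varphi_{A,k}(a-b)\le\varphi_{A,k}(a)\le 0$, hence $a-b\in A$ by (\ref{K4-ak}), and so $A-B\subseteq A$.

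For the strict statements (3) and (4), strictness must be produced by extra room in the $k$-direction. In (3) I would put $t:=\varphi_{A,k}(y^2)$, which is finite on $F$; $k$-directional closedness yields $y^2-tk\in A$ through (\ref{eq_in_A}), so $y^1-tk=(y^2-tk)-b\in A-(B\setminus\{0\})\subseteq\operatorname*{core}A$, and (\ref{core_in_less}) upgrades this to the strict inequality $\varphi_{A,k}(y^1)<t=\varphi_{A,k}(y^2)$. Part (4) is the necessity counterpart: invoking the equivalence of (\ref{in_core}) and (\ref{core_in_equal}) from Theorem \ref{varphi-theo-allg}(d), I test once more with $y^2=a\in A$ and $y^1=a-b$, note $\varphi_{A,k}(a)\le 0$ by (\ref{A_in_leq}), and read $\varphi_{A,k}(a-b)<0$ via (\ref{core_in_equal}) at $t=0$ to land in $\operatorname*{core}A$; the domain hypothesis $A-B\subseteq A+\mathbb{R}k$ again secures the applicability of strict monotonicity.

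The step I expect to be the main obstacle is (5), where strictness must be manufactured without any closedness or core assumption on $A$. The idea is to use that $b\in\operatorname*{core}B$ absorbs a genuine shift in the $k$-direction: choosing $\varepsilon>0$ with $b-\varepsilon k\in B$ (algebraic interiority of $B$ in the direction $-k$), I would apply the already proven $B$-monotonicity of (1) to the pair $y^1,\,y^2-\varepsilon k$, whose difference $b-\varepsilon k$ lies in $B$, and then invoke translation invariance (\ref{f-r255nn}) to obtain $\varphi_{A,k}(y^1)\le\varphi_{A,k}(y^2-\varepsilon k)=\varphi_{A,k}(y^2)-\varepsilon<\varphi_{A,k}(y^2)$. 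Confirming that the algebraic interior indeed furnishes such an $\varepsilon$, and keeping every test point inside $\operatorname*{dom}\varphi_{A,k}$ (using finiteness on $F$ and translation invariance) so that the monotonicity definitions are legitimately invoked, are the points demanding the most care.
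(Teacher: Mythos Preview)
Your proposal is correct and follows essentially the same route as the paper: each part tests the pair $(y^1,y^2)=(a-b,a)$ or translates the defining inequality by $b=y^2-y^1$, using (\ref{K4-ak}), (\ref{eq_in_A}), (\ref{core_in_less}) and (\ref{core_in_equal}) exactly where the paper does. The only cosmetic difference is in (5), where you recycle part (1) together with translation invariance (\ref{f-r255nn}) instead of rerunning the sequence argument; this is a slightly cleaner packaging of the same idea (perturb $b\in\operatorname*{core}B$ by $-\varepsilon k$ to stay in $B$ and harvest the strict $\varepsilon$-gap).
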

{\it Proof}
\begin{itemize}
\item[(1)] Suppose $A - B\subseteq A$. Take $y^1,y^2 \in \operatorname*{dom}\varphi_{A,k}$ with $y^{2}-y^{1}\in B$. There exists a sequence
$(t_n)_{n\in\mathbb{N}}$ that converges to $\varphi_{A,k}(y^2)$ such that $y^2\in t_nk+A \mbox{ for all } n\in\mathbb{N}$.
$\Rightarrow y^{1}\in y^{2}-B\subseteq t_nk + (A - B)\subseteq t_nk+A \mbox{ for all } n\in\mathbb{N}$. $\Rightarrow \varphi_{A,k} (y^{1})\le t_n \mbox{ for all } n\in\mathbb{N}$.
Thus $\varphi_{A,k} (y^{1})\le \varphi_{A,k}(y^2)$. Hence $\varphi_{A,k} $ is $B$-monotone.
\item[(2)] Assume now \,$A-B\subseteq{\mathbb{R}}k+A=\operatorname*{dom}\varphi _{A,k}$ and that $\varphi_{A,k} $ is $B$-monotone.
Consider $a\in A$ and $b\in B$. Then $\varphi_{A,k} (a)\le 0$. Since
$a-(a-b)=b\in B$ and $a-b\in \operatorname*{dom}\varphi _{A,k}$, we obtain that $\varphi_{A,k} (a-b)\le \varphi_{A,k}
(a)\le 0$, thus $a-b\in A$ by (\ref{K4-ak}). Consequently, $A - B\subseteq A$. 
\item[(3)] Suppose $A - (B\setminus \{0\}) \subseteq \operatorname{core} A$. Take $y^1,y^2 \in F$ with $y^{2}-y^{1}\in B\setminus \{0\}$. $t:=\varphi_{A,k}(y^2)\in \mathbb{R}$. Then $y^{2}\in A+tk$ by Theorem \ref{varphi-theo-allg}(b). This implies $y^{1}\in y^{2}-(B\setminus \{0\}) \subseteq (A - (B\setminus \{0\}))+tk\subseteq \operatorname{core} A+tk$. 
By (\ref{core_in_less}), we get $\varphi_{A,k}(y^{1})<t=\varphi_{A,k}(y^{2})$. Consequently, $\varphi_{A,k}$ is strictly $B$-monotone on $F$.
\item[(4)] Assume now that $A-\mathbb{R}_{>} k\subseteq \operatorname*{core}A$, $A-B\subseteq {\mathbb{R}}k+A=\operatorname*{dom}\varphi _{A,k}$ and that $\varphi_{A,k} $ is strictly $B$-monotone.
Take $a\in A$ and $b\in B\setminus \{0\}$. Then $\varphi_{A,k} (a) \le 0$. Since
$a-(a-b)=b\in B\setminus \{0\}$ and $a-b\in \operatorname*{dom}\varphi _{A,k}$, we obtain that $\varphi_{A,k} (a-b) < \varphi_{A,k}(a)\le 0$, thus $a-b\in \operatorname*{core} A$ by (\ref{core_in_equal}). Consequently, $A-(B\setminus \{0\})\subseteq \operatorname*{core}A$.
\item[(5)] Take $y^1,y^2 \in F$ with $b^1:=y^{2}-y^{1}\in \operatorname*{core}B$. 
By the definition of the algebraic interior, there exists some $\tilde{t}\in\mathbb{R}_>$ with $b^2:=b^1-\tilde{t}k\in B$.
Hence $y^1=y^2-b^1=y^2-b^2-\tilde{t}k$.
There exists a sequence
$(t_n)_{n\in\mathbb{N}}$ of real numbers which converges to $\varphi_{A,k}(y^2)$ with $y^2\in A+t_nk$ and $t_n\geq \varphi_{A,k}(y^2)$ for all $n\in\mathbb{N}$.
Then,  for all $n\in\mathbb{N}$, $y^1\in A+t_nk-b^2-\tilde{t}k\subseteq A+(-\tilde{t}+t_n)k$ since $A - B \subseteq A$. Hence $\varphi_{A,k}(y^1)\leq -\tilde{t}+t_n$ for all $n\in\mathbb{N}$. Since the sequence $(t_n)_{n\in\mathbb{N}}$ converges to the value $\varphi_{A,k}(y^2)\in\mathbb{R}$ from above, we get $\varphi_{A,k}(y^1)\leq -\tilde{t}+\varphi_{A,k}(y^2)<\varphi_{A,k}(y^2)$.
\qed
\end{itemize}

\smallskip
The previous theorem contains some interesting special cases.

\begin{corollary}\label{t251M-recc}
\begin{itemize}
\item[]
\item[(a)] $\varphi _{A,k}$ is $(-0^+A)$-monotone.
\item[(b)] If  $\varphi _{A,k}$ is finite-valued on $F\subseteq Y$, then $\varphi _{A,k}$ is strictly $(-\operatorname{core}0^+A)$-monotone on $F$.
\item[(c)] If $A$ is a proper subset of $Y$ and $k\in -\operatorname*{core}0^+A$, then $\varphi _{A,k}$ is finite-valued and strictly $(-\operatorname{core}0^+A)$-monotone.
\end{itemize}
\end{corollary}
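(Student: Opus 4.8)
The plan is to obtain all three parts as direct specializations of Theorem \ref{t251M}, taking the monotonicity set to be $B=-0^+A$ throughout. The one fact I need about the recession cone is the inclusion $A+0^+A\subseteq A$: for any $a\in A$ and $u\in 0^+A$ the defining property of $0^+A$ gives $a+tu\in A$ for all $t\in\mathbb{R}_+$, and taking $t=1$ yields $a+u\in A$. With $B=-0^+A$ this reads $A-B=A+0^+A\subseteq A$, which is precisely the hypothesis appearing in several items of Theorem \ref{t251M}. So there is essentially no obstacle; the work is just matching each assertion to the right item of that theorem.

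For (a), I would apply Theorem \ref{t251M}(1) with $B=-0^+A$. Since $A-B=A+0^+A\subseteq A$ as just noted, the conclusion is immediately that $\varphi_{A,k}$ is $(-0^+A)$-monotone.

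For (b), assuming $\varphi_{A,k}$ is finite-valued on $F$, I would invoke Theorem \ref{t251M}(5) with the same $B=-0^+A$. That item turns the inclusion $A-B\subseteq A$ into strict $(\operatorname*{core}B)$-monotonicity on $F$; here $\operatorname*{core}B=\operatorname*{core}(-0^+A)=-\operatorname*{core}0^+A$, because $y\mapsto -y$ is a linear bijection and hence carries the algebraic interior of a set onto the algebraic interior of its reflection. This yields strict $(-\operatorname*{core}0^+A)$-monotonicity on $F$.

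For (c), the finite-valuedness comes for free from Theorem \ref{theo-k-core}, whose hypotheses ($A$ a proper subset of $Y$ and $k\in-\operatorname*{core}0^+A$) coincide with those of (c). Once $\varphi_{A,k}$ is finite-valued on all of $Y$, I would apply (b) with $F=Y$; since $\operatorname*{dom}\varphi_{A,k}=Y$ in this case, strict monotonicity on $F=Y$ is exactly strict $(-\operatorname*{core}0^+A)$-monotonicity in the unqualified sense of Definition \ref{d-mon}(c). The only points worth verifying are the elementary inclusion $A+0^+A\subseteq A$ and the identity $\operatorname*{core}(-0^+A)=-\operatorname*{core}0^+A$, both of which are immediate.
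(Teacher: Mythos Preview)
Your proposal is correct and matches the paper's approach: the paper states that the corollary is a special case of Theorem \ref{t251M} and notes that part (c) follows from Lemma \ref{cor_finite_Zal}. Your citation of Theorem \ref{theo-k-core} for the finite-valuedness in (c) is in fact tighter than the paper's pointer to Lemma \ref{cor_finite_Zal}, since that lemma alone only yields $\operatorname*{dom}\varphi_{A,k}=Y$ while the exclusion of the value $-\infty$ is precisely what Theorem \ref{theo-k-core} adds.
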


Part (c) of Corollary \ref{t251M-recc} results from Lemma \ref{cor_finite_Zal}.

Furthermore, Theorem \ref{t251M} and Theorem \ref{t-K4-cons} yield the following corollary.

\begin{corollary}\label{-A-mon}
Assume $A+A\subseteq A$.
\begin{itemize}
\item[(a)] $\varphi _{A,k}$ is $(-A)$-monotone.
\item[(b)] If  $\varphi _{A,k}$ is finite-valued on $F\subseteq Y$, then $\varphi _{A,k}$ is strictly $(-\operatorname{core}A)$-monotone on $F$.
\end{itemize}
\end{corollary}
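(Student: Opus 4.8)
The plan is to derive this corollary directly from Theorem \ref{t251M} and Theorem \ref{t-K4-cons}, exactly as the surrounding text announces, by taking the special case $B=A$. The hypothesis $A+A\subseteq A$ is the crucial link: it says precisely that $A$ is closed under addition, which I will reformulate as $A-(-A)\subseteq A$, i.e. $A-B\subseteq A$ with $B:=-A$. This is the single algebraic observation the whole proof hinges on, and it is the only step where one must be careful about signs.

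For part (a), once I have written $A+A\subseteq A$ in the form $A-(-A)\subseteq A$, I would apply Theorem \ref{t251M}(1) with $B=-A$. That statement reads $A-B\subseteq A\implies \varphi_{A,k}$ is $B$-monotone, so it immediately yields that $\varphi_{A,k}$ is $(-A)$-monotone. No further work is needed here.

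For part (b), I would first invoke Theorem \ref{t-K4-cons}(I)(c): since $A+A\subseteq A$, the functional $\varphi_{A,k}$ is subadditive. I would then apply Theorem \ref{t251M}(5), again with $B=-A$: that part asserts that $A-B\subseteq A$ together with finite-valuedness on $F$ implies $\varphi_{A,k}$ is strictly $(\operatorname{core}B)$-monotone on $F$. Substituting $B=-A$ and using $\operatorname{core}(-A)=-\operatorname{core}A$ gives strict $(-\operatorname{core}A)$-monotonicity on $F$, which is the claim.

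The proof is therefore almost entirely a matter of correctly instantiating $B=-A$ in the two cited results and tracking the sign through the recession-cone and core notation. I do not expect any genuine obstacle; the only point requiring attention is the bookkeeping $A+A\subseteq A\iff A-(-A)\subseteq A$ and the elementary identity $\operatorname{core}(-A)=-\operatorname{core}A$, after which parts (a) and (b) follow by direct citation of Theorem \ref{t251M}(1) and Theorem \ref{t251M}(5) respectively, with the subadditivity from Theorem \ref{t-K4-cons} recorded along the way.
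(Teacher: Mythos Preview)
Your proof is correct and matches the paper's approach, which simply cites Theorems \ref{t251M} and \ref{t-K4-cons} without further detail. One minor point: the subadditivity you record from Theorem \ref{t-K4-cons}(I)(c) is never actually used in your argument for (b), since Theorem \ref{t251M}(5) requires only $A-B\subseteq A$ and finite-valuedness on $F$; that step can simply be dropped.
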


The assumptions of all parts of Corollary \ref{-A-mon} do not imply that $A$ is a cone or a shifted cone, even if $A$ is a closed convex set and $\varphi_{A,k}$ is proper.
\begin{example}
In $Y=\mathbb{R}^{2}$, consider $A:=\{(y_1,y_2)^T\in\mathbb{R}^{2}\mid y_{2}\ge \frac{1}{y_{1}}, y_{1}>0\}$ and $k:=(-1,0)^T$. 
Then $k\in-0^+A$, and $A$ is a closed convex proper subset of $Y$ for that $A+A\subseteq A$ holds and that does not contain lines parallel to $k$.
\end{example}
\smallskip

\section{Functions $\varphi_{A,k}$ with varying $A$ and $k$}\label{sec-var-Ak}

Let us now investigate the influence of the choice of $k$ on the values of $\varphi _{A, k}$. In this context, we will also investigate whether the following condition is fulfilled:\\

\smallskip

\begin{tabular}{ll}
$(SP_{A,k})$: & $A$ is a proper subset of $\,Y$, $k\in -0^+A\setminus\{0\}$ and\\
&  $A$ is $k$-directionally closed. 
\end{tabular}

\begin{proposition}\label{t-scale}
Consider some arbitrary $\lambda\in\mathbb{R}_{>}$.
Then $\operatorname*{dom}\varphi _{A,\lambda k}=\operatorname*{dom}\varphi _{A,k}$ and
\begin{displaymath}
\varphi_{A,\lambda k}(y)=\frac{1}{\lambda} \varphi_{A,k}(y) \quad \mbox{ for all } y\in Y.
\end{displaymath}
$\varphi_{A,\lambda k}$ is 
proper, finite-valued, convex, concave, subadditive, superadditive, affine, linear, sublinear, positively homogeneous, odd or homogeneous if and only if $\varphi_{A,k}$ has the same property. 
If $B\subset Y$, then $\varphi_{A,\lambda k}$ is 
$B$-monotone or strictly $B$-monotone if $\varphi_{A,k}$ has the same property.\\
If $(SP_{A,k})$ is satisfied, then $(SP_{A,\lambda k})$ is fulfilled.
\end{proposition}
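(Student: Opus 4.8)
The plan is to establish the scaling identity $\varphi_{A,\lambda k}=\tfrac1\lambda\varphi_{A,k}$ first and then read off every remaining assertion from it. For the identity I would argue directly from Definition~\ref{d-funcak0}. Fix $y\in Y$ and $\lambda\in\mathbb{R}_>$ and substitute $t=\lambda s$ in the set defining $\varphi_{A,\lambda k}(y)$: since $y\in A+s(\lambda k)=A+(\lambda s)k$, the admissible parameters $s$ for $\varphi_{A,\lambda k}(y)$ are precisely $s=t/\lambda$ with $t$ admissible for $\varphi_{A,k}(y)$. Because $\lambda>0$, the map $t\mapsto t/\lambda$ is an increasing bijection of $\mathbb{R}$, hence commutes with the infimum, giving $\inf\{t/\lambda\mid y\in A+tk\}=\tfrac1\lambda\inf\{t\mid y\in A+tk\}$ and thus $\varphi_{A,\lambda k}(y)=\tfrac1\lambda\varphi_{A,k}(y)$. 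The one point requiring care is the behaviour at non-finite values: when the defining set is empty both sides equal $\nu$, and the conventions $\tfrac1\lambda\,\nu=\nu$ and $\tfrac1\lambda(-\infty)=-\infty$ from~\cite{Wei15} keep the formula valid on all of $Y$. The domain equality then follows either from (\ref{dom_Ak}) (since $\mathbb{R}(\lambda k)=\mathbb{R}k$) or directly, as $\tfrac1\lambda\varphi_{A,k}(y)\in\mathbb{R}\cup\{-\infty\}$ exactly when $\varphi_{A,k}(y)\in\mathbb{R}\cup\{-\infty\}$.

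Given the identity with factor $\tfrac1\lambda>0$, the preservation of all the listed properties is an instance of the general fact that multiplication by a positive constant is an order isomorphism of $\overline{\mathbb{R}}$ compatible with the additions and scalar multiplications that appear. Each property in the first list is expressed through (in)equalities among function values, sums of function values, and scalar multiples of the argument; multiplying both sides by $\tfrac1\lambda>0$ preserves $\le$, $<$, $=$ and commutes with these operations, so $\varphi_{A,\lambda k}$ satisfies the relation whenever $\varphi_{A,k}$ does, and conversely upon reading the identity as $\varphi_{A,k}=\lambda\varphi_{A,\lambda k}$; this gives the ``if and only if''. Since the effective domains coincide, the convexity/conicity of the domain and the restriction ``$y\in\operatorname*{dom}$'' are unaffected, which simultaneously settles properness and finite-valuedness. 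The same reasoning yields the monotonicity claims in the stated (one-sided) form: if $y^2-y^1\in B$ (respectively $B\setminus\{0\}$) forces $\varphi_{A,k}(y^1)\le\varphi_{A,k}(y^2)$ (respectively $<$), then multiplying by $\tfrac1\lambda>0$ gives the corresponding (strict) inequality for $\varphi_{A,\lambda k}$, again using that the domains agree.

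Finally I would verify that $(SP_{A,k})$ transfers to $(SP_{A,\lambda k})$ clause by clause. That $A$ is a proper subset of $Y$ does not involve $k$, so it carries over verbatim. Since $0^+A$ is a cone and $\lambda>0$, $k\in-0^+A$ gives $\lambda k\in-0^+A$, while $\lambda k\ne0$ follows from $k\ne0$ and $\lambda>0$; hence $\lambda k\in-0^+A\setminus\{0\}$. For $\lambda k$-directional closedness I reparametrize the sequences: if $s_n\searrow0$ and $y-s_n(\lambda k)\in A$, then $t_n:=\lambda s_n\searrow0$ and $y-t_nk\in A$, so $k$-directional closedness of $A$ yields $y\in A$; thus $A$ is $\lambda k$-directionally closed and $(SP_{A,\lambda k})$ holds. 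The proof is essentially routine, and the only genuine subtlety — the step I would treat most carefully — is the bookkeeping around the symbolic value $\nu$ and the value $-\infty$, i.e. making sure both the scaling identity and the domain equality remain correct at points where $\varphi_{A,k}$ is not real-valued.
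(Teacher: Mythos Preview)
Your proposal is correct and follows essentially the same approach as the paper: establish the scaling identity $\varphi_{A,\lambda k}=\tfrac1\lambda\varphi_{A,k}$ by the substitution $u=\lambda t$ (the paper writes it as $\inf\{\tfrac1\lambda u\mid y\in uk+A\}=\tfrac1\lambda\inf\{u\mid y\in uk+A\}$) and then declare that ``the other assertions follow from this equation.'' You are in fact more thorough than the paper, which does not spell out the handling of $\nu$ and $-\infty$, the property-by-property transfer, or the clause-by-clause verification of $(SP_{A,\lambda k})$; your reparametrization argument for $\lambda k$-directional closedness is exactly the missing detail one would want there.
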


\begin{proof}
$\varphi _{A, \lambda k}(y)=\inf \{t\in {\mathbb{R}} \mid y\in t(\lambda k) + A\}
=\inf \{t\in {\mathbb{R}} \mid y\in (\lambda t) k + A\}=\inf \{\frac{1}{\lambda} u \mid u\in {\mathbb{R}}, y\in u k + A\}
=\frac{1}{\lambda}\inf \{u\in {\mathbb{R}} \mid y\in u k + A\}
=\frac{1}{\lambda}\varphi _{A, k}(y)\mbox{ for all } y\in Y$. 
The other assertions follow from this equation.
\end{proof}

The proposition underlines that replacing $k$ by another vector in the same direction just scales the functional. Consequently, $\varphi _{A, k}$ and $\varphi _{A, \lambda k}$, $\lambda >0$, take optimal values on some set $F\subset Y$ at the same elements of $F$. Hence it is sufficient to consider only one vector $k$ per direction in optimization problems, e.g., to restrict $k$ to unit vectors if $Y$ is a normed space.

If $\varphi _{A,k}(0)\in\mathbb{R}$, the functional can be shifted in such a way that the function value in the origin becomes zero and essential properties of the functional do not change.
\begin{proposition}\label{0-shift}
Consider some arbitrary $c\in \mathbb{R}$.
Then
\begin{eqnarray*}
\operatorname*{dom}\varphi _{A+ck,k} & = & \operatorname*{dom}\varphi _{A,k} \quad\quad\mbox{ and}\\
\varphi_{A+ck,k}(y) & = & \varphi_{A,k}(y)-c \quad\mbox{ for all } y\in Y.
\end{eqnarray*}
If $(SP_{A,k})$ is satisfied, then $(SP_{A+ck,k})$ is fulfilled.
\end{proposition}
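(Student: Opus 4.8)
The plan is to derive everything from the defining infimum (\ref{funcak0}) by a single change of variable, and then to transfer the three ingredients of $(SP_{A,k})$ one at a time, using that the map $y\mapsto y+ck$ is a bijection of $Y$. First I would settle the domain equality, which needs no computation beyond (\ref{dom_Ak}): since $ck\in\mathbb{R}k$, we have $ck+\mathbb{R}k=\mathbb{R}k$, and therefore
\[
\operatorname*{dom}\varphi_{A+ck,k}=(A+ck)+\mathbb{R}k=A+(ck+\mathbb{R}k)=A+\mathbb{R}k=\operatorname*{dom}\varphi_{A,k}.
\]

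Next I would prove the value identity. Fix $y\in Y$. Because $(A+ck)+tk=A+(t+c)k$ for every $t\in\mathbb{R}$, the index set in the definition of $\varphi_{A+ck,k}(y)$ can be rewritten, via the substitution $u=t+c$, as
\[
\{t\in\mathbb{R}\mid y\in(A+ck)+tk\}=\{t\in\mathbb{R}\mid y\in A+(t+c)k\}=\{u-c\mid u\in\mathbb{R},\ y\in A+uk\}.
\]
Taking infima and pulling out the constant shift yields $\varphi_{A+ck,k}(y)=\varphi_{A,k}(y)-c$. For $y$ in the common domain this is an identity in $\overline{\mathbb{R}}$ (with $-\infty-c=-\infty$ in the improper case), while for $y\notin\operatorname*{dom}\varphi_{A,k}$ both index sets are empty, both infima equal $\nu$, and the identity holds under the convention that $\nu$ absorbs real shifts, as in \cite{Wei15}.

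Finally, I would transfer $(SP_{A,k})$. Properness is immediate, since the bijection $y\mapsto y+ck$ carries $A$ onto $A+ck$, so $A+ck=Y$ forces $A=Y$, and $A+ck\neq\emptyset$ because $A\neq\emptyset$. For the recession cone I would verify $0^+(A+ck)=0^+A$: writing a generic element of $A+ck$ as $a+ck$ with $a\in A$, one has $(a+ck)+tu\in A+ck\iff a+tu\in A$, whence $u\in 0^+(A+ck)\iff u\in 0^+A$; this gives $k\in-0^+(A+ck)\setminus\{0\}$. And $k$-directional closedness transfers because condition (\ref{k-dir-clsd}) for $A+ck$ evaluated at $y$ is exactly the same condition for $A$ evaluated at $y-ck$, using $y-t_nk\in A+ck\iff(y-ck)-t_nk\in A$ together with $y\in A+ck\iff y-ck\in A$. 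Hence $(SP_{A+ck,k})$ holds.

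I expect no serious obstacle here; the only points that deserve a moment of care are the bookkeeping with the symbolic value $\nu$ (equivalently, treating the empty index set consistently across the two functionals) and the translation invariance of the recession cone, both of which are routine once the change of variable is in place.
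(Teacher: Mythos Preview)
Your argument is correct. The paper states this proposition without proof, presumably regarding it as immediate; your change-of-variable computation is exactly in the spirit of the paper's proof of the neighbouring Proposition~\ref{t-scale}, and your verification that properness, the recession cone, and $k$-directional closedness are translation-invariant fills in the details the paper leaves implicit.
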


In vector optimization or when dealing with variable domination structures, the functional is often constructed by sets that depend on some given point $y^0$.
\begin{proposition}\label{A-shift}
Consider some arbitrary $y^{0}\in Y$.
Then 
\begin{eqnarray*}
\operatorname*{dom}\varphi _{y^{0}+A,k} & = & y^{0}+\operatorname*{dom}\varphi _{A,k} \quad\mbox{ and}\\
\varphi_{y^{0}+A,k}(y) & = & \varphi_{A,k}(y-y^{0}) \quad \mbox{ for all } y\in Y.
\end{eqnarray*}
$\varphi_{y^{0}+A,k}$ is 
proper, finite-valued, convex, concave or affine if and only if $\varphi_{A,k}$ has the same property.
For $B\subset Y$, $\varphi_{y^{0}+A,k}$ is
$B$-monotone or strictly $B$-monotone if and only if $\varphi_{A,k}$ has the same property.\\
If $(SP_{A,k})$ is satisfied, then $(SP_{y^0+A,k})$ is fulfilled.
\end{proposition}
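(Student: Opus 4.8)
The plan is to reduce the entire proposition to a single functional identity obtained directly from the definition (\ref{funcak0}), exactly in the spirit of the proofs of Proposition \ref{t-scale} and Proposition \ref{0-shift}. For arbitrary $y\in Y$ I would write
\[
\varphi_{y^{0}+A,k}(y)=\inf\{t\in\mathbb{R}\mid y\in(y^{0}+A)+tk\}=\inf\{t\in\mathbb{R}\mid y-y^{0}\in A+tk\}=\varphi_{A,k}(y-y^{0}),
\]
using only the equivalence $y\in y^{0}+A+tk\iff y-y^{0}\in A+tk$. This is the computational core, and I expect no difficulty here.

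From this identity the domain statement is immediate: since $\varphi_{A,k}$ never takes the value $+\infty$, one has $y\in\operatorname*{dom}\varphi_{y^{0}+A,k}$ iff $\varphi_{A,k}(y-y^{0})\in\mathbb{R}\cup\{-\infty\}$ iff $y-y^{0}\in\operatorname*{dom}\varphi_{A,k}$, which is $y\in y^{0}+\operatorname*{dom}\varphi_{A,k}$. The map $T\colon y\mapsto y-y^{0}$ is an affine bijection of $Y$ with $\varphi_{y^{0}+A,k}=\varphi_{A,k}\circ T$ and $T(\operatorname*{dom}\varphi_{y^{0}+A,k})=\operatorname*{dom}\varphi_{A,k}$; as precomposition with an affine bijection preserves properness, finite-valuedness, convexity, concavity and affinity, and $T^{-1}$ is again such a map, each of these properties of $\varphi_{y^{0}+A,k}$ is equivalent to the same property of $\varphi_{A,k}$. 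For the monotonicity claims I would use the identity $(y^{2}-y^{0})-(y^{1}-y^{0})=y^{2}-y^{1}$: thus $y^{1},y^{2}\in\operatorname*{dom}\varphi_{y^{0}+A,k}$ with $y^{2}-y^{1}\in B$ (resp. $B\setminus\{0\}$) correspond under $T$ to points of $\operatorname*{dom}\varphi_{A,k}$ whose difference again lies in $B$ (resp. $B\setminus\{0\}$), so $B$-monotonicity and strict $B$-monotonicity transfer in both directions.

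The only mildly delicate part is the implication $(SP_{A,k})\Rightarrow(SP_{y^{0}+A,k})$, where I would check the three defining clauses separately. First, $y^{0}+A$ is a nonempty proper subset of $Y$ because translation by $y^{0}$ is a bijection and $A$ is nonempty and proper. Second, the recession cone is translation invariant: rewriting $a+tu\in y^{0}+A$ as $(a-y^{0})+tu\in A$ shows $0^{+}(y^{0}+A)=0^{+}A$, whence $k\in-0^{+}A\setminus\{0\}=-0^{+}(y^{0}+A)\setminus\{0\}$. Third, $k$-directional closedness is translation invariant, since condition (\ref{k-dir-clsd}) for $y^{0}+A$ at a point $y$ is, after subtracting $y^{0}$, precisely the same condition for $A$ at $y-y^{0}$. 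The main obstacle, such as it is, lies in correctly recording these two translation invariances; once they are in place, all three clauses of $(SP_{y^{0}+A,k})$ follow at once.
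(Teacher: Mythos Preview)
Your argument is correct and complete. The paper itself omits the proof of this proposition, evidently regarding it as routine in the same spirit as Proposition~\ref{t-scale}; your derivation of the functional identity directly from (\ref{funcak0}) and the subsequent reduction of all remaining claims to translation invariance is exactly the intended approach.
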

\smallskip

\section{Convex Functions with Uniform Sublevel Sets}\label{sec-cx}

In many applications, the set $A$ in the definition of the functional $\varphi_{A,k}$ is a non-trivial convex cone since it is then closely related to the cone order (cp. Section \ref{sec-Monot}). As pointed out in \cite{foe02}, for functionals $\varphi _{A,k}$ used in the formulation of risk measures, $A$ is the so-called acceptance set and just the ordering cone in a function space $L^p$. This cone has an empty interior.

Several properties of $\varphi_{A,k}$ for convex cones $A$ follow immediately from the previous sections, taking into consideration that the recession cone of a convex cone $A$ is $A$.

\begin{proposition}
Assume that $A\subset Y$ is a non-trivial convex cone. Then
\begin{itemize}
\item[(a)] $\operatorname*{dom}\varphi _{A,k}$ is convex,
\item[(b)] $\varphi_{A,k}$ is sublinear and $(-A)$-monotone.
Moreover, the function $\varphi_{A,k}$ is strictly $(-\operatorname*{core}A)$-monotone on each set $F\subseteq Y$ on that it is finite-valued.
\item[(c)] If $k\in A$, then $\varphi _{A,k}(y)=-\infty\mbox{ for all } y\in\operatorname*{dom}\varphi _{A,k}$.\\
If $k\in (-A)\cap A$, then $\operatorname*{dom}\varphi _{A,k}=A$.
\item[(d)] If $k\in -A$, then $\operatorname*{dom}\varphi _{A,k}=A+\mathbb{R}_>k$.
If, additionally, $A$ is $k$-directionally closed, then 
$$\operatorname*{lev}\nolimits_{\varphi_{A,k},\leq}(t)  =  A+tk \quad \mbox{ for all } t \in\mathbb{R}.$$
\item[(e)] $\varphi _{A,k}$ is finite-valued if and only if 
\begin{itemize}
\item[(i)] $k\in -\operatorname*{core}A$ or
\item[(ii)] $A$ is a linear subspace of $\,Y$ of codimension $1$ and $k\not\in C$.
\end{itemize} 
For $k\in -\operatorname*{core}A$, we have
$\operatorname*{lev}\nolimits_{\varphi  _{A,k} ,<}(t)=\operatorname*{core}A +tk \mbox{ for all } t \in \mathbb{R}$.\\
In case (ii), $\varphi _{A,k}$ is a linear function.
\end{itemize}
\end{proposition}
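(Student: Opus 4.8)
The guiding observation is that a convex cone equals its own recession cone, i.e. $0^+A=A$: a convex cone is closed under addition and nonnegative scaling, so $A\subseteq 0^+A$, while $0\in A$ gives the reverse inclusion. With this identity recorded, parts (a)--(d) reduce to results already established for general $A$, and only part (e) requires genuine work.

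For (a), $\operatorname*{dom}\varphi_{A,k}=A+\mathbb{R}k$ by (\ref{dom_Ak}) is a sum of convex sets, hence convex. For (b), sublinearity is Theorem \ref{t-K4-cons}(I)(d), while $(-A)$-monotonicity and strict $(-\operatorname*{core}A)$-monotonicity on any set where $\varphi_{A,k}$ is finite-valued follow from Corollary \ref{t251M-recc}(a),(b) after replacing $0^+A$ by $A$ (or, equivalently, from Corollary \ref{-A-mon}, since $A+A\subseteq A$). For (c), if $k\in A=0^+A$ then Theorem \ref{varphi-theo-allg}(f) forces $\varphi_{A,k}\equiv-\infty$ on its domain; if in addition $-k\in A$, then $\mathbb{R}k\subseteq A$, so $A+\mathbb{R}k\subseteq A+A\subseteq A\subseteq A+\mathbb{R}k$ and the domain collapses to $A$. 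For (d), $k\in -A=-0^+A$ lets Proposition \ref{p-k-recc}(a) give $\operatorname*{dom}\varphi_{A,k}=A+\mathbb{R}_>k$, and when $A$ is $k$-directionally closed the hypotheses of Theorem \ref{varphi-theo-allg}(c) are met, yielding (\ref{K4-ak}).

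The core of the argument is (e). For necessity, finiteness first forces $\operatorname*{dom}\varphi_{A,k}=A+\mathbb{R}k=Y$, so Lemma \ref{cor_finite_Zal} applied to the convex cone $A$ leaves exactly two possibilities: either $A$ is a codimension-$1$ subspace with $k\notin A$, which is (ii), or $\{k,-k\}\cap\operatorname*{core}A\neq\emptyset$. In the second possibility $k\in\operatorname*{core}A$ cannot occur, since $k\in A=0^+A$ would again make $\varphi_{A,k}\equiv-\infty$ by Theorem \ref{varphi-theo-allg}(f), contradicting finiteness; hence $-k\in\operatorname*{core}A$, which is (i). For sufficiency, case (i) is precisely Theorem \ref{theo-k-core} applied with $\operatorname*{core}0^+A=\operatorname*{core}A$, which simultaneously yields the finiteness and the level-set identity (\ref{core_in_equal}). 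Case (ii) I would handle by hand: from $Y=A\oplus\mathbb{R}k$ each $y$ has a unique decomposition $y=a+tk$, and because $A$ is a subspace with $k\notin A$ one has $y-sk\in A$ if and only if $s=t$, so $\varphi_{A,k}(y)=t$, a finite, linear functional.

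The hard part will be keeping the case distinction in (e) watertight. I must invoke Theorem \ref{varphi-theo-allg}(f) at the right moment to discard the branch $k\in\operatorname*{core}A$, and I must confirm that case (ii) produces a genuinely finite and linear function rather than a merely everywhere-defined one; here it is worth noting that a proper subspace has empty algebraic interior, so cases (i) and (ii) are automatically mutually exclusive and the characterization is unambiguous. Once $0^+A=A$ and these two points are in place, the rest is bookkeeping.
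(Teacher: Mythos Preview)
Your proof is correct and follows essentially the same route as the paper's: both derive (a)--(d) from the general results (Theorem \ref{varphi-theo-allg}, Theorem \ref{t-K4-cons}, Theorem \ref{t251M}/Corollary \ref{-A-mon}, Proposition \ref{p-k-recc}) via the identity $0^+A=A$, and both handle (e) by combining Lemma \ref{cor_finite_Zal} with the observation that $k\in A$ forces $\varphi_{A,k}\equiv-\infty$. The only cosmetic difference is ordering: the paper rules out $k\in A$ via part (c) \emph{before} invoking Lemma \ref{cor_finite_Zal}, whereas you apply the lemma first and then discard the branch $k\in\operatorname*{core}A$; your explicit computation $\varphi_{A,k}(a+tk)=t$ in case (ii) is also slightly more detailed than the paper's one-line remark.
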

{\it Proof}
\begin{itemize}
\item[(a)] is obvious. (b) follows from Theorem \ref{t-K4-cons} and Theorem \ref{t251M}.
\item[(c)] The first statement results from Theorem \ref{varphi-theo-allg}.
$k\in (-A)\cap A$ implies $a+\mathbb{R}k\subseteq A\mbox{ for all } a\in A$ and thus $\operatorname*{dom}\varphi_{A,k}=\mathbb{R}k+A=A$.
\item[(d)] is implied by Proposition \ref{p-k-recc} and Theorem \ref{varphi-theo-allg}.
\item[(e)] If $\varphi _{A,k}$ is finite-valued, then $k\notin A$ by (c), which together with Lemma \ref{cor_finite_Zal} results in the cases (i) and (ii).\\
The assertion for case (i) follows from Theorem \ref{theo-k-core}.\\
Assume now (ii). Since each $y\in Y$ has a unique presentation $y=a+tk$ with $a\in A$ and $t\in\mathbb{R}$, $\varphi _{A,k}$ is finite-valued and linear.
\qed
\end{itemize}

\begin{lemma}\label{cone-alg-clsd}
Assume that $A\subset Y$ is a non-trivial convex cone and $k\in-\operatorname*{core}A$. Then
$A$ is $k$-directionally closed if and only if $A$ is algebraically closed.
\end{lemma}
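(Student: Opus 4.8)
The plan is to treat the two implications separately, getting the forward implication essentially for free from Lemma \ref{l-dir-clsd-suff} and reducing the converse to the scalarizing functional $\varphi_{A,k}$. Throughout I use that the recession cone of a convex cone equals the cone itself, i.e.\ $0^+A=A$, and that $k\neq 0$ is a standing assumption of the paper.

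First I would dispose of the direction \emph{algebraically closed $\Rightarrow$ $k$-directionally closed}. Since $A$ is a convex cone we have $0^+A=A$, so the hypothesis $k\in-\operatorname*{core}A$ gives $k\in-A=-0^+A$, and together with $k\neq 0$ this yields $k\in-0^+A\setminus\{0\}$. Hence, if $A$ is algebraically closed, Lemma \ref{l-dir-clsd-suff} applies verbatim and produces the $k$-directional closedness of $A$.

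For the converse, assume $A$ is $k$-directionally closed. The point of the argument is to realize $A$ as the $0$-sublevel set of a finite convex functional. Being a non-trivial cone, $A$ is a proper subset of $Y$, and $k\in-\operatorname*{core}A=-\operatorname*{core}0^+A$; Theorem \ref{theo-k-core} then guarantees that $\varphi_{A,k}$ is finite-valued, while Theorem \ref{t-K4-cons}(I) makes it convex (indeed sublinear). Moreover $k\in-0^+A$ combined with $k$-directional closedness lets me invoke Theorem \ref{varphi-theo-allg}(c) at $t=0$, giving $A=\operatorname*{lev}\nolimits_{\varphi_{A,k},\leq}(0)=\{y\in Y\mid\varphi_{A,k}(y)\leq 0\}$.

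The crux is then a single real-variable observation. To show $A$ is algebraically closed it suffices to show it meets every line in a set closed in that line. So I fix $x,d\in Y$ and consider $g\colon\mathbb{R}\to\mathbb{R}$, $g(t):=\varphi_{A,k}(x+td)$. As the composition of the finite-valued convex functional $\varphi_{A,k}$ with the affine map $t\mapsto x+td$, the function $g$ is a finite convex function on all of $\mathbb{R}$, hence continuous (a finite convex function of one real variable is continuous on the interior of its domain, here all of $\mathbb{R}$). Consequently $\{t\in\mathbb{R}\mid x+td\in A\}=\{t\in\mathbb{R}\mid g(t)\leq 0\}=g^{-1}((-\infty,0])$ is closed, and since the line was arbitrary, $A$ is algebraically closed. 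I expect the only conceptual step to be recognizing that algebraic closedness carries exactly the topological content of lower semicontinuity of $\varphi_{A,k}$ along lines, which under these hypotheses is automatic from finiteness plus convexity; no genuine estimate is required, the work being the assembly of the earlier structural results so that $A$ appears as a sublevel set.
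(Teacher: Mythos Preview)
Your proof is correct and takes a genuinely different route from the paper's. Both directions agree on the easy half (algebraically closed $\Rightarrow$ $k$-directionally closed via Lemma~\ref{l-dir-clsd-suff}), but for the substantive implication the paper argues by hand: given $a\in A$ and $y\in Y$ with $a+\lambda(y-a)\in A$ for all $\lambda\in(0,1)$, it uses Lemma~\ref{cone_gen_noncx} to write $y-a=a^0+tk$ with $a^0\in A$, $t>0$, and then checks that $y-\tfrac{t}{n}k=a+\bigl(1-\tfrac{1}{n}\bigr)(y-a)+\tfrac{1}{n}a^0\in A$ for every $n$, so $k$-directional closedness forces $y\in A$. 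Your argument instead packages the geometry into $\varphi_{A,k}$: you realize $A$ as the $0$-sublevel set of a finite-valued convex functional and then invoke the one-variable fact that finite convex functions on $\mathbb{R}$ are continuous, so sublevel sets are closed along every line. The paper's proof is self-contained and elementary, needing only the decomposition $Y=A+\mathbb{R}_{>}k$; your proof is more conceptual and shows nicely why the result is ``automatic'' once the scalarization machinery of Theorems~\ref{varphi-theo-allg}, \ref{theo-k-core} and~\ref{t-K4-cons} is in place, at the cost of importing the continuity fact for real convex functions. Since $A$ is convex, your line-by-line characterization of algebraic closedness coincides with the segment criterion the paper uses, so no gap arises there.
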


\begin{proof}
Suppose first that $A$ is $k$-directionally closed.
Consider some arbitrary elements $a\in A$, $y\in Y$ with $a+\lambda (y-a)\in A\mbox{ for all } \lambda\in (0,1)$.
Since $Y=A+\mathbb{R}_>k$ by Lemma \ref{cone_gen_noncx}, there exist $a^0\in A$ and $t\in\mathbb{R}_>$ with $y-a=a^0+tk$.
For each $n\in\mathbb{N}_>$, $\frac{1}{n}a^0=\frac{1}{n}(y-a)-\frac{1}{n}tk$ and $y-\frac{1}{n}tk=a+(1-\frac{1}{n})(y-a)+\frac{1}{n}a^0\in A$. Hence $y\in A$ because of (\ref{k-dir-clsd}). Thus $A$ is algebraically closed. Lemma \ref{l-dir-clsd-suff} yields the assertion.
\end{proof}

In Lemma \ref{cone-alg-clsd}, the assumption $k\in-\operatorname*{core}A$ can not be replaced by $k\in (-A)\setminus A$.

\begin{example}
$A:=\{(y_1,y_2,y_3)^T\in \mathbb{R}^3 \mid y_1\geq 0,\,y_2>0,\,y_3>0\}\cup\{(y_1,y_2,y_3)^T\in \mathbb{R}^3 \mid y_1\geq 0,\,y_2=y_3=0\}$ is a convex cone, $k:=(-1,0,0)^T\in (-A)\setminus A$. $A$ is $k$-directionally closed, but $A$ is not algebraically closed.
\end{example}

\begin{corollary}\label{core-fin}
Assume that $A\subset Y$ is a non-trivial algebraically closed convex cone and $k\in  -\operatorname*{core}A$. Then $\varphi  _{A,k}$ is finite-valued, sublinear, $(-A)$-monotone, strictly $(-\operatorname*{core}A)$-monotone,
\begin{eqnarray*}
\operatorname*{lev}\nolimits_{\varphi_{A,k},\leq}(t) & = & A+tk \quad \mbox{ for all } t \in\mathbb{R},\mbox{ and}\\
\operatorname*{lev}\nolimits_{\varphi  _{A,k} ,<}(t) & = & \operatorname*{core}A +tk\quad \mbox{ for all } t \in \mathbb{R}.
\end{eqnarray*}
\end{corollary}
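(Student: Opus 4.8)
The plan is to recognize this as a corollary that simply assembles results already proved, the one genuinely new input being the passage from algebraic closedness to $k$-directional closedness. The key observation, noted just before the preceding proposition, is that a convex cone is its own recession cone, so $0^+A=A$. Since $A$ is non-trivial it is in particular a proper subset of $Y$, and the standing hypothesis $k\in-\operatorname*{core}A$ reads $k\in-\operatorname*{core}0^+A$.

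First I would invoke Theorem \ref{theo-k-core} under this identification: it yields at once that $\varphi_{A,k}$ is finite-valued and that $\operatorname*{lev}_{\varphi_{A,k},<}(t)=\operatorname*{core}A+tk$ for all $t\in\mathbb{R}$, which is the second of the two sublevel-set identities. Sublinearity follows immediately from Theorem \ref{t-K4-cons}(I)(d), since $A$ is a convex cone. For the monotonicity claims I would use that a convex cone satisfies $A+A\subseteq A$: Corollary \ref{-A-mon}(a) then gives $(-A)$-monotonicity, and part (b) of the same corollary, applied on $F=Y$ in view of the finite-valuedness just established, gives strict $(-\operatorname*{core}A)$-monotonicity.

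It remains to prove $\operatorname*{lev}_{\varphi_{A,k},\leq}(t)=A+tk$, and this is the only place the algebraic-closedness hypothesis enters. I would first apply Lemma \ref{cone-alg-clsd}, valid precisely because $A$ is a non-trivial convex cone with $k\in-\operatorname*{core}A$, to conclude that algebraic closedness of $A$ forces $A$ to be $k$-directionally closed. Since moreover $k\in-\operatorname*{core}A\subseteq-A=-0^+A$, the two hypotheses of Theorem \ref{varphi-theo-allg}(c) are met, which delivers exactly (\ref{K4-ak}), i.e. the desired identity for the $\leq$-sublevel sets. No step presents a real obstacle; the only care required is to check that the hypotheses of each cited result are satisfied after the substitution $0^+A=A$, and to route the last identity through Lemma \ref{cone-alg-clsd} so that algebraic closedness is actually used.
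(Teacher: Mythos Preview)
Your proposal is correct and follows essentially the same route the paper intends: the corollary is stated without proof because it is meant to be read off from the preceding proposition on convex cones (parts (b), (d), (e)) together with Lemma \ref{cone-alg-clsd}, and those parts in turn are proved via exactly the earlier results you cite (Theorem \ref{theo-k-core}, Theorem \ref{t-K4-cons}, Corollary \ref{-A-mon}/Theorem \ref{t251M}, Theorem \ref{varphi-theo-allg}(c)). The only difference is that you bypass the intermediate proposition and go straight to the underlying theorems, which is harmless; one could also replace your appeal to Lemma \ref{cone-alg-clsd} by the more elementary Lemma \ref{l-dir-clsd-suff}, since only the direction ``algebraically closed $\Rightarrow$ $k$-directionally closed'' is needed.
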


Functions with uniform sublevel sets that are generated by cones often coincide with a Minkowski functional on a subset of the space.

Let $p_A$ denote the Minkowski functional generated by a set $A$ in a linear space.

\begin{proposition}\label{p-Mink-uni}
Assume that $C\subset Y$ is a non-trivial algebraically closed convex cone and $k\in -\operatorname*{core}C$.
For the Minkowski functional $p_{C+k}$, we get
\[ p_{C+k}(y)=\left\{
\begin{array}{c@{\quad\mbox{ if }\quad}l}
\varphi_{C,k}(y) & y\in Y\setminus C,  \\
0 & y\in C,
\end{array}
\right.
\]
i.e., $$p_{C+k}(y)=\operatorname*{max}\{\varphi_{C,k}(y),0\}\quad\mbox{ for all } y\in Y.$$
$p_{C+k}$ is finite-valued and sublinear.
\end{proposition}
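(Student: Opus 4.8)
The plan is to reduce everything to the already-established properties of $\varphi_{C,k}$ collected in Corollary \ref{core-fin}. Recall that the Minkowski functional is $p_{C+k}(y)=\inf\{\lambda>0\mid y\in\lambda(C+k)\}$. Since $C$ is a cone, $\lambda C=C$ for every $\lambda>0$, so $\lambda(C+k)=C+\lambda k$, and hence
\begin{equation*}
p_{C+k}(y)=\inf\{\lambda>0\mid y\in C+\lambda k\}.
\end{equation*}
This differs from $\varphi_{C,k}(y)=\inf\{t\in\mathbb{R}\mid y\in C+tk\}$ only in that the admissible shifts are restricted to the positive reals; the task is to see how this restriction interacts with the sign of $\varphi_{C,k}$.

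Next I would translate membership in $C+\lambda k$ into a condition on $\varphi_{C,k}$. By Corollary \ref{core-fin}, $\varphi_{C,k}$ is finite-valued and sublinear with $\operatorname*{lev}_{\varphi_{C,k},\le}(\lambda)=C+\lambda k$ for every $\lambda\in\mathbb{R}$; equivalently (this is also consistent with translation invariance (\ref{f-r255nn})), $y\in C+\lambda k\iff\varphi_{C,k}(y)\le\lambda$. In particular $y\in C=C+0\cdot k\iff\varphi_{C,k}(y)\le 0$. Consequently the feasible set above rewrites as $\{\lambda>0\mid\lambda\ge\varphi_{C,k}(y)\}$, and computing $p_{C+k}$ reduces to the elementary evaluation of an infimum of a half-line of positive reals.

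I would then split into the two cases of the claimed formula. If $y\in C$, then $\varphi_{C,k}(y)\le 0$, so every $\lambda>0$ is feasible and the infimum is $0$. If $y\in Y\setminus C$, then $\varphi_{C,k}(y)>0$, so the feasible set is $[\varphi_{C,k}(y),+\infty)$ and the infimum equals $\varphi_{C,k}(y)$. This yields the piecewise description, and since $\varphi_{C,k}(y)\le 0$ holds precisely on $C$, it is equivalent to $p_{C+k}(y)=\max\{\varphi_{C,k}(y),0\}$ for all $y\in Y$. Finiteness and sublinearity then follow for free: $\varphi_{C,k}$ is finite-valued and sublinear by Corollary \ref{core-fin}, the constant $0$ is sublinear, and the pointwise maximum of two sublinear functionals is again sublinear (both positive homogeneity and subadditivity pass to the maximum), so $p_{C+k}=\max\{\varphi_{C,k},0\}$ is finite-valued and sublinear.

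The steps are routine; the only point requiring care is the distinction $\lambda>0$ versus $\lambda\in\mathbb{R}$ together with the boundary stratum $C\setminus\operatorname*{core}C$, where $\varphi_{C,k}$ vanishes. There the feasible set is the open half-line $(0,+\infty)$ rather than a closed one, yet its infimum is still $0$, matching $\max\{0,0\}=0$, so the formula remains valid even though the infimum is not attained. Keeping track of this, and of the fact that $0\in C+k$ (because $-k\in\operatorname*{core}C\subseteq C$) so that $p_{C+k}$ is a genuine nonnegative gauge, is the main thing to check.
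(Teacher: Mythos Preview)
Your proof is correct and follows essentially the same route as the paper: both rewrite $p_{C+k}(y)=\inf\{\lambda>0\mid y\in C+\lambda k\}$ via the cone property, then split according to whether $\varphi_{C,k}(y)$ is positive or nonpositive (equivalently, $y\notin C$ or $y\in C$) using the sublevel-set description from Corollary~\ref{core-fin}. The only notable difference is the justification of sublinearity: the paper observes that $C+k$ is convex and absorbing and cites the standard fact that the Minkowski functional of such a set is sublinear, whereas you deduce it directly from $p_{C+k}=\max\{\varphi_{C,k},0\}$ together with the sublinearity of $\varphi_{C,k}$ and closure of sublinearity under pointwise maximum---a self-contained alternative that avoids the external reference.
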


\begin{proof}
By Corollary \ref{core-fin}, $\varphi_{C,k}$ is finite-valued.\\
For each $y\in Y$, $p_{C+k}(y)=\operatorname*{inf}\{\lambda >0\mid y\in\lambda (C+k)\}=\operatorname*{inf}\{\lambda >0\mid y\in C+\lambda k\}$. Hence $p_{C+k}(y)=\varphi_{C,k}(y)$ if $\varphi_{C,k}(y)>0$. This is just the case for $y\in Y\setminus C$.\\
$C=C-\lambda k+\lambda k\subseteq C+\lambda k\mbox{ for all }\lambda >0$.
Hence $p_{C+k}(y)=0\mbox{ for all } y\in C$ and $p_{C+k}(y)=\operatorname*{max}\{\varphi_{C,k}(y),0\}\mbox{ for all } y\in Y$.
Since $C+k$ is convex and absorbing, $p_{C+k}$ is sublinear by \cite[Lemma 5.50]{AliBor06}.
\end{proof}

We are now going to investigate the relationship between functions with uniform sublevel sets and norms that are defined by the Minkowski functional of an order interval.
Jahn proved the following statement \cite[Lemma 1.45]{Jah86a}.

\begin{lemma}\label{p-norm-ordint}
Suppose that $C \subset Y$ is a non-trivial algebraically closed convex pointed cone and $k\in\operatorname*{core}C$. Then the Minkowski functional of the order interval $[-k,k]_C$ is a norm.
\end{lemma}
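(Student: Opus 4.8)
The plan is to set $B:=[-k,k]_C=(-k+C)\cap(k-C)$ and to verify the three norm axioms for its Minkowski functional $p_B$, organizing the work into showing first that $p_B$ is a seminorm and then that it is definite. Since $C$ is convex, both $-k+C$ and $k-C$ are convex, so $B$ is convex; moreover $0\in B$ because $k\in C$, and $B$ is symmetric: $-B=(-(-k+C))\cap(-(k-C))=(k-C)\cap(-k+C)=B$.

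First I would establish that $B$ is absorbing, which is the only place the hypothesis $k\in\operatorname*{core}C$ is used. Given $y\in Y$, applying the definition of the algebraic interior to $k$ in the two directions $y$ and $-y$ yields some $s>0$ with $k+sy\in C$ and $k-sy\in C$; these say exactly $sy\in -k+C$ and $sy\in k-C$, hence $sy\in B$ and $y\in\tfrac1s B$. With $B$ convex, absorbing and containing $0$, the functional $p_B$ is sublinear by \cite[Lemma 5.50]{AliBor06}; the symmetry $-B=B$ gives $p_B(-y)=p_B(y)$, and combined with positive homogeneity this upgrades positive homogeneity to absolute homogeneity. Together with subadditivity, this shows $p_B$ is a seminorm.

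The main obstacle is definiteness, $p_B(y)=0\Rightarrow y=0$, and this is precisely where pointedness and algebraic closedness enter, since symmetry and the absorbing property alone only yield a seminorm. Suppose $p_B(y)=0$. Because $0\in B$ and $B$ is convex, one has $\lambda B\subseteq\mu B$ whenever $0<\lambda\le\mu$, so vanishing of the infimum forces $y\in\mu B$ for every $\mu>0$. Using that $C$ is a cone, so $\mu C=C$ for $\mu>0$, the membership $y\in\mu B=(-\mu k+C)\cap(\mu k-C)$ reads $y+\mu k\in C$ and $-y+\mu k\in C$ for all $\mu>0$. Letting $\mu\downarrow 0$, the points $y+\mu k$ lie on the line through $y$ in direction $k$ and belong to $C$ for every $\mu>0$, so algebraic closedness of $C$ forces the endpoint $y$ into $C$; the identical argument applied to $-y+\mu k$ gives $-y\in C$. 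Finally, pointedness $C\cap(-C)=\{0\}$ yields $y\in C\cap(-C)=\{0\}$, i.e.\ $y=0$. Hence $p_B$ is a norm.
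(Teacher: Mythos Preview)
Your argument is correct. Note, however, that the paper does not actually prove this lemma: it simply attributes the result to Jahn \cite[Lemma~1.45]{Jah86a} without giving any argument. Your proof is therefore a genuine addition rather than a reconstruction, and it fits cleanly within the paper's framework---the sublinearity step invokes the same \cite[Lemma~5.50]{AliBor06} used in the proof of Proposition~\ref{p-Mink-uni}, and your definiteness step (passing from $y+\mu k\in C$ for all $\mu>0$ to $y\in C$ via the segment from $y+k$ to $y$) uses algebraic closedness in exactly the way the paper does in Lemma~\ref{l-dir-clsd-suff} and Lemma~\ref{cone-alg-clsd}.
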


\begin{remark}
Let $C$ be an ordering cone in $Y$. Then it is obvious, that $k\in Y$ is an order unit of $Y$ if and only if $k\in\operatorname*{core}C$. The order unit norm, which is often denoted by $\| \cdot \|_{\infty}$, is just the norm constructed in Lemma \ref{p-norm-ordint}. For details related to order units, see \cite{AliBor06} and \cite{AliTou07}.
\end{remark}

\begin{proposition}\label{p-ordint-varphi}
Suppose that $C \subset Y$ is a non-trivial algebraically closed convex pointed cone with $k\in\operatorname*{core}C$, $a\in Y$. Denote by $\|\cdot \|_{C,k}$ the norm that is given as the Minkowski functional of the order interval $[-k,k]_C$. Then
\[ \label{eq-norm-varphi}
\| y-a\|_{C,k}=\varphi_{a-C,k}(y)\quad\mbox{ for all } y\in a+C.
\]
\end{proposition}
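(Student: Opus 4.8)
The plan is to translate the problem to the origin and then reduce everything to the sublevel-set description already established in Corollary \ref{core-fin}. Set $z := y-a$, so that $y \in a+C$ means $z \in C$. By Proposition \ref{A-shift} we have $\varphi_{a-C,k}(y)=\varphi_{-C,k}(y-a)=\varphi_{-C,k}(z)$, while $\|y-a\|_{C,k}$ is a function of $z$ alone; hence it suffices to prove $\|z\|_{C,k}=\varphi_{-C,k}(z)$ for every $z\in C$. Now $-C$ is a non-trivial algebraically closed convex cone and $k\in\operatorname*{core}C=-\operatorname*{core}(-C)$, so Corollary \ref{core-fin} (applied with $A=-C$) tells us that $\varphi_{-C,k}$ is finite-valued and that $\operatorname*{lev}_{\varphi_{-C,k},\leq}(t)=-C+tk$ for all $t\in\mathbb{R}$; equivalently, $\varphi_{-C,k}(z)\leq t\iff tk-z\in C$.

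Next I would unpack the Minkowski functional. The order interval is $[-k,k]_C=(-k+C)\cap(k-C)$, and since $C$ is a cone one has $\lambda[-k,k]_C=(-\lambda k+C)\cap(\lambda k-C)$ for $\lambda>0$; thus $z\in\lambda[-k,k]_C$ holds exactly when both $z+\lambda k\in C$ and $\lambda k-z\in C$. The key observation is that for $z\in C$ and $\lambda>0$ the first inclusion is automatic, because $z+\lambda k\in C+C\subseteq C$. Consequently the Minkowski functional collapses to $\|z\|_{C,k}=\inf\{\lambda>0\mid \lambda k-z\in C\}$, which by the equivalence from the previous paragraph equals $\inf\{\lambda>0\mid \varphi_{-C,k}(z)\leq\lambda\}$.

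It remains to match this one-sided infimum with $\varphi_{-C,k}(z)$ itself. For $z\in C$ we have $\varphi_{-C,k}(z)\geq 0$: indeed $\operatorname*{lev}_{\varphi_{-C,k},\leq}(0)=-C$, and pointedness of $C$ gives $C\cap(-C)=\{0\}$, so $\varphi_{-C,k}(z)\leq 0$ forces $z=0$. When $z\neq 0$ we thus have $\varphi_{-C,k}(z)>0$, the constraint $\lambda>0$ is inactive, and $\inf\{\lambda>0\mid \varphi_{-C,k}(z)\leq\lambda\}=\varphi_{-C,k}(z)$; when $z=0$ both sides vanish, since $0\in[-k,k]_C$ gives $\|0\|_{C,k}=0=\varphi_{-C,k}(0)$. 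This yields $\|z\|_{C,k}=\varphi_{-C,k}(z)$ on $C$ and hence the claim. The main obstacle is the redundancy step: verifying that the lower bound $z+\lambda k\in C$ may be dropped is exactly what forces the restriction $y\in a+C$, and it is also what makes the two infima — the Minkowski one over $\lambda>0$ and the defining one of $\varphi_{-C,k}$ over all of $\mathbb{R}$ — coincide.
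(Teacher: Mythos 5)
Your proof is correct and follows essentially the same route as the paper's: both unpack the Minkowski functional of $\lambda[-k,k]_C$ as $(C-\lambda k)\cap(\lambda k-C)$, note that the first constraint is redundant when $y-a\in C$, and use pointedness of $C$ to identify the resulting one-sided infimum over $\lambda>0$ with $\varphi_{a-C,k}(y)$, treating $y=a$ separately. The only cosmetic difference is that you route the final identification through Corollary \ref{core-fin} and Proposition \ref{A-shift} rather than computing directly with $a-C$ as the paper does.
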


\begin{proof}
Consider some $y\in a+C$.
\begin{eqnarray*}
\| y-a\| _{C,k} & = & \operatorname*{inf}\{\lambda >0\mid y-a\in \lambda ((C-k)\cap (k-C))\}\\
& = & \operatorname*{inf}\{\lambda >0\mid y-a\in  (C-\lambda k)\cap (\lambda k-C)\}\\
& = & \operatorname*{inf}\{\lambda >0\mid y-a\in  \lambda k-C\} \mbox{ since } y-a\in C\subseteq C-\lambda k\mbox{ for all }\lambda\in\mathbb{R}_+.\\
& = & \varphi_{a-C,k}(y) \mbox{ if } y\notin a-C.
\end{eqnarray*}
$(a+C)\cap (a-C)=\{a\}$ since $C$ is pointed. Hence $\| y-a\| _{C,k}=\varphi_{a-C,k}(y)\mbox{ for all } y\in a+C$ with $y\not=a$. $\| a-a\| _{C,k}=0=\varphi_{a-C,k}(a)$. 
\end{proof}

In many applications, solutions are determined by problems $\operatorname*{min}_{y\in F}\| y-a\| _{C,k}$ with $F\subseteq a+C$. Replacing $\| y-a\| _{C,k}$ by $\varphi_{a-C,k}(y)$, this approach can often be applied without the assumption $F\subseteq a+C$. This is illustrated for the scalarization of vector optimization problems with the weighted Chebyshev norm and with extensions of this norm in \cite{Wei90} and \cite{Wei94}.
\smallskip

\section{Decision Making and Vector Optimization}\label{s-decision}

Consider the following general decision problem:\\
A decision maker (DM) wants to make a decision by choosing an element from a set $S$ of feasible decisions, where the outcomes of the decisions are given by some function $f:S\to Y$.

What is a best decision depends on the DM's preferences in the set $F:=f(S)$ of outcomes.  
Let $\succ$ denote the DM's strict or weak preference relation on $Y$. Then the set of decision outcomes that are optimal for the DM is just $\operatorname*{Min}(F,\succ ):=\{y^0\in F\mid \forall y\in F: (y\succ y^0\Rightarrow y^0\succ y)\}$.

Note that $\succ$ consists of the preferences the DM is aware of. This relation is refined during the decision process, but fixed in each single step of the decision process, where information about $\operatorname*{Min}(F,\succ )$ should support the DM in formulating further preferences. In the final phase of the decision process, the DM chooses one decision, but in the previous phases $\operatorname*{Min}(F,\succ )$ contains more than one element.

\begin{definition}
Suppose $\succ$ to be a binary relation on $Y$.\\
$d\in Y$ is said to be a domination factor of $y\in Y$ if $y\succ y+d$.
We define the domination structure of $\succ$ by $D_{\succ }:Y\to {\mathcal{P}}(Y)$ with $D_{\succ }(y):=\{d\in Y\mid y\succ y+d\}$ for each $y\in Y$.
If there exists some set $D\subseteq Y$ with $D_{\succ }(y)=D$ for all $y\in Y$, then $D$ is called the domination set of $\succ$.
\end{definition}

The definition implies:

\begin{proposition}
Suppose $\succ$ to be a binary relation on $Y$, $D\subseteq Y$.\\
$D$ is a domination set of $\succ$ if and only if:
$$\forall y^1,y^2\in Y:\;\; (y^1\succ y^2\iff y^2\in y^1+D).$$
There exists a domination set of $\succ$ if and only if:
\begin{equation}\label{domset-add}
\forall y^1,y^2,y\in Y:\;\; (y^1\succ y^2\implies (y^1+y)\succ (y^2+y)).
\end{equation}
If $D$ is a domination set of $\succ$, we have:
\begin{itemize}
\item[(a)] $\succ$ is reflexive $\iff 0\in D$.
\item[(b)] $\succ$ is asymmetric $\iff D\cap (-D)=\emptyset$.
\item[(c)] $\succ$ is antisymmetric $\iff D\cap (-D)=\{0\}$.
\item[(d)] $\succ$ is transitive $\iff D+D\subseteq D$.
\item[(e)] $\succ$ fulfills the condition
\begin{equation}
\forall\, y^{1},y^{2}\in Y \;\; \forall\, \lambda \in {\mathbb{R}_>}:
\;\; y^{1} \succ y^{2} \implies (\lambda y^{1})\succ (\lambda y^{2}), \label{f217-y} 
\end{equation}
if and only if $D\cup\{0\}$ is a cone.
\item[(f)] $\succ$ is a transitive relation that fulfills condition (\ref{f217-y}) if and only if $D\cup\{0\}$ is a convex cone.
\item[(g)] $\succ$ is a partial order that fulfills condition (\ref{f217-y}) if and only if $D$ is a pointed convex cone.
\end{itemize}
\end{proposition}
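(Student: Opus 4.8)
The entire proposition runs on one translation principle, so the plan is to isolate that first. Unfolding the definition of the domination structure, ``$d$ is a domination factor of $y$'' means $y\succ y+d$, so ``$D_{\succ}(y)=D$ for every $y$'' becomes, after the substitution $d:=y^2-y^1$, exactly $y^1\succ y^2\iff y^2-y^1\in D$, i.e.\ $y^2\in y^1+D$. Both implications are immediate once one observes that as $(y^1,y^2)$ ranges over $Y\times Y$ the difference $y^2-y^1$ ranges over all of $Y$. I would record this equivalence as the first line of the proof, since it is the engine for everything that follows.

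For the existence criterion I would take the only possible candidate $D:=D_{\succ}(0)=\{d\in Y\mid 0\succ d\}$ and show that (\ref{domset-add}) forces $D_{\succ}(y)=D$ for all $y$. The idea is that (\ref{domset-add}), applied once with translation vector $z=-y$ and once with $z=y$, upgrades the stated one-way implication into the equivalence $y^1\succ y^2\iff(y^1+z)\succ(y^2+z)$; here $z=-y$ turns any instance $y\succ y+d$ into $0\succ d$, and $z=y$ turns it back, so $y\succ y+d\iff d\in D$ as required. The converse, that a domination set yields (\ref{domset-add}), is immediate from the translation principle, because $y^2-y^1$ is unchanged when the same vector is added to both arguments.

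Parts (a)--(e) are then mechanical rewritings of the relational property as a condition on $D$ via the master equivalence and the substitutions $d=y^2-y^1$ (and $d_1=y^2-y^1$, $d_2=y^3-y^2$, so that $d_1+d_2=y^3-y^1$, for transitivity). Reflexivity becomes $0\in D$ in (a); asymmetry becomes $d\in D\Rightarrow -d\notin D$, i.e.\ $D\cap(-D)=\emptyset$, in (b); antisymmetry becomes $d,-d\in D\Rightarrow d=0$ in (c); transitivity becomes $D+D\subseteq D$ in (d); and condition (\ref{f217-y}) reduces, upon substitution, to $d\in D\Rightarrow\lambda d\in D$ for all $\lambda\in\mathbb{R}_{>}$ in (e). The one nonroutine point is (e): closure under strictly positive scalars is not literally ``being a cone'', so I would adjoin the origin and check directly that $D\cup\{0\}$ satisfies $\mathbb{R}_{+}(D\cup\{0\})\subseteq D\cup\{0\}$, the case $\lambda=0$ being what makes the zero necessary.

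Parts (f) and (g) carry the real weight, and this is the step I expect to be the main obstacle. For (f) the plan is to combine (d) and (e): transitivity gives $D+D\subseteq D$ and (\ref{f217-y}) gives the cone property, and jointly these say $D\cup\{0\}$ is closed under addition and nonnegative scalars, hence a convex cone. The delicate direction is the converse, because additive closure of $C:=D\cup\{0\}$ only delivers $D+D\subseteq D\cup\{0\}$, whereas transitivity demands $D+D\subseteq D$; the gap is exactly the case $d_1+d_2=0$ with $d_1,d_2\in D$, and controlling it is the crux of the argument. This is precisely where (g) becomes clean: a partial order is reflexive, so $0\in D$ by (a) and $D=D\cup\{0\}$, and antisymmetric, so $D\cap(-D)=\{0\}$ by (c); pointedness then rules out the offending $d_1+d_2=0$ case, and (d) together with (e) assembles $D$ into a pointed convex cone and back. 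I would therefore prove (g) by stacking (a), (c), (d), (e) rather than directly, and present (f) and (g) last so that the cone-structure bookkeeping can rest on the already-established characterisations.
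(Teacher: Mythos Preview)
The paper offers no proof of this proposition; it is prefaced only by ``The definition implies:'' and the verification is left to the reader. Your reconstruction is the natural one, and your treatment of the first two claims and of parts (a)--(e) and (g) is correct.

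You are right to isolate the converse of (f) as the only nontrivial step, and your diagnosis is exact: convexity of $C:=D\cup\{0\}$ yields only $D+D\subseteq D\cup\{0\}$, while transitivity requires $D+D\subseteq D$; the residual case is $d_1+d_2=0$ with $d_1,d_2\in D$. You say ``controlling it is the crux'' and then move on without resolving it, and in fact it cannot be resolved: take $Y=\mathbb{R}$ and $D=\mathbb{R}\setminus\{0\}$. Then $D\cup\{0\}=\mathbb{R}$ is a convex cone, but the associated relation $y^1\succ y^2\iff y^1\neq y^2$ is not transitive ($0\succ 1$, $1\succ 0$, yet not $0\succ 0$). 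So the converse of (f), as printed, is false, and your unease is well founded rather than a gap in your plan. A similar but smaller slip sits in (c), which you glossed as ``$d,-d\in D\Rightarrow d=0$'': that is $D\cap(-D)\subseteq\{0\}$, not the stated equality, and indeed antisymmetry alone does not force $0\in D$. None of this affects (g), where reflexivity of a partial order gives $0\in D$, hence $D=D\cup\{0\}$, and the obstruction vanishes exactly as you argue.
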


\begin{example}
Not each preference relation fulfills the conditions (\ref{domset-add}) and (\ref{f217-y}). Consider $y$ to be the number of tea spoons full of sugar that a person puts into his coffee. He could prefer $2$ to $1$, but possibly not $2+2$ to $1+2$ or also not prefer $2\times 2$ to $2\times 1$.
\end{example}

We now introduce optimal elements w.r.t. sets as a tool for finding optimal elements w.r.t. relations. 

\begin{definition}
Suppose $F, D\subseteq Y$.
An element $y^{0}\in F$ is called an efficient element of $F$
w.r.t. $D$ iff 
\[
F\cap(y^0-D)\subseteq \{y^0\}.
\]
We denote the set of efficient elements of $F$ w.r.t. $D$ by $\operatorname*{Eff}(F,D).$
\end{definition}

We get \cite[p.51]{Wei85}:

\begin{proposition}\label{p-domstruct-D}
Suppose $\succ$ to be a (not necessarily strict) preference relation on $Y$ with domination structure $D{_\succ }$, $D\subseteq Y$.
\begin{itemize}
\item[(a)] If $D{_\succ }(y)=D\mbox{ for all } y\in F$, then $\operatorname*{Min}(F,\succ)=\operatorname*{Eff}(F,D\setminus (-D))$.\\
If $\succ$ is asymmetric or antisymmetric, then $\operatorname*{Min}(F,\succ)=\operatorname*{Eff}(F,D)$. 
\item[(b)] If $D{_\succ }(y)\subseteq D\mbox{ for all } y\in F$, then $\operatorname*{Eff}(F,D)\subseteq\operatorname*{Min}(F,\succ)$.
\item[(c)] If $\succ$ is asymmetric or antisymmetric and $D\subseteq D{_\succ }(y)\mbox{ for all } y\in F$, then\\
$\operatorname*{Min}(F,\succ)\subseteq \operatorname*{Eff}(F,D)$.
\end{itemize}
\end{proposition}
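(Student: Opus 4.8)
The plan is to reduce the entire statement to elementary bookkeeping about the sets $D_\succ(y)$ and $D$ by first setting up a dictionary between the relation $\succ$ and set membership. For any $y,y^0\in Y$, writing $y^0=y+(y^0-y)$ in the definition $D_\succ(y)=\{d\mid y\succ y+d\}$ gives the equivalence $y\succ y^0\iff y^0-y\in D_\succ(y)$. On the other side, $y^0\in\operatorname*{Eff}(F,D)$ says precisely that every $y\in F$ with $y^0-y\in D$ satisfies $y=y^0$. Once both sides are phrased this way, each of (a)--(c) becomes a short logical manipulation, which I would carry out for the four assertions in turn.

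For part (a), under the hypothesis $D_\succ(y)=D$ on $F$ the condition $y^0\in\operatorname*{Min}(F,\succ)$ reads: for every $y\in F$, $y^0-y\in D\Rightarrow y-y^0\in D$; the case $y=y^0$ is the tautology $0\in D\Rightarrow 0\in D$ and imposes nothing. Rewriting the implication as the disjunction ``$y^0-y\notin D$ or $y-y^0\in D$'' and comparing it, for $y\ne y^0$, with the negation of $y^0-y\in D\setminus(-D)$ shows that the two conditions are literally identical, giving $\operatorname*{Min}(F,\succ)=\operatorname*{Eff}(F,D\setminus(-D))$. For the second equality I would add the extra hypothesis: if $y\ne y^0$ and $y^0-y\in D\cap(-D)$, then $y\succ y^0$ and $y^0\succ y$ both hold, which asymmetry forbids outright and antisymmetry forces $y=y^0$, a contradiction; hence no off-diagonal difference lies in $D\cap(-D)$, so $D\setminus(-D)$ may be replaced by $D$ and $\operatorname*{Min}(F,\succ)=\operatorname*{Eff}(F,D)$.

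Parts (b) and (c) are then direct inclusion chases using the dictionary. For (b), take $y^0\in\operatorname*{Eff}(F,D)$ and $y\in F$ with $y\succ y^0$; then $y^0-y\in D_\succ(y)\subseteq D$ forces $y\in F\cap(y^0-D)\subseteq\{y^0\}$, so $y=y^0$ and the required $y^0\succ y$ holds because it coincides with the assumed $y\succ y^0$. For (c), take $y^0\in\operatorname*{Min}(F,\succ)$ and any $y\in F\cap(y^0-D)$; since $D\subseteq D_\succ(y)$ we get $y\succ y^0$, minimality yields $y^0\succ y$, and then asymmetry makes the pair impossible (so $F\cap(y^0-D)=\emptyset$) while antisymmetry forces $y=y^0$; either way $F\cap(y^0-D)\subseteq\{y^0\}$, i.e. $y^0\in\operatorname*{Eff}(F,D)$.

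The genuinely delicate point, and where I would concentrate, is part (a): one must keep track of the mismatch between the quantifiers, since $\operatorname*{Min}$ ranges over all $y\in F$ whereas efficiency only constrains $y\ne y^0$, and one must recognize that in the absence of any extra hypothesis it is $D\setminus(-D)$, not $D$, that is the correct domination set. After that observation the role of asymmetry and antisymmetry is just to guarantee that no difference $y^0-y$ with $y\ne y^0$ lies in $D\cap(-D)$, and the remaining arguments are routine once the $\succ$-to-$D_\succ$ translation is fixed.
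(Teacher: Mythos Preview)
Your proof is correct and follows essentially the same approach as the paper: both translate $y\succ y^0$ into $y^0-y\in D_\succ(y)$ and then reduce each item to elementary set manipulations. The only cosmetic differences are that the paper argues (b) and (c) by contrapositive, and in the second part of (a) it invokes $D\cap(-D)\subseteq\{0\}$ directly from asymmetry/antisymmetry to obtain $\operatorname*{Eff}(F,D\setminus(-D))=\operatorname*{Eff}(F,D)$, whereas your argument---that no off-diagonal difference $y^0-y$ with $y,y^0\in F$ can lie in $D\cap(-D)$---is slightly more careful, since the hypothesis only fixes $D_\succ$ on $F$.
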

{\it Proof}
\begin{itemize}
\item[(a)] Consider some $y^0\in F$.
\begin{eqnarray*}
y^0\notin \operatorname*{Min}(F,\succ) & \Leftrightarrow & \exists y\in F:\;y\succ y^0,\mbox{ but } \neg (y^0\succ y),\\
& \Leftrightarrow & \exists y\in F:\;y^0\in y+D,\mbox{ but } y\notin y^0+D,\\
& \Leftrightarrow & \exists y\in F:\;y^0\in y+(D\setminus (-D))\\
& \Leftrightarrow & y^0\notin \operatorname*{Eff}(F,D\setminus (-D)).
\end{eqnarray*}
If $\succ$ is asymmetric or antisymmetric, then $D\cap(-D)\subseteq\{0\}$. $\Rightarrow D\setminus (-D)=D$ or $D\setminus (-D)=D\setminus\{0\}$. $\Rightarrow \operatorname*{Eff}(F,D\setminus (-D))=\operatorname*{Eff}(F,D)$.
\item[(b)] Consider some $y^0\in F\setminus \operatorname*{Min}(F,\succ)$.
$\Rightarrow \exists y\in F:\;y\succ y^0,\mbox{ but } \neg (y^0\succ y)$.
$\Rightarrow \exists y\in F\setminus\{y^0\}:\; y^0\in y+D{_\succ }(y)$.
$\Rightarrow \exists y\in F\setminus\{y^0\}:\; y\in y^0-D{_\succ }(y)\subseteq y^0-D$. 
$\Rightarrow y^0\notin \operatorname*{Eff}(F,D)$.
\item[(c)] Consider some $y^0\in F\setminus \operatorname*{Eff}(F,D)$.
$\Rightarrow \exists y\in F\setminus\{y^0\}:\; y\in y^0-D\subseteq y^0-D{_\succ }(y)$. 
$\Rightarrow \exists y\in F\setminus\{y^0\}:\; y^0\in y+D{_\succ }(y)$.
$\Rightarrow \exists y\in F\setminus\{y^0\}:\;y\succ y^0$.
$\Rightarrow y^0\notin \operatorname*{Min}(F,\succ)$, since $\succ$ is asymmetric or antisymmetric.
\qed
\end{itemize}

\begin{remark}
In \cite{Wei83} and \cite{Wei85}, optimal elements of sets w.r.t. relations and sets were investigated under the general assumptions given here. There exist earlier papers that study optima w.r.t. quasi orders, e.g. \cite{Wall45} and \cite{Ward54}, or optimal elements w.r.t. ordering cones, e.g. \cite{hurw58} and \cite{yu73dom}. The concept of domination structures goes back to Yu \cite{yu73dom}. Domination factors according to the above definition were introduced in \cite{Berg76},
where minimal elements w.r.t. convex sets $D$ with $0\in D\setminus\operatorname*{int}D$ were investigated in $\mathbb{R}^\ell$.   
\end{remark}

The domination factors refer to elements that are dominated. Of course, a structure could also be built by dominating elements. Such a structure was studied by Chen \cite{Chen92} and later in the books by himself et al. \cite{CheHuaYan05} for the case that the structure consists of convex cones or of convex sets that contain zero in their boundary.

\begin{definition}
Suppose $\succ$ to be a binary relation on $Y$.\\
$\tilde{d}\in Y$ is said to be a pre-domination factor of $y\in Y$ if $\;y-\tilde{d}\succ y$.
We define the pre-domination structure of $\succ$ by $\tilde{D}_{\succ } : Y\to {\mathcal{P}}(Y)$ with $\tilde{D}_{\succ }(y):=\{\tilde{d}\in Y\mid y-\tilde{d}\succ y\}$ for each $y\in Y$.
\end{definition}

A pre-domination structure is constant on the entire space if and only if the domination structure of the same relation is constant on the whole space.

\begin{proposition}
Suppose $\succ$ to be a (not necessarily strict) preference relation on $Y$ with pre-domination structure $\tilde{D}_{\succ }$.
There exists a domination set $D\subseteq Y$ of $\succ$ if and only if $\tilde{D}_{\succ }(y)=D\mbox{ for all } y\in Y$.
\end{proposition}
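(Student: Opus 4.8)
The plan is to reduce both directions to a single change-of-variable identity relating the pre-domination structure $\tilde D_\succ$ to the domination structure $D_\succ$. First I would record, straight from the definitions, the two reciprocal equivalences that drive everything. For every $y,\tilde d\in Y$,
\[
\tilde d\in\tilde D_\succ(y)\iff y-\tilde d\succ y\iff \tilde d\in D_\succ(y-\tilde d),
\]
where the last step uses that $(y-\tilde d)+\tilde d=y$, so that $y-\tilde d\succ y$ is exactly the statement $\tilde d\in D_\succ(y-\tilde d)$. Symmetrically, for every $y,d\in Y$,
\[
d\in D_\succ(y)\iff y\succ y+d\iff d\in\tilde D_\succ(y+d),
\]
since $(y+d)-d\succ(y+d)$ unwinds to $y\succ y+d$. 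These two equivalences are the entire engine of the proof.

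For the forward implication I would assume that $D$ is a domination set of $\succ$, i.e.\ $D_\succ(z)=D$ for all $z\in Y$. Fixing an arbitrary $y$ and applying the first equivalence, I get $\tilde d\in\tilde D_\succ(y)$ if and only if $\tilde d\in D_\succ(y-\tilde d)=D$, whence $\tilde D_\succ(y)=D$; as $y$ was arbitrary, $\tilde D_\succ(y)=D$ for all $y$. For the reverse implication I would assume $\tilde D_\succ(y)=D$ for all $y\in Y$, fix an arbitrary $y$, and apply the second equivalence: $d\in D_\succ(y)$ if and only if $d\in\tilde D_\succ(y+d)=D$, so $D_\succ(y)=D$ for every $y$, which is precisely the assertion that $D$ is a domination set of $\succ$.

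The only delicate point, and the step I would write out most carefully, is that in each equivalence the structure must be evaluated at the \emph{shifted} argument ($y-\tilde d$ in one direction, $y+d$ in the other) rather than at $y$ itself; once that substitution is made explicit, the rest is a direct unwinding of definitions. It is worth noting that the argument uses no reflexivity, transitivity, or homogeneity of $\succ$, so the equivalence in fact holds for an arbitrary binary relation, which is why the hypothesis can be weakened to a not-necessarily-strict preference relation without affecting the proof.
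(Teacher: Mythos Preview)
Your proof is correct and follows essentially the same approach as the paper: both arguments rest on the change of variable $y\mapsto y\pm d$ linking the defining conditions $y\succ y+d$ and $y-d\succ y$. The paper performs this substitution at the level of the universal quantifier (rewriting $\forall y:\,(y\succ y+d\Leftrightarrow d\in D)$ as $\forall y:\,(y-d\succ y\Leftrightarrow d\in D)$), whereas you carry it out pointwise via the identities $\tilde d\in\tilde D_\succ(y)\iff\tilde d\in D_\succ(y-\tilde d)$ and $d\in D_\succ(y)\iff d\in\tilde D_\succ(y+d)$; this is a difference in presentation only.
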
 

\begin{proof}
Let $D_{\succ }$ denote the domination structure of $\succ$.\\
$D_{\succ }(y)=D$ holds for all $y\in Y$ if and only if:\\
$\forall y\in Y:\;(y\succ y+d\Leftrightarrow d\in D)$, i.e., if and only if\\
$\forall y\in Y:\;(y-d\succ y\Leftrightarrow d\in D)$, which is equivalent to $\tilde{D}_{\succ }(y)=D\mbox{ for all } y\in Y$.
\end{proof}

The pre-domination structure may consist of convex sets when this is not the case for the domination structure.

\begin{example}\label{ex-predom}
Define on $Y=\mathbb{R}^2$ the relation $\succ$ by: $y^1\succ y^2\Leftrightarrow \| y^1\| _2\leq \| y^2\| _2$.
The pre-domination structure, but not the domination structure,  consists of convex sets. 
\end{example}

Analogously to Proposition \ref{p-domstruct-D}, the following relationships between pre-domination structures and optima w.r.t. sets hold.

\begin{proposition}
Suppose $\succ$ to be a (not necessarily strict) preference relation on $Y$ with pre-domination structure $\tilde{D}_{\succ }$, $D\subseteq Y$.
\begin{itemize}
\item[(a)] If $\tilde{D}_{\succ }(y)=D\mbox{ for all } y\in F$, then $\operatorname*{Min}(F,\succ)=\operatorname*{Eff}(F,D\setminus (-D))$.\\
If $\succ$ is asymmetric or antisymmetric, then $\operatorname*{Min}(F,\succ)=\operatorname*{Eff}(F,D)$. 
\item[(b)] If $\tilde{D}_{\succ }(y)\subseteq D\mbox{ for all } y\in F$, then $\operatorname*{Eff}(F,D)\subseteq\operatorname*{Min}(F,\succ)$.
\item[(c)] If $\succ$ is asymmetric or antisymmetric and $D\subseteq \tilde{D}_{\succ }(y)\mbox{ for all } y\in F$, then\\ $\operatorname*{Min}(F,\succ)\subseteq \operatorname*{Eff}(F,D)$.
\end{itemize}
\end{proposition}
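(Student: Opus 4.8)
The plan is to mirror the proof of Proposition \ref{p-domstruct-D}, exploiting the defining property of the pre-domination structure in the rewritten form
\[
a\succ b\iff b-a\in\tilde{D}_{\succ }(b)\qquad\mbox{for all } a,b\in Y,
\]
which is nothing but $\tilde{D}_{\succ }(b)=\{\tilde d\mid b-\tilde d\succ b\}$ read with $\tilde d=b-a$. The only structural difference from the domination case is that the relevant set is attached to the \emph{dominated} element instead of the dominating one; since in each assertion both compared elements lie in $F$, the hypotheses on $\tilde{D}_{\succ }$ are available at every point that is needed.

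For part (a) I would fix $y^0\in F$ and translate $y^0\notin\operatorname*{Min}(F,\succ)$ into the existence of some $y\in F$ with $y\succ y^0$ and $\neg(y^0\succ y)$. Using $\tilde{D}_{\succ }(y^0)=D$ I would rewrite $y\succ y^0$ as $y^0-y\in D$, and using $\tilde{D}_{\succ }(y)=D$ I would rewrite $\neg(y^0\succ y)$ as $y-y^0\notin D$; together these give $y^0-y\in D\setminus(-D)$, i.e. $y\in y^0-(D\setminus(-D))$, which is exactly the negation of $y^0\in\operatorname*{Eff}(F,D\setminus(-D))$. Each implication is an equivalence, so this yields $\operatorname*{Min}(F,\succ)=\operatorname*{Eff}(F,D\setminus(-D))$. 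For the second assertion I would note that if $\succ$ is asymmetric (resp. antisymmetric), then no pair $y,y^0\in F$ with $y\neq y^0$ can satisfy $y^0-y\in D\cap(-D)$: such a pair would produce both $y\succ y^0$ and $y^0\succ y$, contradicting asymmetry (resp. forcing $y=y^0$). Hence $D\setminus(-D)$ and $D$ determine the same efficient set on $F$, giving $\operatorname*{Min}(F,\succ)=\operatorname*{Eff}(F,D)$.

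Parts (b) and (c) follow the same one-directional chains. For (b) I would take $y^0\in F\setminus\operatorname*{Min}(F,\succ)$, produce $y\in F$ with $y\succ y^0$ and $\neg(y^0\succ y)$ (whence $y\neq y^0$), and invoke the inclusion $\tilde{D}_{\succ }(y^0)\subseteq D$ at the point $y^0\in F$ to get $y^0-y\in D$, i.e. $y\in(y^0-D)\cap F$ with $y\neq y^0$, so $y^0\notin\operatorname*{Eff}(F,D)$. For (c) I would start from $y^0\in F\setminus\operatorname*{Eff}(F,D)$, obtain $y\in F\setminus\{y^0\}$ with $y\in y^0-D$, and use $D\subseteq\tilde{D}_{\succ }(y^0)$ to conclude $y^0-y\in\tilde{D}_{\succ }(y^0)$, i.e. $y\succ y^0$; asymmetry or antisymmetry then excludes $y^0\succ y$, so $y^0\notin\operatorname*{Min}(F,\succ)$.

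I expect the only genuinely delicate point to be the consistent bookkeeping of which element carries the pre-domination set: because $\tilde{D}_{\succ }$ sits at the dominated element, in (b) the bound must be applied at $y^0$ (rather than at the comparison element $y$, as in the domination analogue), and likewise in (c). Verifying that $y^0\in F$ in precisely these steps is what makes the hypotheses applicable and is the main thing to check carefully; the remainder is the same routine equivalence manipulation as in Proposition \ref{p-domstruct-D}.
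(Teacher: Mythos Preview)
Your proposal is correct and follows essentially the same contrapositive bookkeeping as the paper: translate $y\succ y^0$ and $\neg(y^0\succ y)$ via the pre-domination sets at the appropriate points in $F$, and run the same chains as in Proposition~\ref{p-domstruct-D}. Your treatment of the second clause of (a), arguing directly that no pair $y\neq y^0$ in $F$ can have $y^0-y\in D\cap(-D)$ under asymmetry/antisymmetry, is slightly more careful than the paper's global claim $D\cap(-D)\subseteq\{0\}$ (which, strictly speaking, uses more than the hypothesis on $F$), but the underlying idea is the same.
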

{\it Proof}
\begin{itemize}
\item[(a)] Consider some $y^0\in F$.
\begin{eqnarray*}
y^0\notin \operatorname*{Min}(F,\succ) & \Leftrightarrow & \exists y\in F:\;y\succ y^0,\mbox{ but } \neg (y^0\succ y),\\
& \Leftrightarrow & \exists y\in F:\;y\in y^0-D,\mbox{ but } y^0\notin y-D,\\
& \Leftrightarrow & \exists y\in F:\;y^0\in y+(D\setminus (-D))\\
& \Leftrightarrow & y^0\notin \operatorname*{Eff}(F,D\setminus (-D)).
\end{eqnarray*}
If $\succ$ is asymmetric or antisymmetric, then $D\cap(-D)\subseteq\{0\}$. $\Rightarrow D\setminus (-D)=D\setminus\{0\}$. $\Rightarrow \operatorname*{Eff}(F,D\setminus (-D))=\operatorname*{Eff}(F,D)$.
\item[(b)] Consider some $y^0\in F\setminus \operatorname*{Min}(F,\succ)$.
$\Rightarrow \exists y\in F:\;y\succ y^0,\mbox{ but } \neg (y^0\succ y)$.
$\Rightarrow \exists y\in F\setminus\{y^0\}:\; y\in y^0-\tilde{D}_{\succ }(y^0)\subseteq y^0-D$.
$\Rightarrow y^0\notin \operatorname*{Eff}(F,D)$.
\item[(c)] Consider some $y^0\in F\setminus \operatorname*{Eff}(F,D)$.
$\Rightarrow \exists y\in F\setminus\{y^0\}:\; y\in y^0-D\subseteq y^0-\tilde{D}_{\succ }(y^0)$. 
$\Rightarrow \exists y\in F\setminus\{y^0\}:\;y\succ y^0$.
$\Rightarrow y^0\notin \operatorname*{Min}(F,\succ)$, since $\succ$ is asymmetric or antisymmetric.
\qed
\end{itemize}

\smallskip
Since the domination structure as well as the pre-domination structure completely characterize the binary relation, the minimal point set w.r.t. the relation can be described via the domination structure or via the pre-domination structure. 

\begin{lemma}\label{l-minrel-mindom}
Suppose $\succ$ to be a binary relation on $Y$ with domination structure $D_{\succ }$ and pre-domination structure $\tilde{D}_{\succ }$, $F\subseteq Y$.
\begin{eqnarray*}
\operatorname*{Min}(F,\succ) &= &\{y^0\in F\mid \forall y\in F:\;(y^0\in y+D_{\succ }(y)\Rightarrow y\in y^0+D_{\succ }(y^0))\}\\
& = & \{y^0\in F\mid \forall y\in F:\;(y\in y^0-\tilde{D}_{\succ }(y^0)\Rightarrow y^0\in y-\tilde{D}_{\succ }(y))\}.
\end{eqnarray*}
If $\succ$ is asymmetric or antisymmetric, we get
\begin{eqnarray*}
\operatorname*{Min}(F,\succ) &= &\{y^0\in F\mid \not\exists y\in F\setminus\{y^0\}:\;y^0\in y+D_{\succ }(y)\}\\
& = & \{y^0\in F\mid \not\exists y\in F\setminus\{y^0\}:\;y\in y^0-\tilde{D}_{\succ }(y^0)\}.
\end{eqnarray*}
\end{lemma}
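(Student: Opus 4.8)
The plan is to reduce everything to four elementary translations between the relation $\succ$ and its two structures, and then to substitute these into the definition $\operatorname*{Min}(F,\succ)=\{y^0\in F\mid \forall y\in F:\ (y\succ y^0\Rightarrow y^0\succ y)\}$.

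First I would record the dictionary. Unwinding $D_{\succ}(y)=\{d\in Y\mid y\succ y+d\}$ gives, for all $y,y^0\in Y$,
\[
y\succ y^0\iff y^0-y\in D_{\succ}(y)\iff y^0\in y+D_{\succ}(y),
\]
and, with the roles of the two points reversed,
\[
y^0\succ y\iff y\in y^0+D_{\succ}(y^0).
\]
Likewise, unwinding $\tilde{D}_{\succ}(y)=\{\tilde d\in Y\mid y-\tilde d\succ y\}$ (so that $y\succ y^0$ means $y^0-(y^0-y)\succ y^0$, with shift $y^0-y$) yields
\[
y\succ y^0\iff y^0-y\in \tilde{D}_{\succ}(y^0)\iff y\in y^0-\tilde{D}_{\succ}(y^0),
\]
\[
y^0\succ y\iff y^0\in y-\tilde{D}_{\succ}(y).
\]

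Next I would substitute. Replacing $y\succ y^0$ and $y^0\succ y$ in the definition of $\operatorname*{Min}(F,\succ)$ by their $D_{\succ}$-expressions gives the first displayed characterization, and replacing them by their $\tilde{D}_{\succ}$-expressions gives the second; this settles the first two equalities with no further work.

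For the asymmetric/antisymmetric case, the key observation is that the implication $(y\succ y^0\Rightarrow y^0\succ y)$ collapses to the negation of its antecedent once $y\neq y^0$. Indeed, if $y\neq y^0$ and $y\succ y^0$, the implication would force $y^0\succ y$; but asymmetry forbids $y\succ y^0$ and $y^0\succ y$ simultaneously, while antisymmetry forces $y=y^0$ — either way a contradiction. Hence for $y\neq y^0$ the implication is equivalent to $\neg(y\succ y^0)$, and for $y=y^0$ it holds trivially. Therefore
\[
\operatorname*{Min}(F,\succ)=\{y^0\in F\mid \not\exists y\in F\setminus\{y^0\}:\ y\succ y^0\},
\]
and feeding in the two expressions for $y\succ y^0$ from the dictionary produces the last two equalities.

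The translations are routine bookkeeping, so I expect the only genuine step to be the reduction in the asymmetric/antisymmetric case: one must treat $y=y^0$ separately (asymmetry yields irreflexivity and hence a false antecedent, whereas an antisymmetric relation may be reflexive, in which case the implication still holds with equal antecedent and consequent) and invoke the correct property in each case. The other place to be careful is keeping the plus/minus signs consistent across the $D_{\succ}$- and $\tilde{D}_{\succ}$-dictionaries.
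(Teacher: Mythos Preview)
Your proof is correct. The paper actually states this lemma without proof, treating it as an immediate consequence of the definitions; your argument is precisely the routine unpacking one would supply, and the care you take with the $y=y^0$ case in the asymmetric/antisymmetric part is appropriate.
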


In the case that the domination structure can be described by a domination set, the decision problem becomes a vector optimization problem, which we will study in the next sections.
\smallskip

\section{Basic Properties of the Efficient and the Weakly Efficient Point Set in Vector Optimization}\label{s-basics-vo}

In this section, we will define the vector optimization problem and prove some basic properties of its solutions. We will show in which way minimal solutions of scalar-valued functions can deliver solutions to the vector optimization problem.

The vector optimization problem is given by a function $f:S\rightarrow Y$, mapping a set $S$ into $Y$, and a subset $D\not= Y$ of $\,Y$ that defines the solution concept. A solution of the vector optimization problem is each $s\in S$ with $f(s)\in \operatorname*{Eff}(f(S),D)$. 

Hence we are interested in the efficient elements of $F:=f(S)$ w.r.t. $D$. One can imagine that for each $y^0\in F$ the set of elements in $F$ that is preferred to $y^0$ is just $F\cap (y^0-(D\setminus\{0\}))$.
We will call $D$ the domination set of the vector optimization problem.

\begin{remark}
Weidner (\cite{Wei83}, \cite{Wei85}, \cite{Wei90}) studied vector optimization problems under such general assumptions motivated by decision theory. Here, we only refer to a part of those results. If $D$ is an ordering cone in $Y$, $\operatorname*{Eff}(F,D)$ is the set of elements of $F$ that are minimal w.r.t. the cone order $\leq_D$. In the literature, vector optimization problems are usually defined with domination sets that are ordering cones.
\end{remark}

It turns out that, in general, it is easier to determine efficient elements w.r.t. the core of sets.
\begin{definition}
$\operatorname*{WEff}(F,D):=\operatorname*{Eff}(F,\operatorname*{core}D)$ is said to be the set of 
weakly efficient elements of $F$ w.r.t. $D$.
\end{definition}

Here, weak efficiency is introduced using the algebraic interior (cp. \cite{Jah86a}), since we do not want to assume that $D$ has a nonempty interior in some topological vector space. In topological vector spaces, weak efficiency is usually defined with the topological interior instead of the core.

We will first show some basic properties of efficient and weakly efficient point sets. These statements include relationships between the efficient point set of $F$ and the efficient point set of $F+D$. In applications, $F+D$ may be algebraically closed or convex though $F$ does not have this property. 

\begin{lemma}\label{l-Eff-prop}
\begin{itemize}
\item[]
\item[(a)] $\operatorname*{Eff}(F,D)=\operatorname*{Eff}(F,D\cup\{0\})=\operatorname*{Eff}(F,D\setminus\{0\})$.
\item[(b)] $D_1\subseteq D\implies \operatorname*{Eff}(F,D)\subseteq \operatorname*{Eff}(F,D_1)$.
\item[(c)] $F_1\subseteq F\implies \operatorname*{Eff}(F,D)\cap F_1\subseteq \operatorname*{Eff}(F_1,D)$.
\item[(d)] Suppose $F\subseteq A\subseteq F+(D\cup\{0\})$.
Then $\operatorname*{Eff}(A,D)\subseteq \operatorname*{Eff}(F,D)$.\\
If, additionally, $D\cap (-D)\subseteq \{0\}$ and $D+D\subseteq D$, then\\
$\operatorname*{Eff}(A,D)=\operatorname*{Eff}(F,D)$.
\item[(e)] If $D+D\subseteq D$, then $\operatorname*{Eff}(F\cap (y-D),D)=\operatorname*{Eff}(F,D)\cap (y-D)\mbox{ for all } y\in Y$.
\end{itemize}
\end{lemma}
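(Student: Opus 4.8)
The plan is to reduce every part to the defining condition $F\cap(y^0-D)\subseteq\{y^0\}$ and to argue by elementary set inclusions, bringing in the structural hypotheses on $D$ only where they are genuinely needed. For (a) I would observe that $y^0-(D\cup\{0\})=(y^0-D)\cup\{y^0\}$ and, when $0\in D$, $y^0-(D\setminus\{0\})=(y^0-D)\setminus\{y^0\}$; since $y^0\in F$, intersecting with $F$ and demanding containment in $\{y^0\}$ is insensitive to whether $y^0$ itself lies in the shifted set, so the three conditions coincide. Parts (b) and (c) are immediate monotonicity remarks: $D_1\subseteq D$ gives $F\cap(y^0-D_1)\subseteq F\cap(y^0-D)$, and $F_1\subseteq F$ gives $F_1\cap(y^0-D)\subseteq F\cap(y^0-D)$, so in each case the efficiency condition is inherited in the claimed direction.

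For the inclusion $\operatorname*{Eff}(A,D)\subseteq\operatorname*{Eff}(F,D)$ in (d), I would take $y^0\in\operatorname*{Eff}(A,D)$ and use $A\subseteq F+(D\cup\{0\})$ to write $y^0=f+d$ with $f\in F\subseteq A$ and $d\in D\cup\{0\}$, so that $f=y^0-d\in y^0-(D\cup\{0\})$. By part (a), $y^0\in\operatorname*{Eff}(A,D\cup\{0\})$, hence $f\in A\cap(y^0-(D\cup\{0\}))\subseteq\{y^0\}$, forcing $f=y^0$ and thus $y^0\in F$; then $F\subseteq A$ yields $F\cap(y^0-D)\subseteq A\cap(y^0-D)\subseteq\{y^0\}$, so $y^0\in\operatorname*{Eff}(F,D)$. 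For the equality under $D\cap(-D)\subseteq\{0\}$ and $D+D\subseteq D$, I would take $y^0\in\operatorname*{Eff}(F,D)$ and any $a\in A\cap(y^0-D)$, write $a=f+d$ with $f\in F$, $d\in D\cup\{0\}$, and compute $y^0-f=(y^0-a)+d$. When $d\in D$ this lies in $D+D\subseteq D$, so $f\in F\cap(y^0-D)\subseteq\{y^0\}$ gives $f=y^0$, and then $y^0-a=-d$ together with $y^0-a\in D$ yields $y^0-a\in D\cap(-D)\subseteq\{0\}$, i.e.\ $a=y^0$; when $d=0$ we get $a=f\in F\cap(y^0-D)\subseteq\{y^0\}$ directly. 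Hence $A\cap(y^0-D)\subseteq\{y^0\}$ and $y^0\in\operatorname*{Eff}(A,D)$.

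For (e), the inclusion $\operatorname*{Eff}(F,D)\cap(y-D)\subseteq\operatorname*{Eff}(F\cap(y-D),D)$ is just part (c) applied with $F_1=F\cap(y-D)$, using $\operatorname*{Eff}(F,D)\subseteq F$ so that $\operatorname*{Eff}(F,D)\cap(y-D)=\operatorname*{Eff}(F,D)\cap F_1$. The reverse inclusion is where $D+D\subseteq D$ does the real work: given $y^0\in\operatorname*{Eff}(F\cap(y-D),D)$, so that $y^0\in F$ and $y-y^0\in D$, and any $f\in F\cap(y^0-D)$, I would add the memberships $y-y^0\in D$ and $y^0-f\in D$ to get $y-f\in D+D\subseteq D$; hence $f\in F\cap(y-D)$, and efficiency of $y^0$ in that set forces $f=y^0$. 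Thus $F\cap(y^0-D)\subseteq\{y^0\}$, giving $y^0\in\operatorname*{Eff}(F,D)\cap(y-D)$.

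I expect the main obstacle to be the two places where transitivity and antisymmetry must be chained, namely the equality half of (d) and the nontrivial inclusion in (e): there one must combine two statements of the form ``$\,\cdot\,\in D$'' into a single one via $D+D\subseteq D$, and in (d) additionally invoke $D\cap(-D)\subseteq\{0\}$ to eliminate the degenerate coincidence; the remaining parts reduce to routine bookkeeping with shifted sets.
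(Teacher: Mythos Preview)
Your proposal is correct and follows essentially the same route as the paper. Parts (a)--(c) are declared immediate in the paper as well; in (d) and (e) the paper argues by contradiction where you argue directly, but the chain of inclusions is identical: decompose $a=f+d$, use $D+D\subseteq D$ to force $f\in F\cap(y^0-D)\subseteq\{y^0\}$, and then in (d) invoke $D\cap(-D)\subseteq\{0\}$ to conclude $a=y^0$.
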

{\it Proof}
\begin{itemize}
\item[(a)]-(c) follow immediately from the definition of efficient elements.
\item[(d)] Consider some $y^0\in \operatorname*{Eff}(A,D)$.
Since $A\subseteq F+(D\cup\{0\})$, there exist $y^1\in F$, $d\in D\cup\{0\}$ with $y^0=y^1+d$.
$\Rightarrow y^1\in y^0-(D\cup\{0\})$. $\Rightarrow y^1=y^0$ because of $y^0\in \operatorname*{Eff}(A,D)$ and $F\subseteq A$.
$\Rightarrow y^0\in F$. Hence $\operatorname*{Eff}(A,D)\subseteq F$.
$\operatorname*{Eff}(A,D)\subseteq \operatorname*{Eff}(F,D)$ follows from (c), since $F\subseteq A$.\\
Assume now $D\cap (-D)\subseteq \{0\}$ and $D+D\subseteq D$. Suppose that $\operatorname*{Eff}(A,D)\not=\operatorname*{Eff}(F,D)$. $\Rightarrow \exists y^0\in\operatorname*{Eff}(F,D)\setminus\operatorname*{Eff}(A,D)$.
$\Rightarrow \exists a\in A:\;a\in y^0-(D\setminus\{0\})$.
$ A\subseteq F+(D\cup\{0\}) \Rightarrow \exists y\in F, d\in D\cup\{0\} :\;y+d\in y^0-(D\setminus\{0\})$.
$\Rightarrow y^0-y\in d+(D\setminus\{0\})\subseteq D$. $\Rightarrow y^0=y$, since $y^0\in\operatorname*{Eff}(F,D)$.
Then $y+d\in y^0-(D\setminus\{0\})$ implies $d\in -(D\setminus\{0\})$, a contradiction to $D\cap (-D)\subseteq \{0\}$. 
\item[(e)] Choose some $y\in Y$.
$\operatorname*{Eff}(F\cap (y-D),D)\supseteq\operatorname*{Eff}(F,D)\cap (y-D)$ results from (c).
Assume there exists some $y^0\in\operatorname*{Eff}(F\cap (y-D),D)\setminus\operatorname*{Eff}(F,D)$.
$\Rightarrow (y^0-D)\cap (F\cap (y-D))\subseteq \{y^0\}$ and $\exists y^1\not=y^0:\; y^1\in F\cap(y^0-D)$.
$y^1\in y^0-D\subseteq (y-D)-D\subseteq y-D$. $\Rightarrow y^1\in (y^0-D)\cap (F\cap (y-D))$, a contradiction.
\qed
\end{itemize}

\begin{remark}
The statements of Lemma \ref{l-Eff-prop} were proved in \cite{Wei83} and \cite{Wei85}.
$\operatorname*{Eff}(F+D,D)\subseteq \operatorname*{Eff}(F,D)$ was shown before in $Y=\mathbb{R}^\ell$ by Bergstresser et al. \cite[Lemma 2.2]{Berg76} for convex sets $D$ with $0\in D\setminus \operatorname*{int}D$, by Sawaragi, Nakayama and Tanino 
\cite[Prop. 3.1.2]{sana85} for cones $D$. Vogel \cite[Satz 6]{Vog75} proved
$\operatorname*{Eff}(F+D,D)= \operatorname*{Eff}(F,D)$ for pointed convex cones $D$.
\end{remark}

Yu \cite[p.20]{yu74} gave an example for a convex cone that is not pointed and for that $\operatorname*{Eff}(F,D)$ is not a subset of $\operatorname*{Eff}(F+D,D)$.

We get for weakly efficient elements:

\begin{lemma}\label{l-wEff-prop}
\begin{itemize}
\item[]
\item[(a)] $\operatorname*{Eff}(F,D)\subseteq \operatorname*{WEff}(F,D)$.
\item[(b)] $\operatorname*{core}D_1\subseteq \operatorname*{core}D\implies \operatorname*{WEff}(F,D)\subseteq \operatorname*{WEff}(F,D_1)$.
\item[(c)] $F_1\subseteq F\implies \operatorname*{WEff}(F,D)\cap F_1\subseteq \operatorname*{WEff}(F_1,D)$.
\item[(d)] Suppose $F\subseteq A\subseteq F\cup (F+\operatorname*{core}D)$.
Then $\operatorname*{WEff}(A,D)\subseteq \operatorname*{WEff}(F,D)$.\\
If, additionally, $\operatorname*{core}D\cap (-\operatorname*{core}D)\subseteq \{0\}$ and $\operatorname*{core}D+\operatorname*{core}D\subseteq \operatorname*{core}D$, then
$\operatorname*{WEff}(A,D)=\operatorname*{WEff}(F,D)$.
\item[(e)] Suppose $F\subseteq A\subseteq F+(D\cup\{0\})$, $0\notin\operatorname*{core}D$ and $D+\operatorname*{core}D\subseteq \operatorname*{core}D$.\\
Then $\operatorname*{WEff}(A,D)\cap F=\operatorname*{WEff}(F,D)$.
\item[(f)] If $D+\operatorname*{core}D\subseteq D$, then\\
$\operatorname*{WEff}(F\cap (y-D),D)=\operatorname*{WEff}(F,D)\cap (y-D)\mbox{ for all } y\in Y$.
\end{itemize}
\end{lemma}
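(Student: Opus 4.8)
The plan is to reduce parts (a)--(d) directly to the corresponding statements of Lemma \ref{l-Eff-prop}, exploiting the identity $\operatorname*{WEff}(F,D)=\operatorname*{Eff}(F,\operatorname*{core}D)$ together with the elementary inclusion $\operatorname*{core}D\subseteq D$. For (a) I would apply Lemma \ref{l-Eff-prop}(b) with $D_1=\operatorname*{core}D$, which gives $\operatorname*{Eff}(F,D)\subseteq\operatorname*{Eff}(F,\operatorname*{core}D)$. Part (b) is Lemma \ref{l-Eff-prop}(b) applied to $\operatorname*{core}D_1\subseteq\operatorname*{core}D$, and part (c) is Lemma \ref{l-Eff-prop}(c) with $D$ replaced by $\operatorname*{core}D$. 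For (d) the key observation is that $F\cup(F+\operatorname*{core}D)=F+(\operatorname*{core}D\cup\{0\})$, so the hypothesis is precisely the hypothesis of Lemma \ref{l-Eff-prop}(d) written for $\operatorname*{core}D$; the first inclusion, and under the two additional assumptions the equality, then follow verbatim from that lemma.

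The substantive work lies in (e) and (f), where the set on which efficiency is tested involves $D$ but efficiency itself is measured by $\operatorname*{core}D$, so one cannot merely substitute $\operatorname*{core}D$ into the earlier results. For (e) the inclusion $\operatorname*{WEff}(A,D)\cap F\subseteq\operatorname*{WEff}(F,D)$ is again (c). For the reverse inclusion I would take $y^0\in\operatorname*{Eff}(F,\operatorname*{core}D)$, assume toward a contradiction that $y^0\notin\operatorname*{Eff}(A,\operatorname*{core}D)$, and obtain $a\in A\setminus\{y^0\}$ with $a\in y^0-\operatorname*{core}D$. Writing $a=y+d$ with $y\in F$ and $d\in D\cup\{0\}$ (possible since $A\subseteq F+(D\cup\{0\})$), the assumption $D+\operatorname*{core}D\subseteq\operatorname*{core}D$ yields $\operatorname*{core}D+d\subseteq\operatorname*{core}D$, whence $y\in y^0-(\operatorname*{core}D+d)\subseteq y^0-\operatorname*{core}D$; this contradicts $y^0\in\operatorname*{Eff}(F,\operatorname*{core}D)$ as soon as $y\neq y^0$.

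The main obstacle is exactly ruling out $y=y^0$ in (e), and this is where both extra hypotheses are genuinely used. If $y=y^0$, then $a=y^0+d\in y^0-\operatorname*{core}D$ forces $-d\in\operatorname*{core}D$; the subcase $d=0$ would give $a=y^0$, which is excluded, so $d\in D$ and $0=d+(-d)\in D+\operatorname*{core}D\subseteq\operatorname*{core}D$, contradicting the hypothesis $0\notin\operatorname*{core}D$. Finally, for (f) the inclusion $\supseteq$ comes from (c), noting that every element of $\operatorname*{Eff}(F,\operatorname*{core}D)$ lies in $F$, so that $\operatorname*{Eff}(F,\operatorname*{core}D)\cap(y-D)=\operatorname*{Eff}(F,\operatorname*{core}D)\cap(F\cap(y-D))\subseteq\operatorname*{Eff}(F\cap(y-D),\operatorname*{core}D)$. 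For $\subseteq$ I would imitate the proof of Lemma \ref{l-Eff-prop}(e): given $y^0\in\operatorname*{Eff}(F\cap(y-D),\operatorname*{core}D)$ and any $y^1\in F\cap(y^0-\operatorname*{core}D)$, the inclusion $y^1\in(y-D)-\operatorname*{core}D=y-(D+\operatorname*{core}D)\subseteq y-D$, which uses $D+\operatorname*{core}D\subseteq D$, places $y^1$ in $F\cap(y-D)\cap(y^0-\operatorname*{core}D)\subseteq\{y^0\}$, so $y^1=y^0$ and hence $y^0\in\operatorname*{Eff}(F,\operatorname*{core}D)\cap(y-D)$.
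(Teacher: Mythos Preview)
Your proof is correct and follows essentially the same route as the paper: parts (a)--(d) are reduced to Lemma \ref{l-Eff-prop} via $\operatorname*{WEff}(F,D)=\operatorname*{Eff}(F,\operatorname*{core}D)$, and the arguments for (e) and (f) mirror the paper's almost verbatim. The only minor difference is in (e), where your case analysis for ruling out $y=y^0$ is more elaborate than necessary---once you have $y\in y^0-\operatorname*{core}D$, the hypothesis $0\notin\operatorname*{core}D$ already forces $y\neq y^0$ directly, which is the shortcut the paper uses implicitly.
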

{\it Proof}
\begin{itemize}
\item[(a)] follows from Lemma \ref{l-Eff-prop}(b) with $D_1=\operatorname*{core}D$.
\item[(b)]- (d) result from Lemma \ref{l-Eff-prop}(b)-(d) when replacing $D$ and $D_1$ by their core.
\item[(e)] Consider some $y^0\in F$ with $y^0\notin \operatorname*{WEff}(A,D)$. $\Rightarrow \exists a\in A\setminus\{y^0\}:\;\; y^0\in a+\operatorname*{core}D$.
Since $A\subseteq F+(D\cup\{0\})$, there exist $y^1\in F, d\in D\cup\{0\}$ such that $y^0\in y^1+d+\operatorname*{core}D\subseteq y^1+\operatorname*{core}D$.
$\Rightarrow y^0\notin \operatorname*{WEff}(F,D)$.\\
Hence $\operatorname*{WEff}(F,D)\subseteq \operatorname*{WEff}(A,D)$. The assertion follows by (c) since $F\subseteq A$.
\item[(f)] Choose some $y\in Y$.
$\operatorname*{WEff}(F\cap (y-D),D)\supseteq\operatorname*{WEff}(F,D)\cap (y-D)$ results from (c).
Assume there exists some $y^0\in\operatorname*{WEff}(F\cap (y-D),D)\setminus\operatorname*{WEff}(F,D)$.
$\Rightarrow (y^0-\operatorname*{core}D)\cap (F\cap (y-D))\subseteq \{y^0\}$ and $\exists y^1\not=y^0:\; y^1\in F\cap(y^0-\operatorname*{core}D)$.
$y^1\in y^0-\operatorname*{core}D\subseteq (y-D)-\operatorname*{core}D\subseteq y-D$. $\Rightarrow y^1\in (y^0-\operatorname*{core}D)\cap (F\cap (y-D))$, a contradiction.
\qed
\end{itemize}

\begin{corollary}\label{c-FplusD}
Assume $D$ is a non-trivial pointed convex cone and $F\subseteq A\subseteq F+D$.
\begin{itemize}
\item[(a)] $\operatorname*{Eff}(A,D)=\operatorname*{Eff}(F,D)$.
\item[(b)] $\operatorname*{WEff}(A,D)\cap F=\operatorname*{WEff}(F,D)$.
\end{itemize}
\end{corollary}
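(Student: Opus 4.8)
The plan is to deduce both parts directly from the general sandwich lemmas proved earlier, namely Lemma \ref{l-Eff-prop}(d) for (a) and Lemma \ref{l-wEff-prop}(e) for (b), by checking that the cone hypotheses on $D$ supply exactly the extra assumptions those lemmas require. The first observation I would record is that a non-trivial cone is nonempty, so by the definition of a cone (taking $\lambda=0$) we have $0\in D$; hence $D\cup\{0\}=D$ and the chain $F\subseteq A\subseteq F+D$ is literally the hypothesis $F\subseteq A\subseteq F+(D\cup\{0\})$ appearing in both lemmas. This matching is what makes the two lemmas applicable without any modification.

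For part (a), I would verify the two supplementary conditions of Lemma \ref{l-Eff-prop}(d). Pointedness of $D$ gives $D\cap(-D)=\{0\}$, so in particular $D\cap(-D)\subseteq\{0\}$; and since $D$ is a convex cone, for $a,b\in D$ convexity yields $\tfrac12(a+b)\in D$, whence $a+b=2\cdot\tfrac12(a+b)\in D$ by the cone property, so $D+D\subseteq D$. With both extra hypotheses in force, Lemma \ref{l-Eff-prop}(d) gives $\operatorname*{Eff}(A,D)=\operatorname*{Eff}(F,D)$ at once.

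For part (b), I would check the hypotheses $0\notin\operatorname*{core}D$ and $D+\operatorname*{core}D\subseteq\operatorname*{core}D$ of Lemma \ref{l-wEff-prop}(e). The inclusion is the main point to argue: given $d\in D$ and $c\in\operatorname*{core}D$, for every direction $y\in Y$ there is $\varepsilon>0$ with $c+ty\in D$ for all $t\in[0,\varepsilon]$, and then $d+c+ty=d+(c+ty)\in D+D\subseteq D$ using the inclusion $D+D\subseteq D$ established in (a); since $y$ was arbitrary this shows $d+c\in\operatorname*{core}D$, i.e. $D+\operatorname*{core}D\subseteq\operatorname*{core}D$. For $0\notin\operatorname*{core}D$, I would argue by contradiction: if $0\in\operatorname*{core}D$ then for each $y\in Y$ some $t_0>0$ has $t_0y\in D$, and the cone property forces $y=\tfrac{1}{t_0}(t_0y)\in D$, so $D=Y$, contradicting non-triviality. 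Lemma \ref{l-wEff-prop}(e) then yields $\operatorname*{WEff}(A,D)\cap F=\operatorname*{WEff}(F,D)$.

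The only genuinely non-clerical steps are the two auxiliary facts in part (b); everything else is the verification that ``non-trivial pointed convex cone'' unpacks into the precise algebraic conditions already isolated in the lemmas. I expect the short argument that $0\notin\operatorname*{core}D$ (using the algebraic-interior definition together with the scaling inherent in a cone) to be the most delicate piece, since it is the one place where the non-triviality assumption is actually consumed.
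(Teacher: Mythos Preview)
Your proof is correct and follows essentially the same route the paper intends: the corollary is stated immediately after Lemma~\ref{l-wEff-prop} precisely because (a) is a specialization of Lemma~\ref{l-Eff-prop}(d) and (b) of Lemma~\ref{l-wEff-prop}(e), and you have checked exactly the cone-theoretic conditions ($0\in D$, $D+D\subseteq D$, $D\cap(-D)\subseteq\{0\}$, $D+\operatorname*{core}D\subseteq\operatorname*{core}D$, $0\notin\operatorname*{core}D$) needed to invoke them. Your verification of $0\notin\operatorname*{core}D$ via non-triviality is fine; one could equally derive it from pointedness, but either hypothesis suffices.
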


Podinovskij and Nogin \cite[Lemma 2.2.1]{pono82} proved part (b) of Corollary \ref{c-FplusD} in $Y=\mathbb{R}^\ell$ for $D=\mathbb{R}^\ell _+$ and $A=F+\mathbb{R}^\ell _+$. They gave the following example that, in general, $\operatorname*{WEff}(F+\mathbb{R}^\ell _+,D)=\operatorname*{WEff}(F,\mathbb{R}^\ell _+)$ is not fulfilled.

\begin{example}
$Y:=\mathbb{R}^2$, $F:=\{(y_1,y_2)\in Y\mid y_2>0\}\cup\{0\}$.
Then $F+\mathbb{R}^2_+=F\cup\{(y_1,y_2)\in Y\mid y_1>0, y_2=0\}$.
$(1,0)^T\in \operatorname*{WEff}(F+\mathbb{R}^2_+,\mathbb{R}^2_+)\setminus \operatorname*{WEff}(F,\mathbb{R}^2_+)$, since $(1,0)^T\notin F$.
\end{example}

Efficient elements are usually not located in the core of the feasible point set.

\begin{proposition}\label{eff-in-algcl}
Assume that there exists some $d\in D\setminus\{0\}$ such that $td\in D\setminus\{0\}\mbox{ for all } t\in (0,1)$. Then
$$\operatorname*{Eff}(F,D)\subseteq F\setminus\operatorname*{core}F.$$
\end{proposition}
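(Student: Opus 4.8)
The plan is to argue by contraposition: I would show that no point of $\operatorname*{core}F$ can be efficient, i.e. that $y^0\in\operatorname*{core}F$ forces $y^0\notin\operatorname*{Eff}(F,D)$. Since $\operatorname*{core}F\subseteq F$ and $\operatorname*{Eff}(F,D)\subseteq F$ hold trivially, this is exactly what is needed to conclude $\operatorname*{Eff}(F,D)\subseteq F\setminus\operatorname*{core}F$. So I would fix an arbitrary $y^0\in\operatorname*{core}F$ and let $d\in D\setminus\{0\}$ be the element supplied by the hypothesis, so that $td\in D\setminus\{0\}$ for every $t\in(0,1)$.

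Next I would exploit the algebraic interior in the single direction $-d$. By the definition of $\operatorname*{core}F$ applied to the vector $-d$, there exists some $\delta\in\mathbb{R}_>$ such that $y^0-td\in F$ for all $t\in[0,\delta]$. I would then pick one scalar $t_0$ with $0<t_0<\operatorname*{min}\{\delta,1\}$. For this choice, two facts hold at once: $y^0-t_0d\in F$ (because $t_0\le\delta$), and $t_0d\in D\setminus\{0\}$ (because $t_0\in(0,1)$, by the hypothesis on $d$).

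The conclusion is then immediate. The point $y^0-t_0d$ lies in $F\cap(y^0-D)$, and it differs from $y^0$ since $t_0d\neq 0$. Hence $F\cap(y^0-D)\not\subseteq\{y^0\}$, so $y^0\notin\operatorname*{Eff}(F,D)$. As $y^0\in\operatorname*{core}F$ was arbitrary, every efficient element must lie outside $\operatorname*{core}F$, which yields $\operatorname*{Eff}(F,D)\subseteq F\setminus\operatorname*{core}F$.

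I do not expect any genuine obstacle here; the argument is a one-step contradiction. The only point requiring care is the simultaneous bookkeeping that forces $t_0<1$ (so that the hypothesis delivers $t_0d\in D\setminus\{0\}$) together with $t_0\le\delta$ (so that the algebraic-interior segment keeps $y^0-t_0d$ inside $F$). The hypothesis on $d$ is used precisely to guarantee that arbitrarily small positive multiples of $d$ stay in $D\setminus\{0\}$; without it, the segment in direction $-d$ guaranteed by $y^0\in\operatorname*{core}F$ might escape $D$ before producing a dominating point distinct from $y^0$.
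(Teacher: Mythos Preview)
Your proof is correct and follows essentially the same approach as the paper's: the paper likewise takes an arbitrary $y^0\in\operatorname*{core}F$, uses the algebraic interior in direction $-d$ to find some $t\in(0,1)$ with $y^0-td\in F$, and concludes $y^0\notin\operatorname*{Eff}(F,D)$. Your version merely spells out in more detail the bookkeeping of choosing $t_0<\min\{\delta,1\}$.
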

\begin{proof}
Consider some $y^0\in \operatorname*{core}F$.
$\Rightarrow \exists t\in\mathbb{R}_>:\; t<1$ and $y^0-td\in F$. $\Rightarrow y^0\notin \operatorname*{Eff}(F,D)$. 
\end{proof}

The assumption is fulfilled, if $D\cup\{0\}$ is star-shaped about zero. This is the case if $D$ is a cone or $D\cup\{0\}$ is convex.

\begin{theorem}\label{weff-nocore}
\begin{itemize}
\item[]
\item[(a)] We have 
\begin{equation}\label{core-in}
F\setminus\operatorname*{core}(F+D)\subseteq\operatorname*{WEff}(F,D).
\end{equation}
\item[(b)] If $D$ is a convex cone, then
$$\operatorname*{WEff}(F,D)=F\setminus\operatorname*{core}(F+D).$$
\end{itemize}
\end{theorem}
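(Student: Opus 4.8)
The plan is to prove both inclusions by contraposition, using the definition $\operatorname*{WEff}(F,D)=\operatorname*{Eff}(F,\operatorname*{core}D)$, so that $y^0\in\operatorname*{WEff}(F,D)$ means exactly $y^0\in F$ and $F\cap(y^0-\operatorname*{core}D)\subseteq\{y^0\}$. Failure of weak efficiency therefore amounts to the existence of a dominating point $y^1\in F$, $y^1\neq y^0$, with $y^0-y^1\in\operatorname*{core}D$.

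For part (a) I would take $y^0\in F$ with $y^0\notin\operatorname*{WEff}(F,D)$ and show $y^0\in\operatorname*{core}(F+D)$, which is the contrapositive of (\ref{core-in}). Weak inefficiency supplies some $y^1\in F$, $y^1\neq y^0$, with $d:=y^0-y^1\in\operatorname*{core}D$. Then for an arbitrary direction $v\in Y$ the definition of the algebraic interior of $D$ gives $\delta>0$ with $d+tv\in D$ for all $t\in[0,\delta]$, whence $y^0+tv=y^1+(d+tv)\in F+D$ for all such $t$. Since $v$ was arbitrary, $y^0\in\operatorname*{core}(F+D)$. This step uses no hypothesis on $D$, so (a) holds unconditionally.

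For part (b), the inclusion $F\setminus\operatorname*{core}(F+D)\subseteq\operatorname*{WEff}(F,D)$ is precisely (a), so only the reverse remains, and I would again argue contrapositively: if $y^0\in F\cap\operatorname*{core}(F+D)$, then $y^0\notin\operatorname*{WEff}(F,D)$. Fix $k\in\operatorname*{core}D$; since $y^0\in\operatorname*{core}(F+D)$, moving in the direction $-k$ yields $\varepsilon>0$ with $y^0-\varepsilon k\in F+D$, i.e. $y^0-\varepsilon k=y^1+d'$ for some $y^1\in F$ and $d'\in D$, so that $y^0-y^1=\varepsilon k+d'$. Here the convex-cone structure enters: $\varepsilon k\in\operatorname*{core}D$ (scaling a core element of a cone by $\varepsilon>0$), and adding $d'\in D$ stays in the core because for a convex cone $\operatorname*{core}D+D\subseteq\operatorname*{core}D$ (line-segment principle: $\tfrac12\varepsilon k+\tfrac12 d'\in\operatorname*{core}D$, then rescale by $2$). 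Hence $y^0-y^1\in\operatorname*{core}D$, and since $D\neq Y$ forces $0\notin\operatorname*{core}D$ we get $y^1\neq y^0$, so $y^0$ is not weakly efficient.

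The main obstacle is this reverse inclusion in (b): membership in $\operatorname*{core}(F+D)$ only guarantees a decomposition with $d'\in D$, and the work is to upgrade it to a genuine dominating point $y^1\in F$ with $y^0-y^1$ lying in the ``open'' object $\operatorname*{core}D$. This is exactly where convexity of the cone is indispensable, since it provides $\operatorname*{core}D+D\subseteq\operatorname*{core}D$, together with $\operatorname*{core}D\neq\emptyset$ in order to choose $k$; I would note that if $\operatorname*{core}D=\emptyset$ the weak efficiency notion degenerates and the statement is to be read accordingly.
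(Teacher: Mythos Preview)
Your argument for part (a) is essentially the paper's own proof, just with the verification that $y^0\in\operatorname*{core}(F+D)$ written out in full.

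For part (b) you take a genuinely different route. The paper does not argue directly: it first invokes Lemma~\ref{l-wEff-prop}(e) (which, for a convex cone $D$, gives $\operatorname*{WEff}(F,D)=\operatorname*{WEff}(F+D,D)\cap F$) and then applies Proposition~\ref{eff-in-algcl} to $F+D$ and $\operatorname*{core}D$ to conclude $\operatorname*{WEff}(F+D,D)\subseteq (F+D)\setminus\operatorname*{core}(F+D)$. Your argument is self-contained: you perturb $y^0\in\operatorname*{core}(F+D)$ in the direction $-k$ for a chosen $k\in\operatorname*{core}D$, decompose the resulting point as $y^1+d'$, and then use the convex-cone facts $\varepsilon k\in\operatorname*{core}D$ and $\operatorname*{core}D+D\subseteq\operatorname*{core}D$ to exhibit a dominating $y^1$. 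This is more elementary and does not rely on the earlier machinery; the paper's route, by contrast, shows how the result sits inside the general framework already developed. Both approaches tacitly require $\operatorname*{core}D\neq\emptyset$ --- you flag this explicitly, which is good, but note that ``read accordingly'' undersells the issue: when $\operatorname*{core}D=\emptyset$ one has $\operatorname*{WEff}(F,D)=F$ while $F\setminus\operatorname*{core}(F+D)$ can be strictly smaller, so the equality genuinely fails rather than merely degenerating.
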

{\it Proof}
\begin{itemize}
\item[(a)]If $y^0\in F\setminus\operatorname*{WEff}(F,D)$, then there exists some $y\in F\cap(y^0-\operatorname*{core}D)$, which implies $y^0\in\operatorname*{core}(F+D)$. Thus (\ref{core-in}) holds.
\item[(b)] $\operatorname*{WEff}(F,D)=\operatorname*{WEff}(F+D,D)\cap F$ by Lemma \ref{l-wEff-prop}. $\operatorname*{WEff}(F+D,D)\subseteq (F+D)\setminus\operatorname*{core}(F+D)$ by Proposition \ref{eff-in-algcl}.
\qed
\end{itemize}

Let us now formulate sufficient conditions for solutions of vector optimization problems by minima of scalar-valued functions.
We will use the abbreviation $\operatorname*{argmin}\nolimits_{M}\varphi :=\operatorname*{argmin}\nolimits_{y\in M}\varphi (y)$.
One can show \cite{Wei90}:

\begin{proposition}\label{p-mon-scal}
Assume $\varphi :F\to \mathbb{R}$.
\begin{itemize}
\item[(a)] $\operatorname*{Eff}(F,D)\cap \operatorname*{argmin}\nolimits_{F}\varphi \subseteq \operatorname*{Eff}(\operatorname*{argmin}\nolimits_{F}\varphi ,D)$.
\item[(b)] If $\varphi $ is $D$--monotone on $F$, then $\operatorname*{Eff}(F,D)\cap \operatorname*{argmin}\nolimits_{F}\varphi = \operatorname*{Eff}(\operatorname*{argmin}\nolimits_{F}\varphi ,D)$.\\
If, additionally, $\operatorname*{argmin}\nolimits_{F}\varphi =\{y^0\}$, then $y^0\in \operatorname*{Eff}(F,D)$.
\item[(c)] $\operatorname*{argmin}\nolimits_{F}\varphi \subseteq \operatorname*{Eff}(F,D)$ holds, if $\varphi$ is strictly $D$--monotone on $F$.
\end{itemize}
\end{proposition}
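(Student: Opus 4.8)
The plan is to obtain part (a) for free from the purely set-theoretic restriction property in Lemma \ref{l-Eff-prop}(c), and then to read off (c) and the nontrivial inclusion in (b) by short contradiction arguments directly from the definitions of $D$-monotonicity, strict $D$-monotonicity, efficiency and $\operatorname*{argmin}$. Throughout I abbreviate $M:=\operatorname*{argmin}\nolimits_{F}\varphi$ and recall that $y^0\in\operatorname*{Eff}(G,D)$ means $y^0\in G$ and $G\cap(y^0-D)\subseteq\{y^0\}$.

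Part (a) is immediate: since $M\subseteq F$, Lemma \ref{l-Eff-prop}(c) with $F_1=M$ gives $\operatorname*{Eff}(F,D)\cap M\subseteq\operatorname*{Eff}(M,D)$, and no monotonicity is used. For (c) I argue by contradiction: let $y^0\in M$ and suppose $y^0\notin\operatorname*{Eff}(F,D)$, so there is $y\in F$ with $y\in y^0-D$ and $y\neq y^0$, i.e. $y^0-y\in D\setminus\{0\}$; strict $D$-monotonicity (applied with $y^1=y$, $y^2=y^0$) then forces $\varphi(y)<\varphi(y^0)$, contradicting $y^0\in M$. Hence $M\subseteq\operatorname*{Eff}(F,D)$.

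For (b) the inclusion $\operatorname*{Eff}(F,D)\cap M\subseteq\operatorname*{Eff}(M,D)$ is exactly (a). For the reverse inclusion I take $y^0\in\operatorname*{Eff}(M,D)\subseteq M$ and assume $y^0\notin\operatorname*{Eff}(F,D)$; then there is $y\in F\setminus\{y^0\}$ with $y^0-y\in D$. Here $D$-monotonicity only gives $\varphi(y)\le\varphi(y^0)$, and combining with $\varphi(y^0)\le\varphi(y)$ (from $y^0\in M$) yields $\varphi(y)=\varphi(y^0)$, so $y\in M$; then $y\in M\cap(y^0-D)$ with $y\neq y^0$ contradicts $y^0\in\operatorname*{Eff}(M,D)$. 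Finally, if $M=\{y^0\}$ then $y^0\in\operatorname*{Eff}(\{y^0\},D)=\operatorname*{Eff}(M,D)=\operatorname*{Eff}(F,D)\cap M$, so $y^0\in\operatorname*{Eff}(F,D)$. The one step needing care — and the only place where dropping strictness costs something — is this reverse inclusion: under mere $D$-monotonicity the dominating point $y$ need not strictly beat $y^0$, so I cannot contradict minimality in $F$ directly; the remedy is to use the equality of objective values to move the argument inside $M$, where efficiency of $y^0$ with respect to $M$ provides the contradiction.
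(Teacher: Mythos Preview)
Your proof is correct and follows essentially the same approach as the paper: part (a) via Lemma \ref{l-Eff-prop}(c), and parts (b) and (c) by the same contradiction arguments exploiting (strict) $D$-monotonicity to force the dominating point either into $M$ or below the minimum. The only difference is cosmetic---you spell out the equality $\varphi(y)=\varphi(y^0)$ in (b) and add an explanatory closing remark, whereas the paper jumps directly from $\varphi(y^0)\ge\varphi(y)$ to $y\in\operatorname*{argmin}\nolimits_F\varphi$.
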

{\it Proof}
\begin{itemize}
\item[(a)] results from Lemma \ref{l-Eff-prop}(c), since $\operatorname*{argmin}\nolimits_{F}\varphi \subseteq F$.
\item[(b)] Consider some $y^0\in \operatorname*{Eff}(\operatorname*{argmin}\nolimits_{F}\varphi ,D)$ and assume\\
$y^0\notin \operatorname*{Eff}(F,D)\cap \operatorname*{argmin}\nolimits_{F}\varphi $.
$\Rightarrow y^0\notin \operatorname*{Eff}(F,D)$. $\Rightarrow \exists y\not= y^0:\; y\in F\cap (y^0-D)$.
$\Rightarrow y^0-y\in D$. $\Rightarrow \varphi (y^0)\geq \varphi (y)$, since $\varphi $ is $D$--monotone on $F$.
$\Rightarrow y\in \operatorname*{argmin}\nolimits_{F}\varphi \cap (y^0-D)$. $\Rightarrow y^0\notin \operatorname*{Eff}(\operatorname*{argmin}\nolimits_{F}\varphi ,D)$, a contradiction.\\
If $\operatorname*{argmin}\nolimits_{F}\varphi =\{y^0\}$, then $\operatorname*{Eff}(\operatorname*{argmin}\nolimits_{F}\varphi ,D)=\{y^0\}$, which yields the assertion.
\item[(c)] Consider some $y^0\in \operatorname*{argmin}\nolimits_{F}\varphi $ and assume $y^0\notin \operatorname*{Eff}(F,D)$.
$\Rightarrow \exists y\not= y^0:\; y\in F\cap (y^0-D)$.
$\Rightarrow y^0-y\in D\setminus\{0\}$. $\Rightarrow \varphi (y^0)> \varphi (y)$, since $\varphi $ is strictly $D$--monotone on $F$, a contradiction to $y^0\in \operatorname*{argmin}\nolimits_{F}\varphi $.
\qed
\end{itemize}

\smallskip
Immediately from the previous proposition, we get the related statements for weakly efficient elements.

\begin{proposition}\label{p-mon-wscal}
Assume $\varphi :F\to \mathbb{R}$.
\begin{itemize}
\item[(a)] $\operatorname*{WEff}(F,D)\cap \operatorname*{argmin}\nolimits_{F}\varphi \subseteq \operatorname*{WEff}(\operatorname*{argmin}\nolimits_{F}\varphi ,D)$.
\item[(b)] If $\varphi $ is $(\operatorname*{core}D)$-monotone on $F$, then\\
$\operatorname*{WEff}(F,D)\cap \operatorname*{argmin}\nolimits_{F}\varphi = \operatorname*{WEff}(\operatorname*{argmin}\nolimits_{F}\varphi ,D)$.\\
If, additionally, $\operatorname*{argmin}\nolimits_{F}\varphi =\{y^0\}$, then $y^0\in \operatorname*{WEff}(F,D)$.
\item[(c)] $\operatorname*{argmin}\nolimits_{F}\varphi \subseteq \operatorname*{WEff}(F,D)$ holds if $\varphi$ is strictly $(\operatorname*{core}D)$-monotone on $F$.
\end{itemize}
\end{proposition}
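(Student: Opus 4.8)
The plan is to obtain every assertion as a direct specialization of Proposition \ref{p-mon-scal}, applied with the set $\operatorname*{core}D$ playing the role of $D$. The bridge is the definition $\operatorname*{WEff}(F,D):=\operatorname*{Eff}(F,\operatorname*{core}D)$, which also gives $\operatorname*{WEff}(\operatorname*{argmin}\nolimits_{F}\varphi,D)=\operatorname*{Eff}(\operatorname*{argmin}\nolimits_{F}\varphi,\operatorname*{core}D)$, since the domination set is left untouched and only the feasible set is restricted. Thus each statement about weak efficiency is literally a statement about efficiency with respect to $\operatorname*{core}D$, and no new argument is needed beyond translating notation.

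Concretely, for part (a) I would invoke Proposition \ref{p-mon-scal}(a) with $D$ replaced by $\operatorname*{core}D$, obtaining $\operatorname*{Eff}(F,\operatorname*{core}D)\cap\operatorname*{argmin}\nolimits_{F}\varphi\subseteq\operatorname*{Eff}(\operatorname*{argmin}\nolimits_{F}\varphi,\operatorname*{core}D)$, which is exactly the claimed inclusion once both sides are rewritten via the definition of $\operatorname*{WEff}$. For part (b), the hypothesis that $\varphi$ is $(\operatorname*{core}D)$-monotone on $F$ is precisely the monotonicity hypothesis of Proposition \ref{p-mon-scal}(b) after the substitution, so that result yields the equality $\operatorname*{WEff}(F,D)\cap\operatorname*{argmin}\nolimits_{F}\varphi=\operatorname*{WEff}(\operatorname*{argmin}\nolimits_{F}\varphi,D)$, and the singleton addendum ($\operatorname*{argmin}\nolimits_{F}\varphi=\{y^0\}$ forcing $y^0\in\operatorname*{WEff}(F,D)$) transfers in the same way. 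Part (c) follows identically from Proposition \ref{p-mon-scal}(c): strict $(\operatorname*{core}D)$-monotonicity of $\varphi$ on $F$ gives $\operatorname*{argmin}\nolimits_{F}\varphi\subseteq\operatorname*{Eff}(F,\operatorname*{core}D)=\operatorname*{WEff}(F,D)$.

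There is no genuine obstacle here; the only point deserving a moment's care is checking that the substitution $D\mapsto\operatorname*{core}D$ carries the hypotheses faithfully, namely that each ``$D$-monotone'' assumption in the source proposition corresponds exactly to the ``$(\operatorname*{core}D)$-monotone'' assumption stated here, and that the argmin set, being a subset of $F$, is precisely the object appearing in both the hypothesis and the conclusion. Both are immediate from the definitions, so the whole proposition reduces to a one-line corollary of Proposition \ref{p-mon-scal}.
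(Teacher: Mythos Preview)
Your proposal is correct and matches the paper's approach exactly: the paper simply notes that the result follows ``immediately from the previous proposition'' (Proposition \ref{p-mon-scal}) by replacing $D$ with $\operatorname*{core}D$ and using the definition $\operatorname*{WEff}(F,D)=\operatorname*{Eff}(F,\operatorname*{core}D)$.
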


\smallskip

\section{Scalarization in Vector Optimization by Functionals with Uniform Sublevel Sets}\label{s-scal-uni}

We will now derive conditions for efficient and weakly efficient elements by functionals with uniform sublevel sets. Here, we use functionals $\varphi _{a-H,k}$, where $a\in Y$ can be considered to be some reference point and $H\subset Y$ is a set related to the domination set $D$.

We assume that $H\not=\{0\}$ is a proper subset of $Y$ and $a\in Y$. 

Even if the functionals $\varphi _{a-H,k}$ are not defined on the whole set $F$, they can deliver efficient and weakly efficient elements of $F$.

\begin{lemma}\label{hab-l421}
\begin{itemize}
\item[]
\item[(a)] $\operatorname*{Eff}(F,D)\cap \operatorname*{dom}\varphi _{a-H,k}\subseteq \operatorname*{Eff}(F\cap\operatorname*{dom}\varphi _{a-H,k},D)$.
\item[(b)] $H+D\subseteq H\implies  \operatorname*{Eff}(F,D)\cap \operatorname*{dom}\varphi _{a-H,k}= \operatorname*{Eff}(F\cap\operatorname*{dom}\varphi _{a-H,k},D)$.
\item[(c)] $\operatorname*{WEff}(F,D)\cap \operatorname*{dom}\varphi _{a-H,k}\subseteq \operatorname*{WEff}(F\cap\operatorname*{dom}\varphi _{a-H,k},D)$.
\item[(d)] $H+\operatorname*{core}D\subseteq H\implies  \operatorname*{WEff}(F,D)\cap \operatorname*{dom}\varphi _{a-H,k}= \operatorname*{WEff}(F\cap\operatorname*{dom}\varphi _{a-H,k},D)$.
\end{itemize}
\end{lemma}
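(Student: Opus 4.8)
The plan is to get the two containments (a) and (c) for free from the monotonicity of $\operatorname*{Eff}$ and $\operatorname*{WEff}$ in the feasible set, and then to prove the reverse containments in (b) and (d) by showing that the hypotheses on $H$ force the effective domain to absorb the relevant domination set. Throughout I would abbreviate $G:=\operatorname*{dom}\varphi_{a-H,k}$ and record, via (\ref{dom_Ak}) applied with $A=a-H$, that $G=a-H+\mathbb{R}k$.

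For (a), I would put $F_1:=F\cap G$. Since $F_1\subseteq F$, Lemma \ref{l-Eff-prop}(c) yields $\operatorname*{Eff}(F,D)\cap F_1\subseteq\operatorname*{Eff}(F_1,D)$; because $\operatorname*{Eff}(F,D)\subseteq F$, the left-hand side is just $\operatorname*{Eff}(F,D)\cap G$, which is the claim. Part (c) is the same argument with Lemma \ref{l-wEff-prop}(c) in place of Lemma \ref{l-Eff-prop}(c) (equivalently, run the above with $\operatorname*{core}D$ instead of $D$).

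The content is in (b). In view of (a) it remains to prove $\operatorname*{Eff}(F\cap G,D)\subseteq\operatorname*{Eff}(F,D)$; the intersection with $G$ on the right is automatic since $\operatorname*{Eff}(F\cap G,D)\subseteq G$. Given $y^0\in\operatorname*{Eff}(F\cap G,D)$, I would take any $y\in F\cap(y^0-D)$ and show $y=y^0$. Writing $y=y^0-d$ with $d\in D$ and using $y^0=a-h+tk$ for suitable $h\in H$, $t\in\mathbb{R}$, the key computation is $y=a-(h+d)+tk$; the hypothesis $H+D\subseteq H$ gives $h+d\in H$, hence $y\in a-H+\mathbb{R}k=G$. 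Therefore $y\in(F\cap G)\cap(y^0-D)$, and efficiency of $y^0$ in $F\cap G$ forces $y=y^0$, so $y^0\in\operatorname*{Eff}(F,D)$. Part (d) then runs verbatim after substituting $\operatorname*{core}D$ for $D$ everywhere and invoking $\operatorname*{WEff}(\cdot,D)=\operatorname*{Eff}(\cdot,\operatorname*{core}D)$: the hypothesis $H+\operatorname*{core}D\subseteq H$ supplies $h+d\in H$ for $d\in\operatorname*{core}D$, delivering $y\in G$ exactly as before.

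The single step I would flag as the crux is this stability observation: $G=a-H+\mathbb{R}k$ is closed under translation by $-D$ (resp. $-\operatorname*{core}D$) precisely because of $H+D\subseteq H$ (resp. $H+\operatorname*{core}D\subseteq H$). Once it is isolated, any comparison point witnessing a failure of efficiency in $F$ automatically lies in the restricted feasible set $F\cap G$, so restricting to $\operatorname*{dom}\varphi_{a-H,k}$ loses no efficient (resp. weakly efficient) elements; the remainder is routine unwinding of the definition of $\operatorname*{Eff}$.
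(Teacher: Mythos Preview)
Your proof is correct and follows essentially the same approach as the paper: parts (a) and (c) are reduced to Lemma~\ref{l-Eff-prop}(c) (resp.\ Lemma~\ref{l-wEff-prop}(c)), and for (b) and (d) the key computation---that $y^0\in G=a-H+\mathbb{R}k$ and $H+D\subseteq H$ force $y^0-D\subseteq G$---is exactly what the paper uses, only phrased as a direct argument rather than the paper's contrapositive.
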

{\it Proof}
\begin{itemize}
\item[(a)] results from Lemma \ref{l-Eff-prop}(c).
\item[(b)] Consider an arbitrary $y^0\in F\cap\operatorname*{dom}\varphi _{a-H,k}$. $\Rightarrow \exists t\in\mathbb{R}:\;y^0\in a-H+tk$. Assume $y^0\notin\operatorname*{Eff}(F,D)$. $\Rightarrow \exists y\in F\cap (y^0-(D\setminus\{0\})$. $\Rightarrow y\in a-H+tk-D\subseteq a+tk-H\subseteq \operatorname*{dom}\varphi _{a-H,k}$.
$\Rightarrow y^0\notin\operatorname*{Eff}(F\cap\operatorname*{dom}\varphi _{a-H,k},D)$. 
\item[(c)] and (d) follow from (a) and (b) with $\operatorname*{core}D$ instead of $D$.
\qed
\end{itemize}

Let us first give some sufficient conditions for efficient and weakly efficient points by minimal solutions of functions $\varphi _{a-H,k}$. 

\begin{theorem}\label{hab-t421}
Define 
$$\Psi:=\operatorname*{argmin}_{y\in F\cap\operatorname*{dom}\varphi _{a-H,k}}\varphi _{a-H,k}(y).$$
Then:
\begin{itemize}
\item[(a)] $\operatorname*{Eff}(F,D)\cap \Psi\subseteq \operatorname*{Eff}(\Psi,D)$.
\item[(b)] $H+D\subseteq H\implies  \operatorname*{Eff}(F,D)\cap \Psi= \operatorname*{Eff}(\Psi,D)$.
\item[(c)] $H+D\subseteq H$ and $\Psi=\{ y^0\}$ imply $y^0\in\operatorname*{Eff}(F,D)$.
\item[(d)] If $H$ is $(-k)$-directionally closed and $H+(D\setminus\{0\})\subseteq \operatorname*{core}H$, then\\
$\Psi\subseteq \operatorname*{Eff}(F,D)$.
\item[(e)] $\Psi\subseteq \operatorname*{WEff}(F,D)$ holds if $H+D\subseteq H$ or if
$H$ is $(-k)$-directionally closed and $H+\operatorname*{core}D\subseteq \operatorname*{core}H$.
\end{itemize}
\end{theorem}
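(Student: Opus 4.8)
The plan is to treat $\varphi_{a-H,k}$ as a scalarizing functional on the restricted feasible set $\widetilde F:=F\cap\operatorname*{dom}\varphi_{a-H,k}$, to import the generic argmin-to-efficiency machinery of Proposition \ref{p-mon-scal} and Proposition \ref{p-mon-wscal}, and to transfer the conclusions back from $\widetilde F$ to $F$ by means of Lemma \ref{hab-l421}. The bridge between the set-theoretic hypotheses on $H$ and $D$ and the monotonicity hypotheses needed by those propositions is a short dictionary, obtained from the identities $(a-H)-B=a-(H+B)$ and $\operatorname*{core}(a-H)=a-\operatorname*{core}H$ together with the observation that $a-H$ is $k$-directionally closed if and only if $H$ is $(-k)$-directionally closed. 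Via this dictionary, Theorem \ref{t251M}(1) turns $H+D\subseteq H$ into $D$-monotonicity of $\varphi_{a-H,k}$, Theorem \ref{t251M}(5) turns it into strict $(\operatorname*{core}D)$-monotonicity on every set where $\varphi_{a-H,k}$ is finite-valued, and Theorem \ref{t251M}(3) turns $H+(D\setminus\{0\})\subseteq\operatorname*{core}H$ into strict $D$-monotonicity and $H+\operatorname*{core}D\subseteq\operatorname*{core}H$ into strict $(\operatorname*{core}D)$-monotonicity, again on the finite-valued part.

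Part (a) needs no monotonicity: since $\Psi\subseteq F$, the inclusion is immediate from Lemma \ref{l-Eff-prop}(c). For part (b) I would use $H+D\subseteq H$ and Lemma \ref{hab-l421}(b) to write $\operatorname*{Eff}(F,D)\cap\Psi=\operatorname*{Eff}(\widetilde F,D)\cap\Psi$ (legitimate because $\Psi\subseteq\operatorname*{dom}\varphi_{a-H,k}$), and then invoke Proposition \ref{p-mon-scal}(b), whose $D$-monotonicity hypothesis the dictionary supplies, to identify $\operatorname*{Eff}(\widetilde F,D)\cap\Psi$ with $\operatorname*{Eff}(\Psi,D)$. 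Part (c) is then one line from (b): if $\Psi=\{y^0\}$ then $\operatorname*{Eff}(F,D)\cap\{y^0\}=\operatorname*{Eff}(\{y^0\},D)=\{y^0\}$, hence $y^0\in\operatorname*{Eff}(F,D)$.

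For the sufficiency statements (d) and (e) the engine is the strict-monotonicity clause of Proposition \ref{p-mon-scal}(c), respectively Proposition \ref{p-mon-wscal}(c). In (d) the hypothesis $H+(D\setminus\{0\})\subseteq\operatorname*{core}H$ already entails $H+D\subseteq H$, so Lemma \ref{hab-l421}(b) gives $\operatorname*{Eff}(\widetilde F,D)=\operatorname*{Eff}(F,D)\cap\operatorname*{dom}\varphi_{a-H,k}$; combining strict $D$-monotonicity (from the dictionary, using $k$-directional closedness of $a-H$) with Proposition \ref{p-mon-scal}(c) chains to $\Psi\subseteq\operatorname*{Eff}(\widetilde F,D)\subseteq\operatorname*{Eff}(F,D)$. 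Part (e) runs identically with $\operatorname*{core}D$ in place of $D$: in both cases one checks $H+\operatorname*{core}D\subseteq H$ (from $H+D\subseteq H$ in the first, from $H+\operatorname*{core}D\subseteq\operatorname*{core}H$ in the second), so Lemma \ref{hab-l421}(d) transfers weak efficiency from $\widetilde F$ to $F$, while the dictionary supplies strict $(\operatorname*{core}D)$-monotonicity and Proposition \ref{p-mon-wscal}(c) yields $\Psi\subseteq\operatorname*{WEff}(\widetilde F,D)$.

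The step I expect to be the main obstacle is the possible value $-\infty$. Propositions \ref{p-mon-scal} and \ref{p-mon-wscal} are stated for real-valued $\varphi$, and Theorem \ref{t251M}(3),(5) deliver strict monotonicity only where $\varphi_{a-H,k}$ is finite-valued, whereas $\operatorname*{dom}\varphi_{a-H,k}$ may contain points of value $-\infty$. I would dispose of this by noting that $\Psi$ is nonempty only when the infimum is attained, so either $\Psi$ lies in the finite-valued part (and the propositions apply verbatim) or $\Psi$ consists of points of value $-\infty$, which must be treated directly. For the direct treatment I would bypass the propositions and argue geometrically on the level sets: for $y^0\in\Psi$ with $\varphi_{a-H,k}(y^0)=t_0\in\mathbb{R}$, the $k$-directional closedness gives $y^0\in(a-H)+t_0k$ by (\ref{eq_in_A}), and any dominating point $y=y^0-(y^0-y)$ with $y^0-y\in D\setminus\{0\}$ then lands in $\operatorname*{core}(a-H)+t_0k$, so that (\ref{core_in_less}) forces $\varphi_{a-H,k}(y)<t_0$, contradicting minimality; the $-\infty$ case has to be argued separately, and this is where the precise interplay between $k$, $H$ and $D$ becomes delicate.
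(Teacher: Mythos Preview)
Your proposal is correct and follows essentially the same route as the paper: restrict to $\widetilde F=F\cap\operatorname*{dom}\varphi_{a-H,k}$, derive the appropriate ($D$-, strict $D$-, or strict $\operatorname*{core}D$-) monotonicity of $\varphi_{a-H,k}$ from Theorem~\ref{t251M} via your dictionary, feed that into Proposition~\ref{p-mon-scal} or~\ref{p-mon-wscal}, and use Lemma~\ref{hab-l421} to lift the conclusion back to $F$. Parts (a)--(c) match the paper's proof almost line for line.

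The only point where you and the paper diverge is the $-\infty$ issue you flag at the end. The paper does not run a separate geometric argument for points where $\varphi_{a-H,k}=-\infty$; instead it disposes of the case in one sentence: ``If $\varphi_{a-H,k}$ is not finite-valued on $F\cap\operatorname*{dom}\varphi_{a-H,k}$, the assertion is fulfilled.'' In other words, the paper is taking $\Psi=\emptyset$ whenever the infimum over $\widetilde F$ is $-\infty$ (the $\operatorname*{argmin}$ being reserved for an attained \emph{real} minimum, consistent with the real-valued hypothesis in Propositions~\ref{p-mon-scal} and~\ref{p-mon-wscal}). Under that convention the inclusions in (d) and (e) are vacuous in the non-finite case, and the remainder of the argument goes through exactly as you outline. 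Your direct level-set argument for the finite-minimum case via (\ref{eq_in_A}) and (\ref{core_in_less}) is also fine, and in fact slightly more self-contained than the paper's appeal to Proposition~\ref{p-mon-scal}(c); but you should drop the attempt to chase the $-\infty$ points, since without the empty-$\Psi$ convention the statement can actually fail (take $Y=\mathbb{R}^2$, $H=\{x_1\ge 0\}$, $k=(0,1)^T$, $D=\{x_1>0\}\cup\{0\}$, $a=0$, $F=\{(0,0),(-1,0)\}$: both points have $\varphi_{a-H,k}=-\infty$, yet $(0,0)\notin\operatorname*{Eff}(F,D)$).
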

{\it Proof}
\begin{itemize}
\item[(a)] follows from Lemma \ref{l-Eff-prop}(c).
\item[(b)]$H+D\subseteq H\Rightarrow \varphi _{-H,k}$ is $D$-monotone by Theorem \ref{t251M}. $\Rightarrow \varphi _{a-H,k}$ is $D$-monotone because of Lemma \ref{A-shift}. $\Rightarrow \operatorname*{Eff}(F\cap\operatorname*{dom}\varphi _{a-H,k},D)\cap \Psi= \operatorname*{Eff}(\Psi,D)$ by Proposition
\ref{p-mon-scal}(b). This results in the assertion by Lemma \ref{hab-l421}(b).
\item[(c)] follows immediately from (b).
\item[(d)] If $\varphi _{a-H,k}$ is not finite-valued on $F\cap\operatorname*{dom}\varphi _{a-H,k}$, the assertion is fulfilled. Assume now that $\varphi _{a-H,k}$ is finite-valued on $F\cap\operatorname*{dom}\varphi _{a-H,k}$ and $H+(D\setminus\{0\})\subseteq \operatorname*{core}H$. Then $\varphi _{a-H,k}$ is strictly $D$-monotone on $F\cap\operatorname*{dom}\varphi _{a-H,k}$ by Theorem \ref{t251M}. $\Rightarrow \Psi\subseteq \operatorname*{Eff}(F\cap\operatorname*{dom}\varphi _{a-H,k},D)$ by Proposition \ref{p-mon-scal}(c). The assertion follows by Lemma \ref{hab-l421}(b).
\item[(e)] The second statement results from (d) with $D$ being replaced by $\operatorname*{core}D$.\\
If $\varphi _{a-H,k}$ is not finite-valued on $F\cap\operatorname*{dom}\varphi _{a-H,k}$, the first assertion is fulfilled as well. Assume now that $\varphi _{a-H,k}$ is finite-valued on $F\cap\operatorname*{dom}\varphi _{a-H,k}$ and that $H+D\subseteq H$ holds. By Theorem \ref{t251M}, $\varphi _{a-H,k}$ is strictly $(-\operatorname*{core}D)$-monotone on $F\cap\operatorname*{dom}\varphi _{a-H,k}$.  Proposition \ref{p-mon-wscal} implies $\Psi\subseteq \operatorname*{WEff}(F\cap\operatorname*{dom}\varphi _{a-H,k},D)$. By Lemma \ref{hab-l421},
$ \Psi\subseteq \operatorname*{WEff}(F,D)$.
\qed
\end{itemize}

\begin{remark}
Since $\varphi _{a-H,k}$ has been defined as an extended-real-valued functional,
\[ \operatorname*{min}_{y\in F\cap\operatorname*{dom}\varphi _{a-H,k}}\varphi _{a-H,k}(y)=\operatorname*{min}_{y\in F}\varphi _{a-H,k}(y).\]
We prefer to use the left formulation where we want to point out that $F$ is not necessarily contained in the effective domain of $\varphi _{a-H,k}$. In this case, $F\cap\operatorname*{dom}\varphi _{a-H,k}$ instead of $F$ is the feasible range of the optimization problem, which has immediate consequences for applications.
\end{remark}

\begin{example}
$Y=\mathbb{R}^2$, $H=D=\mathbb{R}_+^2+(1,1)^T$ and $k=(1,1)^T$ fulfill the assumptions $k\in 0^+H$, $H+D\subseteq H$ and $H+\operatorname*{core}D\subseteq \operatorname*{core}H$ though $D$ is not a convex cone.
\end{example}

\begin{corollary}\label{c-hab-t421}
Suppose that $D$ is a non-trivial convex cone in $Y$ and $k\in \operatorname*{core}D$. Define 
$$\Psi:=\operatorname*{argmin}_{y\in F}\varphi _{a-D,k}(y).$$
Then:
\begin{itemize}
\item[(a)] $\Psi\subseteq \operatorname*{WEff}(F,D)$.
\item[(b)] $\operatorname*{Eff}(F,D)\cap \Psi= \operatorname*{Eff}(\Psi,D)$.
\item[(c)] $\Psi=\{ y^0\} \implies y^0\in\operatorname*{Eff}(F,D)$.
\end{itemize}
\end{corollary}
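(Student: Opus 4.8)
The plan is to obtain all three assertions as the specialization of Theorem \ref{hab-t421} to $H:=D$. First I would verify that this choice of $H$ satisfies the standing hypotheses stated just before that theorem: since $D$ is a non-trivial convex cone we have $D\neq\{0\}$ and $D\neq Y$, so $D$ is a proper subset of $Y$ different from $\{0\}$, exactly as required of $H$.

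Next I would record the single algebraic fact that drives everything: because $D$ is a convex cone, $D+D\subseteq D$, i.e.\ $H+D\subseteq H$. This is precisely the hypothesis appearing in parts (b) and (c) and in the first alternative of part (e) of Theorem \ref{hab-t421}, so no further structural property of $D$ (pointedness, algebraic closedness, nonempty core elsewhere) will be needed for the conclusions themselves.

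Before invoking the theorem I must reconcile the two descriptions of $\Psi$: in Theorem \ref{hab-t421} the minimization runs over $F\cap\operatorname*{dom}\varphi_{a-D,k}$, whereas in the corollary it runs over $F$. Here I would use $k\in\operatorname*{core}D$. Since the recession cone of a convex cone is the cone itself, $0^+(a-D)=-D$, whence $-\operatorname*{core}0^+(a-D)=\operatorname*{core}D\ni k$, and $a-D$ is a proper subset of $Y$; Theorem \ref{theo-k-core} then shows $\varphi_{a-D,k}$ is finite-valued, so $\operatorname*{dom}\varphi_{a-D,k}=Y$ and $F\cap\operatorname*{dom}\varphi_{a-D,k}=F$. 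Thus the set $\Psi$ of the corollary coincides with the set $\Psi$ of the theorem (this also follows directly from the remark after Theorem \ref{hab-t421}, which identifies the two minima).

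With $H+D\subseteq H$ established and the two versions of $\Psi$ identified, assertion (a) follows from the first alternative in Theorem \ref{hab-t421}(e), assertion (b) from Theorem \ref{hab-t421}(b), and assertion (c) from Theorem \ref{hab-t421}(c). The only step that requires any care — and it is bookkeeping rather than a genuine obstacle — is the middle paragraph: confirming $0^+(a-D)=-D$, hence $k\in-\operatorname*{core}0^+(a-D)$, so that finite-valuedness legitimizes replacing $F\cap\operatorname*{dom}\varphi_{a-D,k}$ by $F$. All of the analytic content is already carried by Theorem \ref{hab-t421}, so no new estimate is needed.
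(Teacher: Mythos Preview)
Your proposal is correct and matches the paper's intended derivation: the corollary is stated immediately after Theorem \ref{hab-t421} with no separate proof, so it is meant to be read exactly as you do, namely as the specialization $H:=D$ together with $D+D\subseteq D$. Your extra verification that $\varphi_{a-D,k}$ is finite-valued (via $0^+(a-D)=-D$ and Theorem \ref{theo-k-core}) is a clean way to align the two descriptions of $\Psi$, though the remark following Theorem \ref{hab-t421} already licenses replacing $F\cap\operatorname*{dom}\varphi_{a-H,k}$ by $F$ without this.
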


We will now characterize the efficient point set and the weakly efficient point set by minimal solutions of functionals
$\varphi _{a-D,k}$. The following two theorems deliver necessary conditions for weakly efficient and for efficient elements.

\begin{theorem}\label{hab-p431a}
Assume $k\in 0^+D$ and that $D$ is $(-k)$-directionally closed.
\begin{eqnarray*}
\operatorname*{Eff}(F,D) & = & \{y^0\in F\mid \forall y\in (F\cap\operatorname*{dom}\varphi _{y^0-D,k})\setminus\{y^0\}\colon\varphi _{y^0-D,k}(y)>0\}\\
 & = & \{y^0\in F\mid \forall y\in (F\cap\operatorname*{dom}\varphi _{y^0-D,k})\setminus\{y^0\}\colon\varphi _{-D,k}(y-y^0)>0\}.
\end{eqnarray*} 
If $0\in D$, then $\varphi _{y^0-D,k}(y^0)=\varphi _{-D,k}(y^0-y^0)\leq 0$ for each $y^0\in Y$.\\
If $D+\mathbb{R}_>k\subseteq\operatorname*{core}D$ and $0\in D\setminus\operatorname*{core}D$, then\\
$\varphi _{y^0-D,k}(y^0)=\varphi _{-D,k}(y^0-y^0)=0$ for each $y^0\in Y$ and\\
$\operatorname*{Eff}(F,D)  =  \{y^0\in F\mid  \forall y\in (F\cap\operatorname*{dom}\varphi _{y^0-D,k})\setminus\{y^0\}\colon \varphi _{y^0-D,k}(y^0) < \varphi _{y^0-D,k}(y)\},$\\
$\operatorname*{Eff}(F,D)  =  \{y^0\in F\mid  \forall y\in (F\cap\operatorname*{dom}\varphi _{y^0-D,k})\setminus\{y^0\}\colon$\\
$\varphi _{-D,k}(y^0-y^0) < \varphi _{-D,k}(y-y^0)\}.$
\end{theorem}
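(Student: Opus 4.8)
The plan is to reduce everything to the translated functional $\varphi_{-D,k}$ and then invoke the separation result of Proposition \ref{t-sep-all}(3). First I would record the two structural facts obtained by reflecting and translating $D$. Because the recession cone is translation invariant and $0^+(-D)=-0^+D$ (immediate from the definition of $0^+$), one has $0^+(y^0-D)=-0^+D$; hence the standing hypothesis $k\in 0^+D$ says exactly $k\in -0^+(y^0-D)$, and $k\neq 0$ by the global assumption. Next I would check that $y^0-D$ is $k$-directionally closed: if $z-t_nk\in y^0-D$ with $t_n\searrow 0$, then $w:=y^0-z$ satisfies $w+t_nk\in D$, so the $(-k)$-directional closedness of $D$ yields $w\in D$, i.e. $z\in y^0-D$. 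By Proposition \ref{A-shift} we have $\varphi_{y^0-D,k}(y)=\varphi_{-D,k}(y-y^0)$ and $\operatorname*{dom}\varphi_{y^0-D,k}=y^0+\operatorname*{dom}\varphi_{-D,k}$, which already gives the equivalence of the two stated forms of the characterizing set.

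For the main equivalence I would apply Proposition \ref{t-sep-all}(3) with $A:=y^0-D$ and the separated set taken to be $F\setminus\{y^0\}$ (if this is empty, then $F=\{y^0\}$ and $y^0\in\operatorname*{Eff}(F,D)$ holds trivially, while the right-hand condition is vacuous). By definition $y^0\in\operatorname*{Eff}(F,D)$ means $(y^0-D)\cap(F\setminus\{y^0\})=\emptyset$, i.e. $A\cap(F\setminus\{y^0\})=\emptyset$; the verifications above supply the hypotheses of Proposition \ref{t-sep-all}(3), so this is equivalent to $\varphi_{y^0-D,k}(y)\not\leq 0$ for every $y\in F\setminus\{y^0\}$. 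I would then rewrite $\not\leq 0$ in the form appearing in the statement: points $y\notin\operatorname*{dom}\varphi_{y^0-D,k}$ carry the symbolic value $\nu$ and automatically satisfy $\not\leq 0$, whereas for $y\in\operatorname*{dom}\varphi_{y^0-D,k}$ the value lies in $\overline{\mathbb{R}}$ and $\not\leq 0$ coincides with $>0$. This gives the first characterization, and the $\varphi_{-D,k}(y-y^0)$ version follows from the translation identity.

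The value at $y^0$ is then short to compute: $\varphi_{y^0-D,k}(y^0)=\varphi_{-D,k}(0)$. If $0\in D$, then $0\in -D=-D+0\cdot k$, so (\ref{A_in_leq}) gives $\varphi_{-D,k}(0)\leq 0$. Under the stronger hypotheses $D+\mathbb{R}_>k\subseteq\operatorname*{core}D$ and $0\in D\setminus\operatorname*{core}D$, I would observe that negating the inclusion yields $-D-\mathbb{R}_>k\subseteq\operatorname*{core}(-D)$, which is precisely condition (\ref{in_core}) for $A=-D$; Theorem \ref{varphi-theo-allg}(d) then furnishes (\ref{core_in_equal}) for $-D$, so $\varphi_{-D,k}(0)<0$ would force $0\in\operatorname*{core}(-D)=-\operatorname*{core}D$, i.e. $0\in\operatorname*{core}D$, contradicting $0\notin\operatorname*{core}D$. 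Hence $\varphi_{-D,k}(0)\geq 0$, and combined with the previous bound $\varphi_{-D,k}(0)=0$. Substituting $\varphi_{y^0-D,k}(y^0)=0$ converts the inequality $\varphi_{y^0-D,k}(y)>0$ from the first characterization into the strict forms $\varphi_{y^0-D,k}(y^0)<\varphi_{y^0-D,k}(y)$ and $\varphi_{-D,k}(y^0-y^0)<\varphi_{-D,k}(y-y^0)$, which are the last two displayed descriptions.

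The only genuine obstacle is the bookkeeping under reflection and translation in the first paragraph, namely confirming $0^+(y^0-D)=-0^+D$ and transferring $(-k)$-directional closedness of $D$ to $k$-directional closedness of $y^0-D$, since everything afterwards is a direct application of Proposition \ref{t-sep-all}(3), Proposition \ref{A-shift}, and the core identity in Theorem \ref{varphi-theo-allg}(d).
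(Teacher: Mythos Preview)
Your proposal is correct and follows essentially the same route as the paper. The paper's proof is terser: it simply invokes Theorem~\ref{varphi-theo-allg} (implicitly part~(c), i.e.\ the sublevel identity~(\ref{K4-ak}) for $A=y^0-D$ at level $t=0$) to obtain the equivalence $F\cap(y^0-D)\subseteq\{y^0\}\Leftrightarrow \varphi_{y^0-D,k}(y)>0$ on the domain, and then Theorem~\ref{varphi-theo-allg}(d) for the value at $y^0$. You do the same thing via Proposition~\ref{t-sep-all}(3), which is just the $t=0$ instance of~(\ref{K4-ak}), and you are more explicit than the paper about checking the two hypotheses (that $k\in -0^+(y^0-D)\setminus\{0\}$ and that $y^0-D$ is $k$-directionally closed) which the paper tacitly assumes transfer from $D$.
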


\begin{proof}
For the relationship between $\varphi _{-D,k}$ and $\varphi _{y^0-D,k}$, see Lemma \ref{A-shift}.\\
 We now apply the statements from Theorem \ref{varphi-theo-allg}.\\
Consider some arbitrary $y^0\in F$. $y^0-D\subseteq \operatorname*{dom}\varphi _{y^0-D,k}$.\\
$y^0\in\operatorname*{Eff}(F,D)\Leftrightarrow F\cap (y^0-D)\subseteq\{y^0\}\Leftrightarrow\varphi _{y^0-D,k}(y)>0\mbox{ for all } y\in (F\cap\operatorname*{dom}\varphi _{y^0-D,k})\setminus\{y^0\}$.\\ 
$0\in D\Rightarrow y^0\in y^0-D.\Rightarrow \varphi _{y^0-D,k}(y^0)\leq 0$.\\
$D+\mathbb{R}_>k\subseteq\operatorname*{core}D$ and $0\in D\setminus\operatorname*{core}D$ imply
$\varphi _{y^0-D,k}(y^0)=0$ by Theorem \ref{varphi-theo-allg} and thus the assertion.
\end{proof}

Because of Theorem \ref{t-K4-cons},  the functionals $\varphi _{y^0-D,k}$ in Theorem \ref{hab-p431a} are convex if and only if $D$ is a convex set, and the functional $\varphi _{-D,k}$ is sublinear if and only if $D$ is a convex cone. Note that $\varphi _{-D,k}$ and each $\varphi _{y^0-D,k}$ are finite-valued, if $k\in\operatorname*{core}0^+D$.

In Theorem \ref{hab-p431a}, efficient elements $y^0$ are described as unique minimizers of $\varphi _{y^0-D,k}$. Without the uniqueness, we get weakly efficient points.

\begin{theorem}\label{hab-p431b}
Assume $D+\mathbb{R}_>k\subseteq\operatorname*{core}D$. 
\begin{eqnarray*}
\operatorname*{WEff}(F,D) & = & \{y^0\in F\mid \forall y\in (F\cap\operatorname*{dom}\varphi _{y^0-D,k})\setminus\{y^0\}\colon \varphi _{y^0-D,k}(y)\geq 0\}\\
 & = & \{y^0\in F\mid \forall y\in (F\cap\operatorname*{dom}\varphi _{y^0-D,k})\setminus\{y^0\}\colon\varphi _{-D,k}(y-y^0)\geq 0\}.
\end{eqnarray*} 
Assume now, additionally, that $D$ is $(-k)$-directionally closed.\\
If $0\in D$, then $\varphi _{y^0-D,k}(y^0)=\varphi _{-D,k}(y^0-y^0)\leq 0$ for each $y^0\in Y$.\\
If $0\in D\setminus\operatorname*{core}D$, then $\varphi _{y^0-D,k}(y^0)=\varphi _{-D,k}(y^0-y^0)=0$ for each $y^0\in Y$ and
\begin{eqnarray*}
\operatorname*{WEff}(F,D) & = & \{y^0\in F\mid \varphi _{y^0-D,k}(y^0)=\operatorname*{min}_{y\in F\cap\operatorname*{dom}\varphi _{y^0-D,k}} \varphi _{y^0-D,k}(y)\}\\
 & = & \{y^0\in F\mid \varphi _{-D,k}(y^0-y^0)=\operatorname*{min}_{y\in F\cap\operatorname*{dom}\varphi _{y^0-D,k}} \varphi _{-D,k}(y-y^0)\}.
\end{eqnarray*} 
\end{theorem}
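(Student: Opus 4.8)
The plan is to reduce everything to the shifted functional $\varphi_{-D,k}$ and to the level-set identity furnished by Theorem \ref{varphi-theo-allg}(d). First I would set $A:=-D$ and observe that the standing hypothesis $D+\mathbb{R}_>k\subseteq\operatorname*{core}D$ is precisely condition (\ref{in_core}) for $A$, since $A-\mathbb{R}_>k=-(D+\mathbb{R}_>k)\subseteq-\operatorname*{core}D=\operatorname*{core}A$. Hence Theorem \ref{varphi-theo-allg}(d) applies and yields $\operatorname*{lev}_{\varphi_{-D,k},<}(t)=-\operatorname*{core}D+tk$ for every $t\in\mathbb{R}$; at $t=0$ this reads $\{z\in Y\mid\varphi_{-D,k}(z)<0\}=-\operatorname*{core}D$. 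Combined with Proposition \ref{A-shift}, which gives $\varphi_{y^0-D,k}(y)=\varphi_{-D,k}(y-y^0)$ and $\operatorname*{dom}\varphi_{y^0-D,k}=y^0+\operatorname*{dom}\varphi_{-D,k}$, this at once identifies the two sets on the right-hand side of the first characterization, so it suffices to argue with $\varphi_{y^0-D,k}$.

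Next I would prove the first characterization by contraposition against the definition $\operatorname*{WEff}(F,D)=\operatorname*{Eff}(F,\operatorname*{core}D)$. For $y^0\in F$ one has $y^0\notin\operatorname*{WEff}(F,D)$ iff there is some $y\in F\setminus\{y^0\}$ with $y\in y^0-\operatorname*{core}D$, i.e. $y-y^0\in-\operatorname*{core}D$, which by the level-set identity is exactly $\varphi_{y^0-D,k}(y)<0$. The one point needing care is the restriction of the quantifier to the effective domain: any $y$ with $\varphi_{y^0-D,k}(y)<0$ automatically lies in $\operatorname*{dom}\varphi_{y^0-D,k}$, so replacing $F\setminus\{y^0\}$ by $(F\cap\operatorname*{dom}\varphi_{y^0-D,k})\setminus\{y^0\}$ discards no dominating point. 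Thus $y^0\in\operatorname*{WEff}(F,D)$ iff no such $y$ exists, i.e. iff $\varphi_{y^0-D,k}(y)\geq0$ for all $y\in(F\cap\operatorname*{dom}\varphi_{y^0-D,k})\setminus\{y^0\}$, giving both displayed equalities of the first part.

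Then I would compute the value at the reference point. For $0\in D$ we have $0\in-D=-D+0\cdot k$, so (\ref{A_in_leq}) gives $\varphi_{-D,k}(0)\leq0$, which is the first value statement. For $0\in D\setminus\operatorname*{core}D$ I would additionally show $\varphi_{-D,k}(0)\geq0$: writing $\varphi_{-D,k}(0)=\inf\{t\in\mathbb{R}\mid tk\in D\}$, any $t<0$ with $tk\in D$ would force $0=tk+(-t)k\in D+\mathbb{R}_>k\subseteq\operatorname*{core}D$, contradicting $0\notin\operatorname*{core}D$; hence $\varphi_{-D,k}(0)=0$ (equivalently, $0\notin-\operatorname*{core}D$ via the level-set identity). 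The $(-k)$-directional closedness of $D$ is equivalent to $k$-directional closedness of $-D$, so Theorem \ref{varphi-theo-allg}(b) controls the equality level set $\operatorname*{lev}_{\varphi_{-D,k},=}(t)\subseteq-D+tk$; while the value computation itself does not require it, it is the hypothesis under which this exact value statement is cleanly stated and under which boundary pathologies are excluded.

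Finally, assuming $0\in D\setminus\operatorname*{core}D$, every $y^0\in Y$ satisfies $\varphi_{y^0-D,k}(y^0)=\varphi_{-D,k}(0)=0$ and $y^0\in\operatorname*{dom}\varphi_{y^0-D,k}$, so the equation $\varphi_{y^0-D,k}(y^0)=\operatorname*{min}_{y\in F\cap\operatorname*{dom}\varphi_{y^0-D,k}}\varphi_{y^0-D,k}(y)$ holds iff $\varphi_{y^0-D,k}(y)\geq0$ for all $y\in F\cap\operatorname*{dom}\varphi_{y^0-D,k}$; since the value at $y^0$ is $0$, this is the same condition as in the first part, so the min-characterization collapses onto $\operatorname*{WEff}(F,D)$, and the two $\varphi_{-D,k}(y-y^0)$ formulations are restatements via Proposition \ref{A-shift}. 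I expect the main obstacle to be the bookkeeping around the effective domain — verifying that dominating points are never lost when the quantifier is confined to $\operatorname*{dom}\varphi_{y^0-D,k}$, and that the infimum defining $\varphi_{-D,k}(0)$ equals exactly $0$ — rather than any deeper structural difficulty.
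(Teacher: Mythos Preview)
Your proposal is correct and follows essentially the same route as the paper's proof: invoke Proposition~\ref{A-shift} to pass between $\varphi_{y^0-D,k}$ and $\varphi_{-D,k}$, then use Theorem~\ref{varphi-theo-allg}(d) (the level-set identity $\operatorname*{lev}_{\varphi_{-D,k},<}(0)=-\operatorname*{core}D$ coming from $(-D)-\mathbb{R}_>k\subseteq\operatorname*{core}(-D)$) to translate $y\in y^0-\operatorname*{core}D$ into $\varphi_{y^0-D,k}(y)<0$, and finally read off the value $\varphi_{-D,k}(0)=0$ from $0\in D\setminus\operatorname*{core}D$. Your observation that the value computation $\varphi_{-D,k}(0)=0$ does not actually require $(-k)$-directional closedness is correct; the paper simply groups that statement under the additional hypothesis for convenience.
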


\begin{proof}
For the relation between $\varphi _{-D,k}$ and $\varphi _{y^0-D,k}$, see Lemma \ref{A-shift}.\\
Consider some arbitrary $y^0\in F$. $y^0-D\subseteq \operatorname*{dom}\varphi _{y^0-D,k}$.\\
$y^0\in\operatorname*{WEff}(F,D)\Leftrightarrow F\cap (y^0-\operatorname*{core}D)\subseteq\{y^0\}\Leftrightarrow\varphi _{y^0-D,k}(y)\geq 0\mbox{ for all } y\in (F\cap\operatorname*{dom}\varphi _{y^0-D,k})\setminus\{y^0\}$.\\ 
This and the statements about $\varphi _{y^0-D,k}(y^0)$ for a $(-k)$-directionally closed set $D$ follow from Theorem \ref{varphi-theo-allg}. This results in the assertions.
\end{proof}

Up to now, we have used functionals $\varphi _{y^0-D,k}$ for scalarizing the weakly efficient point set and the efficient point set, where $y^0$ was the (weakly) efficient element. We now turn to scalarization by functions $\varphi _{a-D,k}$, where $a$ is a fixed vector. In this case, $a$ can be a lower or an upper bound of $F$ and $D$ has to be a convex cone. In applications, an upper bound can easily be added to the vector optimization problem without any influence on the set of solutions. Note that scalarizations that are based on norms require a lower bound  of $F$. 

\begin{theorem}\label{hab-p432}
Suppose that $D$ is a non-trivial algebraically closed convex cone with $\operatorname*{core}D\not=\emptyset$.\\
If $F\subseteq a-\operatorname*{core}D$ or $F\subseteq a+\operatorname*{core}D$ for some $a\in Y$, then
$$ \operatorname*{WEff}(F,D)=\{y^0\in F\mid \exists k\in\operatorname*{core}D:\; \varphi_{a-D,k}(y^0)=\min_{y\in F}\varphi_{a-D,k}(y)\} \quad\mbox{ and }$$
$$ \operatorname*{Eff}(F,D)=\{y^0\in F\mid \exists k\in\operatorname*{core}D\;\forall y\in F\setminus\{y^0\}:\; \varphi_{a-D,k}(y^0)<\varphi_{a-D,k}(y)\}.$$
For $y^0\in \operatorname*{WEff}(F,D)$, in the case $F\subseteq a-\operatorname*{core}D$ one can choose $k=a-y^0$, which results in 
$\varphi_{a-D,k}(y^0)=-1$, whereas in the case $F\subseteq a+\operatorname*{core}D$ one can choose $k=y^0-a$, which results in 
$\varphi_{a-D,k}(y^0)=1$.
\end{theorem}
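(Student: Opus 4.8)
The plan is to fix $A:=a-D$ and first record the structural facts that drive everything. Since $D$ is a non-trivial algebraically closed convex cone, $A$ is a proper algebraically closed subset of $Y$ with $0^+A=-D$, so for every $k\in\operatorname*{core}D$ we have $k\in -\operatorname*{core}0^+A$. Theorem \ref{theo-k-core} then makes $\varphi_{a-D,k}$ finite-valued and yields the strict level-set identity (\ref{core_in_equal}), which here reads $\operatorname*{lev}_{\varphi_{a-D,k},<}(t)=a-\operatorname*{core}D+tk$ because $\operatorname*{core}A=a-\operatorname*{core}D$. Lemma \ref{l-dir-clsd-suff} shows $A$ is $k$-directionally closed, so Theorem \ref{varphi-theo-allg}(c) gives the non-strict identity (\ref{K4-ak}), i.e.\ $\operatorname*{lev}_{\varphi_{a-D,k},\le}(t)=a-D+tk$. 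Finally $A-D=A$ and Corollary \ref{t251M-recc} give that $\varphi_{a-D,k}$ is $D$-monotone and strictly $(\operatorname*{core}D)$-monotone. These are the only tools I will use.

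With them the inclusions ``$\supseteq$'' in both displayed equalities are immediate and require no boundedness of $F$: if some $k\in\operatorname*{core}D$ makes $y^0$ a minimizer of $\varphi_{a-D,k}$ over $F$, then $y^0\in\operatorname*{WEff}(F,D)$ by Corollary \ref{c-hab-t421}(a), and if $y^0$ is the \emph{unique} minimizer (the strict inequality in the efficient-set description), then $y^0\in\operatorname*{Eff}(F,D)$ by Corollary \ref{c-hab-t421}(c).

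The substance lies in the reverse inclusions, where the hypothesis $F\subseteq a-\operatorname*{core}D$ is used to manufacture $k$. Given $y^0\in\operatorname*{WEff}(F,D)$ I set $k:=a-y^0$, which lies in $\operatorname*{core}D$ by the bound on $F$. A short computation gives $\varphi_{a-D,k}(y^0)=-1$: one checks $\varphi_{a-D,k}(a)=0$ (here $-k\notin D$, since otherwise $k\in\operatorname*{core}D$ and $D+D\subseteq D$ would force $D=Y$), and then translation invariance (\ref{f-r255nn}) applied to $y^0=a-k$ lowers the value by $1$; this also proves the final sentence of the theorem. To see $y^0$ is a minimizer, suppose $\varphi_{a-D,k}(y)<-1$ for some $y\in F$. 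The strict identity (\ref{core_in_equal}) places $y\in a-\operatorname*{core}D-k=y^0-\operatorname*{core}D$, and $\varphi_{a-D,k}(y)<\varphi_{a-D,k}(y^0)$ forces $y\neq y^0$, contradicting $F\cap(y^0-\operatorname*{core}D)\subseteq\{y^0\}$. Hence $\varphi_{a-D,k}(y^0)=\min_{y\in F}\varphi_{a-D,k}(y)$.

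For the efficient set I take $y^0\in\operatorname*{Eff}(F,D)\subseteq\operatorname*{WEff}(F,D)$ and reuse the same $k$ and value $-1$. If some $y\in F\setminus\{y^0\}$ had $\varphi_{a-D,k}(y)\le-1$, the non-strict identity (\ref{K4-ak}) would put $y\in a-D-k=y^0-D$, contradicting $F\cap(y^0-D)\subseteq\{y^0\}$; thus $y^0$ is the strict minimizer demanded by the description. The case $F\subseteq a+\operatorname*{core}D$ is entirely symmetric, using $k:=y^0-a\in\operatorname*{core}D$, for which the same two identities give $\varphi_{a-D,k}(y^0)=1$ and the sublevel sets $y^0-\operatorname*{core}D$ and $y^0-D$ at level $1$. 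The main (indeed only non-routine) obstacle is recognizing this exact calibration $k=a-y^0$ (resp.\ $y^0-a$): it is precisely this choice that shifts the distinguished sublevel sets of $\varphi_{a-D,k}$ so that $\{y:\varphi_{a-D,k}(y)<\varphi_{a-D,k}(y^0)\}$ and $\{y:\varphi_{a-D,k}(y)\le\varphi_{a-D,k}(y^0)\}$ become exactly $y^0-\operatorname*{core}D$ and $y^0-D$, converting (weak) efficiency into (strict) minimality; everything else is bookkeeping with the identities gathered in the first paragraph.
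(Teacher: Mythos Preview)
Your proof is correct and follows essentially the same approach as the paper: you invoke Corollary~\ref{c-hab-t421} for the inclusions ``$\supseteq$'' and, for ``$\subseteq$'', choose $k=a-y^0$ (respectively $k=y^0-a$) and exploit the level-set identities (\ref{K4-ak}) and (\ref{core_in_equal}) to translate (weak) efficiency of $y^0$ into (strict) minimality of $\varphi_{a-D,k}$ at $y^0$. The only cosmetic difference is that you obtain $\varphi_{a-D,k}(y^0)=-1$ via $\varphi_{a-D,k}(a)=0$ and translation invariance~(\ref{f-r255nn}), whereas the paper reads it off from the characterization $\varphi_{a-D,k}(y)=-1\iff y\in y^0-(D\setminus\operatorname*{core}D)$ together with $0\in D\setminus\operatorname*{core}D$.
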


\begin{proof}
Because of Corollary \ref{c-hab-t421}, we have only to show the inclusions $\subseteq$ of the equations.\\
Assume $F\subseteq a-\operatorname*{core}D$ and consider some $y^0\in \operatorname*{WEff}(F,D)$.\\
$\Rightarrow y^0\in F\subseteq a-\operatorname*{core}D$. $\Rightarrow k:=a-y^0\in \operatorname*{core}D$.\\
$\varphi_{a-D,k}(y)=-1\Leftrightarrow y\in a-(D\setminus\operatorname*{core}D)-k=y^0-(D\setminus\operatorname*{core}D).$
Thus $0\in D\setminus\operatorname*{core}D$ implies $\varphi_{a-D,k}(y^0)=-1$.\\
$\varphi_{a-D,k}(y)<-1\Leftrightarrow y\in a-\operatorname*{core}D-k=y^0-\operatorname*{core}D.$\\
$F\cap (y^0-\operatorname*{core}D)\subseteq\{y^0\}$ implies $\varphi_{a-D,k}(y)\not< -1\mbox{ for all } y\in F\setminus\{y^0\}$, thus
$\varphi_{a-D,k}(y^0)=\min_{y\in F}\varphi_{a-D,k}(y)$.\\
For $y^0\in \operatorname*{Eff}(F,D)$, $F\cap (y^0-D)=\{y^0\}$ and hence $\varphi_{a-D,k}(y)\not\leq -1\mbox{ for all } y\in F\setminus\{y^0\}$.\\
The case $F\subseteq a+\operatorname*{core}D$ can be handled in an analogous way. 
\end{proof}
\smallskip

\section{Scalarization in Vector Optimization by Norms}\label{s-scal-norm}

Because of Proposition \ref{p-ordint-varphi}, Section \ref{s-scal-uni} delivers scalarization results for efficient and weakly efficient elements by norms. 

Let us first give some sufficient conditions for efficient and weakly efficient points by minimal solutions of norms. 
Theorem \ref{hab-t421} implies with Proposition \ref{p-ordint-varphi}:

\begin{theorem}\label{n-hab-t421}
Suppose $C$ is a non-trivial algebraically closed convex pointed cone in $Y$ with
$k\in \operatorname*{core}C$ and $F\subseteq a+C$.
Define
$$\Psi := \operatorname*{argmin}_{y\in F} \| y-a\| _{C,k}.$$ 
Then:
\begin{itemize}
\item[(a)] $\operatorname*{Eff}(F,D)\cap \Psi\subseteq \operatorname*{Eff}(\Psi,D)$.
\item[(b)] $C+D\subseteq C\implies  \operatorname*{Eff}(F,D)\cap \Psi= \operatorname*{Eff}(\Psi,D)$.
\item[(c)] $C+D\subseteq C$ and $\Psi=\{ y^0\}$ imply $y^0\in\operatorname*{Eff}(F,D)$.
\item[(d)] $C+(D\setminus\{0\})\subseteq \operatorname*{core}C \implies\Psi\subseteq \operatorname*{Eff}(F,D)$.
\item[(e)] $C+\operatorname*{core}D\subseteq \operatorname*{core}C \implies  \Psi\subseteq \operatorname*{WEff}(F,D)$.
\end{itemize}
\end{theorem}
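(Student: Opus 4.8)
The plan is to deduce this theorem directly from Theorem \ref{hab-t421} applied with $H = C$, using Proposition \ref{p-ordint-varphi} to identify the norm-based $\Psi$ with the $\varphi_{a-C,k}$-based one. Since $C$ is a non-trivial cone it is a proper subset of $Y$ with $C\neq\{0\}$, so the standing hypotheses of Section \ref{s-scal-uni} are met for $H=C$. First I would observe that, because $F\subseteq a+C$, Proposition \ref{p-ordint-varphi} gives $\|y-a\|_{C,k}=\varphi_{a-C,k}(y)$ for every $y\in F$; as the left side is a finite norm value, this also shows $F\subseteq\operatorname*{dom}\varphi_{a-C,k}$, whence $F\cap\operatorname*{dom}\varphi_{a-C,k}=F$ and the two functions agree throughout this set. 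Consequently the set $\Psi=\operatorname*{argmin}_{y\in F}\|y-a\|_{C,k}$ coincides with $\operatorname*{argmin}_{y\in F\cap\operatorname*{dom}\varphi_{a-C,k}}\varphi_{a-C,k}(y)$, i.e.\ with the object $\Psi$ of Theorem \ref{hab-t421} for $H=C$.

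With this identification, parts (a), (b) and (c) follow verbatim from Theorem \ref{hab-t421}(a), (b), (c) with $H=C$: note that $C+D\subseteq C$ is exactly the hypothesis needed in (b) and (c). The only point requiring extra care is that parts (d) and (e) of Theorem \ref{hab-t421} additionally assume that $H$ is $(-k)$-directionally closed, an assumption absent from the present statement. The key step is therefore to verify that this directional closedness holds automatically here. Since $C$ is a convex cone we have $0^+C=C$, and because $k\in\operatorname*{core}C\subseteq C$ with $k\neq 0$ we get $k\in 0^+C\setminus\{0\}$, i.e.\ $-k\in -0^+C\setminus\{0\}$. As $C$ is algebraically closed, Lemma \ref{l-dir-clsd-suff} (applied in the direction $-k$) yields that $C$ is $(-k)$-directionally closed.

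Having secured this, part (d) follows from Theorem \ref{hab-t421}(d), its two hypotheses now being $C+(D\setminus\{0\})\subseteq\operatorname*{core}C$ together with the just-established directional closedness, and part (e) follows from the second alternative in Theorem \ref{hab-t421}(e), whose hypotheses reduce to $C+\operatorname*{core}D\subseteq\operatorname*{core}C$ together with the directional closedness of $C$. I expect the main obstacle to be precisely this bookkeeping around the directional-closedness assumption: one must recognize that it is free of charge under $k\in\operatorname*{core}C$ and algebraic closedness of $C$, via $0^+C=C$ and Lemma \ref{l-dir-clsd-suff}. Everything else is a transparent specialization of Theorem \ref{hab-t421} enabled by Proposition \ref{p-ordint-varphi}.
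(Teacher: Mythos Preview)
Your proposal is correct and follows exactly the route the paper indicates: the paper's proof is the single line ``Theorem \ref{hab-t421} implies with Proposition \ref{p-ordint-varphi},'' and you have spelled out precisely this reduction, including the one nontrivial verification (that $C$ is $(-k)$-directionally closed, via $0^+C=C$ and Lemma \ref{l-dir-clsd-suff}, or equivalently Lemma \ref{cone-alg-clsd}) needed to invoke parts (d) and (e).
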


\begin{corollary}\label{n-c-hab-t421}
Suppose $D$ is a non-trivial algebraically closed convex pointed cone with
$k\in \operatorname*{core}D$ and $F\subseteq a+D$.
Define
$$\Psi := \operatorname*{argmin}_{y\in F} \| y-a\| _{D,k}.$$
Then:
\begin{itemize}
\item[(a)] $\Psi\subseteq \operatorname*{WEff}(F,D)$.
\item[(b)] $\operatorname*{Eff}(F,D)\cap \Psi= \operatorname*{Eff}(\Psi,D)$.
\item[(c)] $\Psi=\{ y^0\}$ implies $y^0\in\operatorname*{Eff}(F,D)$.
\end{itemize}
\end{corollary}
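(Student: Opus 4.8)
The plan is to obtain the three assertions as the special case $C=D$ of Theorem~\ref{n-hab-t421}. Every structural hypothesis that theorem imposes on $C$—being a non-trivial algebraically closed convex pointed cone, having $k\in\operatorname*{core}C$, and satisfying $F\subseteq a+C$—becomes, under the substitution $C=D$, precisely the hypothesis assumed here on $D$. In particular the norm $\|\cdot\|_{D,k}$ and the set $\Psi$ of the corollary coincide with $\|\cdot\|_{C,k}$ and $\Psi$ of the theorem, so it only remains to check that the additional cone inclusions required in parts (b)--(e) of Theorem~\ref{n-hab-t421} hold automatically once $C=D$.

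First I would record the one algebraic fact that drives everything: since $D$ is a convex cone it is closed under addition, so $D+D\subseteq D$. Reading Theorem~\ref{n-hab-t421}(b) and (c) with $C=D$, the hypothesis $C+D\subseteq C$ is exactly $D+D\subseteq D$ and is therefore met, which yields parts (b) and (c) of the corollary directly. For part (a) I would invoke Theorem~\ref{n-hab-t421}(e), whose hypothesis $C+\operatorname*{core}D\subseteq\operatorname*{core}C$ reads $D+\operatorname*{core}D\subseteq\operatorname*{core}D$. To verify this, take $d\in D$ and $c\in\operatorname*{core}D$; for an arbitrary direction $v\in Y$ the definition of the algebraic interior supplies some $\delta>0$ with $c+tv\in D$ for all $t\in[0,\delta]$, whence $c+d+tv=(c+tv)+d\in D+D\subseteq D$ for those $t$. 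As $v$ was arbitrary, $c+d\in\operatorname*{core}D$, so $D+\operatorname*{core}D\subseteq\operatorname*{core}D$ and Theorem~\ref{n-hab-t421}(e) applies, giving $\Psi\subseteq\operatorname*{WEff}(F,D)$. Note that one cannot instead use Theorem~\ref{n-hab-t421}(d) to strengthen (a) to efficiency, since $D+(D\setminus\{0\})\subseteq\operatorname*{core}D$ generally fails (a boundary ray plus another boundary ray can stay on the boundary).

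There is essentially no obstacle here: the corollary is a clean specialization, and the only work is the verification of the two cone inclusions, both of which reduce to closure of the convex cone $D$ under addition. The single place deserving a line of care is the inclusion $D+\operatorname*{core}D\subseteq\operatorname*{core}D$, since it mixes the cone with its algebraic interior; I would phrase it through the one-sided definition of $\operatorname*{core}$ as above, consistent with the purely algebraic (topology-free) framework used throughout the paper, rather than appealing to any topological interior.
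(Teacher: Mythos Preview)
Your proposal is correct and is exactly the intended derivation: the paper states the corollary without proof immediately after Theorem~\ref{n-hab-t421}, so specializing $C=D$ there and checking $D+D\subseteq D$ and $D+\operatorname*{core}D\subseteq\operatorname*{core}D$ is precisely what is expected. Your verification of the second inclusion via the definition of the algebraic interior is fine and matches the topology-free framework of the paper.
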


We will now characterize the efficient point set and the weakly efficient point set by minimal solutions of norms. 
We get from Theorem \ref{hab-p432} by Proposition \ref{p-ordint-varphi}:

\begin{theorem}\label{n-hab-p432}
Suppose that $D$ is a non-trivial algebraically closed convex pointed cone and
$F\subseteq a+\operatorname*{core}D$. Then
$$ \operatorname*{WEff}(F,D)=\{y^0\in F\mid \exists k\in\operatorname*{core}D:\; \| y^0-a\| _{D,k}=\operatorname*{min}_{y\in F}\| y-a\| _{D,k}\} \quad\mbox{ and }$$
$$ \operatorname*{Eff}(F,D)=\{y^0\in F\mid \exists k\in\operatorname*{core}D\;\forall y\in F\setminus\{y^0\}:\; \| y^0-a\| _{D,k}<\| y-a\| _{D,k}\}.$$
For $y^0\in \operatorname*{WEff}(F,D)$, one can choose $k=y^0-a$, which results in 
$\| y^0-a\| _{D,k}=1$.
\end{theorem}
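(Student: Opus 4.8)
The plan is to obtain Theorem~\ref{n-hab-p432} as a direct specialization of Theorem~\ref{hab-p432}, using Proposition~\ref{p-ordint-varphi} to replace the scalarizing functional $\varphi_{a-D,k}$ by the order unit norm $\|\cdot - a\|_{D,k}$.

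First I would observe that the hypotheses are exactly those needed to invoke Theorem~\ref{hab-p432} in its second case $F\subseteq a+\operatorname*{core}D$: the set $D$ is a non-trivial algebraically closed convex pointed cone, hence in particular a convex cone, and $\operatorname*{core}D\neq\emptyset$ since the nonempty set $F$ is contained in $a+\operatorname*{core}D$. Thus Theorem~\ref{hab-p432} yields the two characterizations of $\operatorname*{WEff}(F,D)$ and $\operatorname*{Eff}(F,D)$ with $\varphi_{a-D,k}$ in place of the norm, together with the assertion that for $y^0\in\operatorname*{WEff}(F,D)$ the choice $k=y^0-a\in\operatorname*{core}D$ gives $\varphi_{a-D,k}(y^0)=1$.

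The key step is to pass from $\varphi_{a-D,k}$ to $\|\cdot - a\|_{D,k}$. Since $D$ is a non-trivial algebraically closed convex pointed cone and $k\in\operatorname*{core}D$, Proposition~\ref{p-ordint-varphi} (applied with $C=D$) gives $\|y-a\|_{D,k}=\varphi_{a-D,k}(y)$ for every $y\in a+D$. Because $F\subseteq a+\operatorname*{core}D\subseteq a+D$, this identity holds at every $y\in F$; consequently the minimization problems $\operatorname*{min}_{y\in F}\varphi_{a-D,k}(y)$ and $\operatorname*{min}_{y\in F}\|y-a\|_{D,k}$ coincide, and each strict inequality $\varphi_{a-D,k}(y^0)<\varphi_{a-D,k}(y)$ translates verbatim into $\|y^0-a\|_{D,k}<\|y-a\|_{D,k}$. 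Substituting these identities into the two equalities furnished by Theorem~\ref{hab-p432} produces exactly the claimed descriptions of $\operatorname*{WEff}(F,D)$ and $\operatorname*{Eff}(F,D)$, and the normalization $\|y^0-a\|_{D,k}=1$ for $k=y^0-a$ follows from $\varphi_{a-D,k}(y^0)=1$ together with the same pointwise identity (noting $y^0\in F\subseteq a+D$).

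The argument is essentially bookkeeping, so I do not expect a serious obstacle; the one point requiring care is the domain restriction. Proposition~\ref{p-ordint-varphi} only guarantees $\|y-a\|_{D,k}=\varphi_{a-D,k}(y)$ on $a+D$, and not on the whole effective domain of $\varphi_{a-D,k}$, so I would make sure the optimization in Theorem~\ref{hab-p432} is genuinely restricted to $F$ (which it is) and that $F\subseteq a+D$ (which holds by hypothesis). Once this is verified, the substitution is legitimate at every evaluated point and the theorem follows.
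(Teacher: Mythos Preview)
Your proposal is correct and matches the paper's own argument exactly: the paper simply states that Theorem~\ref{n-hab-p432} follows from Theorem~\ref{hab-p432} via Proposition~\ref{p-ordint-varphi}, and you have spelled out precisely the substitution and domain check (namely $F\subseteq a+D$) that makes this work.
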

\smallskip

\def\cfac#1{\ifmmode\setbox7\hbox{$\accent"5E#1$}\else
  \setbox7\hbox{\accent"5E#1}\penalty 10000\relax\fi\raise 1\ht7
  \hbox{\lower1.15ex\hbox to 1\wd7{\hss\accent"13\hss}}\penalty 10000
  \hskip-1\wd7\penalty 10000\box7}
  \def\cfac#1{\ifmmode\setbox7\hbox{$\accent"5E#1$}\else
  \setbox7\hbox{\accent"5E#1}\penalty 10000\relax\fi\raise 1\ht7
  \hbox{\lower1.15ex\hbox to 1\wd7{\hss\accent"13\hss}}\penalty 10000
  \hskip-1\wd7\penalty 10000\box7}
  \def\cfac#1{\ifmmode\setbox7\hbox{$\accent"5E#1$}\else
  \setbox7\hbox{\accent"5E#1}\penalty 10000\relax\fi\raise 1\ht7
  \hbox{\lower1.15ex\hbox to 1\wd7{\hss\accent"13\hss}}\penalty 10000
  \hskip-1\wd7\penalty 10000\box7}
  \def\cfac#1{\ifmmode\setbox7\hbox{$\accent"5E#1$}\else
  \setbox7\hbox{\accent"5E#1}\penalty 10000\relax\fi\raise 1\ht7
  \hbox{\lower1.15ex\hbox to 1\wd7{\hss\accent"13\hss}}\penalty 10000
  \hskip-1\wd7\penalty 10000\box7}
  \def\cfac#1{\ifmmode\setbox7\hbox{$\accent"5E#1$}\else
  \setbox7\hbox{\accent"5E#1}\penalty 10000\relax\fi\raise 1\ht7
  \hbox{\lower1.15ex\hbox to 1\wd7{\hss\accent"13\hss}}\penalty 10000
  \hskip-1\wd7\penalty 10000\box7}
  \def\cfac#1{\ifmmode\setbox7\hbox{$\accent"5E#1$}\else
  \setbox7\hbox{\accent"5E#1}\penalty 10000\relax\fi\raise 1\ht7
  \hbox{\lower1.15ex\hbox to 1\wd7{\hss\accent"13\hss}}\penalty 10000
  \hskip-1\wd7\penalty 10000\box7}
  \def\cfac#1{\ifmmode\setbox7\hbox{$\accent"5E#1$}\else
  \setbox7\hbox{\accent"5E#1}\penalty 10000\relax\fi\raise 1\ht7
  \hbox{\lower1.15ex\hbox to 1\wd7{\hss\accent"13\hss}}\penalty 10000
  \hskip-1\wd7\penalty 10000\box7}
  \def\cfac#1{\ifmmode\setbox7\hbox{$\accent"5E#1$}\else
  \setbox7\hbox{\accent"5E#1}\penalty 10000\relax\fi\raise 1\ht7
  \hbox{\lower1.15ex\hbox to 1\wd7{\hss\accent"13\hss}}\penalty 10000
  \hskip-1\wd7\penalty 10000\box7}
  \def\cfac#1{\ifmmode\setbox7\hbox{$\accent"5E#1$}\else
  \setbox7\hbox{\accent"5E#1}\penalty 10000\relax\fi\raise 1\ht7
  \hbox{\lower1.15ex\hbox to 1\wd7{\hss\accent"13\hss}}\penalty 10000
  \hskip-1\wd7\penalty 10000\box7} \def\Dbar{\leavevmode\lower.6ex\hbox to
  0pt{\hskip-.23ex \accent"16\hss}D}
  \def\cfac#1{\ifmmode\setbox7\hbox{$\accent"5E#1$}\else
  \setbox7\hbox{\accent"5E#1}\penalty 10000\relax\fi\raise 1\ht7
  \hbox{\lower1.15ex\hbox to 1\wd7{\hss\accent"13\hss}}\penalty 10000
  \hskip-1\wd7\penalty 10000\box7} \def\cprime{$'$}
  \def\Dbar{\leavevmode\lower.6ex\hbox to 0pt{\hskip-.23ex \accent"16\hss}D}
  \def\cfac#1{\ifmmode\setbox7\hbox{$\accent"5E#1$}\else
  \setbox7\hbox{\accent"5E#1}\penalty 10000\relax\fi\raise 1\ht7
  \hbox{\lower1.15ex\hbox to 1\wd7{\hss\accent"13\hss}}\penalty 10000
  \hskip-1\wd7\penalty 10000\box7} \def\cprime{$'$}
  \def\Dbar{\leavevmode\lower.6ex\hbox to 0pt{\hskip-.23ex \accent"16\hss}D}
  \def\cfac#1{\ifmmode\setbox7\hbox{$\accent"5E#1$}\else
  \setbox7\hbox{\accent"5E#1}\penalty 10000\relax\fi\raise 1\ht7
  \hbox{\lower1.15ex\hbox to 1\wd7{\hss\accent"13\hss}}\penalty 10000
  \hskip-1\wd7\penalty 10000\box7}
  \def\udot#1{\ifmmode\oalign{$#1$\crcr\hidewidth.\hidewidth
  }\else\oalign{#1\crcr\hidewidth.\hidewidth}\fi}
  \def\cfac#1{\ifmmode\setbox7\hbox{$\accent"5E#1$}\else
  \setbox7\hbox{\accent"5E#1}\penalty 10000\relax\fi\raise 1\ht7
  \hbox{\lower1.15ex\hbox to 1\wd7{\hss\accent"13\hss}}\penalty 10000
  \hskip-1\wd7\penalty 10000\box7} \def\Dbar{\leavevmode\lower.6ex\hbox to
  0pt{\hskip-.23ex \accent"16\hss}D}
  \def\cfac#1{\ifmmode\setbox7\hbox{$\accent"5E#1$}\else
  \setbox7\hbox{\accent"5E#1}\penalty 10000\relax\fi\raise 1\ht7
  \hbox{\lower1.15ex\hbox to 1\wd7{\hss\accent"13\hss}}\penalty 10000
  \hskip-1\wd7\penalty 10000\box7} \def\Dbar{\leavevmode\lower.6ex\hbox to
  0pt{\hskip-.23ex \accent"16\hss}D}
  \def\cfac#1{\ifmmode\setbox7\hbox{$\accent"5E#1$}\else
  \setbox7\hbox{\accent"5E#1}\penalty 10000\relax\fi\raise 1\ht7
  \hbox{\lower1.15ex\hbox to 1\wd7{\hss\accent"13\hss}}\penalty 10000
  \hskip-1\wd7\penalty 10000\box7} \def\Dbar{\leavevmode\lower.6ex\hbox to
  0pt{\hskip-.23ex \accent"16\hss}D}
  \def\cfac#1{\ifmmode\setbox7\hbox{$\accent"5E#1$}\else
  \setbox7\hbox{\accent"5E#1}\penalty 10000\relax\fi\raise 1\ht7
  \hbox{\lower1.15ex\hbox to 1\wd7{\hss\accent"13\hss}}\penalty 10000
  \hskip-1\wd7\penalty 10000\box7}

\end{document}